\newtheorem{theorem}{Theorem}
\newtheorem{corollary}[theorem]{Corollary}
\newtheorem{definition}[theorem]{Definition}
\newtheorem{example}[theorem]{Example}
\newtheorem{lemma}[theorem]{Lemma}
\newtheorem{proposition}[theorem]{Proposition}
\newtheorem{remark}[theorem]{Remark}
\newenvironment{proof}[1][Proof]{\noindent\textbf{#1.} }{\ \rule{0.5em}{0.5em}}
 \font\tenmsb=msbm10 
\begin{document}

\title{New results on $q$-positivity}
\author{Y.\ Garc\'{\i}a Ramos$^{1}$, J.E. Mart\'{\i}nez-Legaz$^{2}$\thanks{%
This author has been supported by the MICINN of Spain, grant
MTM2008-06695-C03-03, by the Barcelona Graduate School of Economics and by
the Government of Catalonia. He is affiliated to MOVE (Markets,
Organizations and Votes in Economics).}, S. Simons$^{3}$ \\
$^{1}$Centro de Investigaci\'{o}n de la Universidad del Pac\'{\i}fico, Lima,
Peru\\
$^{2}$Universitat Aut\`{o}noma de Barcelona, Spain\\
$^{3}$University of California, Santa Barbara, USA}
\date{}
\maketitle

\begin{abstract}
In this paper we discuss symmetrically self-dual spaces, which are simply
real vector spaces with a symmetric bilinear form. Certain subsets of the
space will be called $q$-positive, where $q$ is the quadratic form induced
by the original bilinear form. The notion of $q$-positivity generalizes the
classical notion of the monotonicity of a subset of a product of a Banach
space and its dual. Maximal $q$-positivity then generalizes maximal
monotonicity. We discuss concepts generalizing the representations of
monotone sets by convex functions, as well as the number of maximally $q$%
-positive extensions of a $q$-positive set. We also discuss symmetrically
self-dual Banach spaces, in which we add a Banach space structure, giving
new characterizations of maximal $q$-positivity. The paper finishes with two
new examples.
\end{abstract}

\section{Introduction}

In this paper we discuss symmetrically self-dual spaces, which are simply
real vector spaces with a symmetric bilinear form. Certain subsets of the
space will be called $q$-positive, where $q$ is the quadratic form induced
by the original bilinear form. The notion of $q$-positivity generalizes the
classical notion of the monotonicity of a subset of a product of a Banach
space and its dual. Maximal $q$-positivity then generalizes maximal
monotonicity.

A modern tool in the theory of monotone operators is the representation of
monotone sets by convex functions. We extend this tool to the setting of $q$%
-positive sets. We discuss the notion of the intrinsic conjugate of a proper
convex function on an SSD\ space. To each nonempty subset of an SSD space,
we associate a convex function, which generalizes the function originally
introduced by Fitzpatrick for the monotone case in \cite{Fi88}. In the same
paper he posed a problem on convex representations of monotone sets, to
which we give a partial solution in the more general context of this paper.

We prove that maximally $q$-positive convex sets are always affine, thus
extending a previous result in the theory of monotone operators \cite{BWY08,
MAS09}.

We discuss the number of maximally $q$-positive extensions of a $q$-positive
set. We show that either there are an infinite number of such extensions or
a unique extension, and in the case when this extension is unique we
characterize it. As a consequence of this characterization, we obtain a
sufficient condition for a monotone set to have a unique maximal monotone
extension to the bidual.

We then discuss symmetrically self-dual Banach spaces, in which we add a
Banach space structure to the bilinear structure already considered. In the
Banach space case, this corresponds to considering monotone subsets of the
product of a reflexive Banach space and its dual. We give new
characterizations of maximally $q$-positive sets, and of minimal convex
functions bounded below by $q.$

We give two examples of $q$-positivity: Lipschitz mappings between Hilbert
spaces, and closed sets in a Hilbert space.

\section{Preliminaries}

We will work in the setting of symmetrically self-dual spaces, a notion
introduced in \cite{Si08}. A \textit{symmetrically self-dual (SSD) space} is
a pair $(B,\lfloor \cdot ,\cdot \rfloor )$ consisting of a nonzero real
vector space $B$ and a symmetric bilinear form $\lfloor \cdot ,\cdot \rfloor
:B\times B\rightarrow \hbox{\tenmsb R}$. The bilinear form $\lfloor \cdot
,\cdot \rfloor $ induces the quadratic form $q$ on $B$ defined by $q(b)=%
\frac{1}{2}\lfloor b,b\rfloor $. A nonempty set $A\subset B$ is called $q$-%
\textit{positive} \cite[Definition 19.5]{Si08} if $b,c\in A\Rightarrow
q(b-c)\geq 0$. A set $M\subset B$ is called \textit{maximally }$q$-positive 
\cite[Definition 20.1]{Si08} if it is $q$-positive and not properly
contained in any other $q$-positive set. Equivalently, a $q$-positive set $A$
is maximally\textit{\ }$q$-positive if every $b\in B$ which is $q$-\textit{%
positively related} to A (i.e. $q(b-a)\geq 0$ for every $a\in A$) belongs to 
$A$. The set of all elements of $B$ that are $q$-positively related to $A$
will be denoted by $A^{\pi }$. The closure of $A$ with respect to the
(possibly non Hausdorff) weak topology $w\left( B,B\right) $ will be denoted
by $A^{w}.$

Given an arbitrary nonempty set $A\subset B$, the function $\Phi
_{A}:B\rightarrow \hbox{\tenmsb R}\cup \{+\infty \}$ is defined by 
\begin{equation*}
\Phi _{A}(x):=q(x)-\inf_{a\in A}q(x-a)=\sup_{a\in A}\{\lfloor x,a\rfloor
-q(a)\}.
\end{equation*}%
This generalizes the \textit{Fitzpatrick function} from the theory of
monotone operators. It is easy to see that $\Phi _{A}$ is a proper $w\left(
B,B\right) $-lsc convex function. If $M$ is maximally $q$-positive then 
\begin{equation}
\Phi _{M}(b)\geq q(b),\quad \forall \;b\in B,  \label{equation1}
\end{equation}%
and 
\begin{equation}
\Phi _{M}(b)=q(b)\Leftrightarrow b\in M.  \label{equation2}
\end{equation}%
A useful characterization of $A^{\pi }$ is the following: 
\begin{equation}
b\in A^{\pi }\text{ if and only if }\Phi _{A}(b)\leq q(b).  \label{mu}
\end{equation}%
The set of all proper convex functions $f:B\rightarrow \hbox{\tenmsb R}\cup
\{+\infty \}$ satisfying $f\geq q$ on $B$ will be denoted by $\mathcal{PC}%
_{q}(B)$ and, if $f\in \mathcal{PC}_{q}(B),$

\begin{equation}
\mathcal{P}_{q}(f):=\{{b\in B:f(b)=q(b)}\}{.}  \label{equation3}
\end{equation}

We will say that the convex function $f:B\rightarrow \hbox{\tenmsb R}\cup
\{+\infty \}$ is a $q$-\textit{representation} of a nonempty set $A\subset B$
if $f\in \mathcal{PC}_{q}(B)$ and $\mathcal{P}_{q}(f)=A$. In particular, if $%
A\subset B$ admits a $q$-representation, then it is $q$-positive \cite[Lemma
19.8]{Si08}. The converse is not true in general, see for example \cite[%
Remark 6.6]{Si08}.

A $q$-positive set in an SSD\ space having a $w\left( B,B\right) $-lsc $q$%
-representation will be called $q$-representable ($q$-representability is
identical with $\mathcal{S}$-$q$-positivity as defined in \cite[Def. 6.2]%
{Si07} in a more restrictive situation). By (\ref{equation1}) and (\ref%
{equation2}), every maximally $q$-positive set is $q$-representable.

If $B$ is a Banach space, we will denote by $\langle \cdot ,\cdot \rangle $
the duality products between $B$ and $B^{\ast }$ and between $B^{\ast }$ and
the bidual space $B^{\ast \ast }$, and the norm in $B^{\ast }$ will be
denoted by $\Vert \cdot \Vert $ as well. The topological closure, the
interior and the convex hull of a set $A\subset B$ will be denoted
respectively by $\overline{A}$, $intA$ and $convA$. The indicator function $%
\delta _{A}:B\rightarrow \hbox{\tenmsb R}\cup \{+\infty \}$ of $A\subset B$
is defined by%
\begin{equation*}
\delta _{A}\left( x\right) :=\left\{ 
\begin{array}{cc}
0 & \quad \text{if }x\in A \\ 
+\infty & \quad \text{if }x\notin A%
\end{array}%
.\right.
\end{equation*}%
The convex envelope of $f:B\rightarrow \hbox{\tenmsb R}\cup \{+\infty \}$
will be denoted by $conv~f$.

\section{SSD spaces}

Following the notation of \cite{JE08}, for a proper convex function $%
f:B\rightarrow \hbox{\tenmsb R}\cup \{+\infty \}$, we will consider its
intrinsic (Fenchel) conjugate $f^{@}:B\rightarrow \hbox{\tenmsb R}\cup
\{+\infty \}$ with respect to the pairing $\lfloor \cdot ,\cdot \rfloor :$%
\begin{equation*}
f^{@}(b):=\sup \{\lfloor c,b\rfloor -f(c):c\in B\}.
\end{equation*}

\begin{proposition}[\protect\cite{Si08,JE08}]
\label{prop0}Let $A$ be a $q$-positive subset of an SSD\ space $B$. The
following statements hold:

(1) For every $b \in B$, $\Phi_A(b) \leq \Phi^@_A(b)$ and $q(b) \leq
\Phi^@_A(b)$;

(2) For every $a \in A$, $\Phi_A(a) = q(a) = \Phi^@_A(a)$;


(3) $\Phi _{A}^{@}$ is the largest $w\left( B,B\right) $-lsc convex function
majorized by $q$ on $A$;

(4) $A$ is $q$-representable if, and only if, $\mathcal{P}_{q}(\Phi
_{A}^{@})\subset A;$

(5) $A$ is $q$-representable if, and only if, for all $b\in B$ such that,
for all $c\in B,$ $\lfloor c,b\rfloor \leq \Phi _{A}(c)+q\left( b\right) ,$
one has $b\in A.$
\end{proposition}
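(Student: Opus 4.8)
The plan is to prove the five statements more or less in the order listed, since each one builds on the previous, and to isolate at the start the two facts that do all the work: the definition $\Phi_A(x)=\sup_{a\in A}\{\lfloor x,a\rfloor-q(a)\}$ together with the characterization (\ref{mu}).

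First I would prove (1). For the inequality $q(b)\le\Phi_A^@(b)$: since $A\ne\emptyset$, pick any $a\in A$; because $A$ is $q$-positive, $q(b-a)\ge 0$ for $b\in A$, but more usefully I would note that $\Phi_A(c)=\sup_{a\in A}\{\lfloor c,a\rfloor-q(a)\}\ge \lfloor c,a\rfloor-q(a)$, hence $\Phi_A^@(b)=\sup_c\{\lfloor c,b\rfloor-\Phi_A(c)\}\le\sup_c\{\lfloor c,b\rfloor-\lfloor c,a\rfloor+q(a)\}$; taking $c=b+a$ and using bilinearity and $q(b+a)=q(b)+\lfloor b,a\rfloor+q(a)$ gives a bound, but the cleaner route is: for the pairing $\lfloor\cdot,\cdot\rfloor$, the quadratic $q$ satisfies $q^@=q$ (immediate from $\lfloor c,b\rfloor-q(c)\le q(b)$ by $q(c-b)\ge0$, with equality at $c=b$). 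Since $\Phi_A\le q$ on $A$ is false in general, instead I use that $\Phi_A(c)\ge\lfloor c,a\rfloor-q(a)$ for each fixed $a\in A$, so $\Phi_A^@\le(\lfloor\cdot,a\rfloor-q(a))^@$; a direct computation of that conjugate yields $q(\cdot)$ shifted appropriately — I would simply verify $\Phi_A^@(b)\ge q(b)$ by evaluating the sup defining $\Phi_A^@$ at $c=b$ together with $\Phi_A\le\Phi_A^@$, which is statement (1)'s first inequality. That first inequality $\Phi_A\le\Phi_A^@$ is exactly the statement that $\Phi_A^@{}^@\ge\Phi_A$ composed with... — more honestly, $\Phi_A\le\Phi_A^@$ follows once one shows $\Phi_A$ is $\lfloor\cdot,\cdot\rfloor$-self-conjugate-dominated, which for the Fitzpatrick-type function comes from $q$-positivity of $A$: for $b,c$ one estimates $\lfloor c,b\rfloor-\Phi_A(c)$ and uses $q(a-b)\ge0$ when $b\in A$; but for general $b$ one takes the sup over $a\in A$ inside. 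I would write this out as the one genuine computation. Then (2) is the special case: for $a\in A$, $\Phi_A(a)=\sup_{a'\in A}\{\lfloor a,a'\rfloor-q(a')\}$, and $q$-positivity gives $\lfloor a,a'\rfloor-q(a')\le q(a)$ with equality at $a'=a$, so $\Phi_A(a)=q(a)$; combined with (1) and $\Phi_A^@(a)\le q(a)$ (evaluate the defining sup, using $\Phi_A\ge q$-type bound... actually using $\Phi_A(c)\ge\lfloor c,a\rfloor-q(a)$ gives $\lfloor c,a\rfloor-\Phi_A(c)\le q(a)$, so $\Phi_A^@(a)\le q(a)$) we get equality throughout.

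For (3), I would argue in two directions. First, $\Phi_A^@$ is $w(B,B)$-lsc (as a conjugate, it is a supremum of $w(B,B)$-continuous affine functions $\lfloor c,\cdot\rfloor-\Phi_A(c)$) and convex, and by (2) it is majorized by $q$ on $A$. Conversely, if $g$ is any $w(B,B)$-lsc convex function with $g\le q$ on $A$, then $g^@\ge q^@=q\ge$ the relevant bound on... — the clean statement is $g\le q$ on $A$ implies $g^@(b)\ge\sup_{a\in A}\{\lfloor a,b\rfloor-q(a)\}=\Phi_A(b)$ for all $b$, i.e. $g^@\ge\Phi_A$, hence $g=g^{@@}\le\Phi_A^@$ (using the biconjugation theorem for $w(B,B)$-lsc proper convex functions with respect to this self-dual pairing, which is standard and presumably in \cite{Si08,JE08}). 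That gives maximality. The mild obstacle here is making sure the biconjugation/Fenchel–Moreau theorem is available in the SSD setting and that $g^@$ is proper; I would cite \cite{Si08,JE08} for it rather than reprove it.

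Finally (4) and (5). For (4): by definition $A$ is $q$-representable iff there is a $w(B,B)$-lsc $f\in\mathcal{PC}_q(B)$ with $\mathcal{P}_q(f)=A$. If such $f$ exists, then $f\ge q$ everywhere and $f\le q$ on $A$, so by (3) $f\le\Phi_A^@$; hence $\Phi_A^@\ge q$ fails? No — $f\le\Phi_A^@$ and $f\ge q$ give $\Phi_A^@\ge$ nothing directly, but on $\mathcal{P}_q(\Phi_A^@)$ we have $\Phi_A^@=q\le$... I would instead argue: $\Phi_A^@\ge q$ on $A$ reversed — from (1), $\Phi_A^@\ge q$ everywhere, and from (2), $\Phi_A^@=q$ on $A$, so $A\subset\mathcal{P}_q(\Phi_A^@)$ always. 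Thus $A$ is $q$-representable iff $\Phi_A^@$ itself is a $q$-representation iff $\mathcal{P}_q(\Phi_A^@)\subset A$ (the reverse inclusion being automatic, and $\Phi_A^@\in\mathcal{PC}_q(B)$ by (1) plus properness from (2), and $w(B,B)$-lsc as noted). If conversely some other lsc $q$-representation $f$ exists, then $A=\mathcal{P}_q(f)$ and $f\le\Phi_A^@$ by (3) forces $\mathcal{P}_q(\Phi_A^@)\subset\mathcal{P}_q(f)=A$. For (5): unwind $b\in\mathcal{P}_q(\Phi_A^@)$. We have $b\in\mathcal{P}_q(\Phi_A^@)$ iff $\Phi_A^@(b)\le q(b)$ (the reverse is (1)) iff $\sup_{c\in B}\{\lfloor c,b\rfloor-\Phi_A(c)\}\le q(b)$ iff $\lfloor c,b\rfloor\le\Phi_A(c)+q(b)$ for all $c\in B$. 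So the condition "$\forall c\in B,\ \lfloor c,b\rfloor\le\Phi_A(c)+q(b)$" is exactly "$b\in\mathcal{P}_q(\Phi_A^@)$", and (5) is just (4) restated. I expect the only real friction to be getting the conjugacy computation in (1) airtight and citing the Fenchel–Moreau biconjugation in the SSD pairing correctly; everything else is bookkeeping with the definitions.
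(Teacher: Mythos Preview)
Your overall strategy matches the paper's, including the use of Fenchel--Moreau in the SSD pairing for (3) and the reduction of (4)--(5) to the inclusion $\mathcal P_q(\Phi_A^@)\subset A$. Two points are worth fixing.

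First, the aside that $q^@=q$ because ``$q(c-b)\ge 0$'' is false in a general SSD space: the form $\lfloor\cdot,\cdot\rfloor$ is not assumed positive semidefinite, so $q$ can take negative values and $q^@$ is typically $+\infty$ off a subspace or otherwise not equal to $q$. You correctly abandon this, but it should simply be deleted.

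Second, your argument for $\Phi_A\le\Phi_A^@$ in (1) never quite lands. The paper's device is to reverse the order: prove the first half of (2) first --- for $a\in A$, $q$-positivity gives $\Phi_A(a)=q(a)$ --- and then (1) is a one-liner:
\[
\Phi_A^@(b)=\sup_{c\in B}\{\lfloor c,b\rfloor-\Phi_A(c)\}\ \ge\ \sup_{a\in A}\{\lfloor a,b\rfloor-\Phi_A(a)\}=\sup_{a\in A}\{\lfloor a,b\rfloor-q(a)\}=\Phi_A(b).
\]
For $q\le\Phi_A^@$ the paper then combines $\Phi_A^@(b)\ge 2q(b)-\Phi_A(b)$ (take $c=b$) with $\Phi_A^@(b)\ge\Phi_A(b)$ to get $\Phi_A^@(b)\ge q(b)+|q(b)-\Phi_A(b)|\ge q(b)$, which is exactly your ``evaluate at $c=b$'' idea made explicit. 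With this reordering your plan is complete and coincides with the paper's proof; the Fenchel--Moreau step in (3) is cited there from \cite{Si11} (the non-Hausdorff version), not \cite{Si08,JE08}.
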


\begin{proof}
\textit{(1)} and \textit{(2)}. Let $a\in A$ and $b\in B.$ Since $A$ is $q$%
-positive, the infimum $\inf_{a^{\prime }\in A}q(a-a^{\prime })$ is attained
at $a^{\prime }=a;\ $hence we have the first equality in \textit{(2)}. Using
this equality, one gets%
\begin{equation*}
\Phi _{A}^{@}(b)=\sup_{c\in B}\{\lfloor c,b\rfloor -\Phi _{A}(c)\}\geq
\sup_{a\in A}\{\lfloor a,b\rfloor -\Phi _{A}(a)\}=\sup_{a\in A}\{\lfloor
a,b\rfloor -q(a)\}=\Phi _{A}(b),
\end{equation*}%
which proves the first inequality in \textit{(1)}. In view of this
inequality, given that $\Phi _{A}^{@}(b)=\sup_{c\in B}\{\lfloor c,b\rfloor
-\Phi _{A}(c)\}\geq \lfloor b,b\rfloor -\Phi _{A}(b)=2q\left( b\right) -\Phi
_{A}(b),$ we have $\Phi _{A}^{@}(b)\geq \max \left\{ 2q\left( b\right) -\Phi
_{A}(b),\Phi _{A}(b)\right\} =q\left( b\right) +\left\vert q\left( b\right)
-\Phi _{A}(b)\right\vert \geq q\left( b\right) ,$ so that the second
inequality in \textit{(1)} holds true. From the definition of $\Phi _{A}$ it
follows that $\Phi _{A}(c)\geq \lfloor c,a\rfloor -q(a)$ for every $c\in B;$
therefore%
\begin{equation*}
\Phi _{A}^{@}(a)=\sup_{c\in B}\{\lfloor c,a\rfloor -\Phi _{A}(c)\}\leq
q\left( a\right) .
\end{equation*}%
From this inequality and the second one in \textit{(1)} we obtain the second
equality in \textit{(2)}.

\textit{(3)}. Let $f$ be a $w\left( B,B\right) $-lsc convex function
majorized by $q$ on $A$. Then, for all $b\in B,$%
\begin{eqnarray*}
\Phi _{A}(b) &=&\sup_{a\in A}\left\{ \left\lfloor b,a\right\rfloor -q\left(
a\right) \right\} =\sup_{a\in A}\left\{ \left\lfloor a,b\right\rfloor
-q\left( a\right) \right\} \\
&\leq &\sup_{a\in A}\left\{ \left\lfloor a,b\right\rfloor -f\left( a\right)
\right\} \leq \sup_{c\in B}\left\{ \left\lfloor c,b\right\rfloor -f\left(
c\right) \right\} =f^{@}\left( b\right) .
\end{eqnarray*}%
Thus $\Phi _{A}\leq f^{@}$ on $B.$ Consequently $f^{@@}\leq $ $\Phi _{A}^{@}$
on $B.$ Since $f$ is $w\left( B,B\right) $-lsc, from the (non Hausdorff)
Fenchel-Moreau theorem \cite[Theorem 10.1]{Si11}, $f\leq $ $\Phi _{A}^{@}$
on $B.$

\textit{(4)}. We note from \textit{(1)} and \textit{(2)} that $\Phi
_{A}^{@}\in \mathcal{PC}_{q}(B)$ and $A\subset \mathcal{P}_{q}(\Phi
_{A}^{@}).$ It is clear from these observations that if $\mathcal{P}%
_{q}(\Phi _{A}^{@})\subset A$ then $\Phi _{A}^{@}$ is a $w\left( B,B\right) $%
-lsc $q$-representation of $A.$ Suppose, conversely, that $A$ is $q$%
-representable, so that there exists a $w(B;B)$-lsc function $f\in \mathcal{%
PC}_{q}(B)$ such that $\mathcal{P}_{q}(f)=A.$ It now follows from \textit{(3)%
} that $f\leq \Phi _{A}^{@}$ on A, and so $\mathcal{P}_{q}(\Phi
_{A}^{@})\subset \mathcal{P}_{q}(f)=A.$

(\textit{5)}. This statement follows from \textit{(4)}, since the inequality 
$\lfloor c,b\rfloor \leq \Phi _{A}(c)+q\left( b\right) $ holds for all $c\in
B$ if, and only if, $b\in \mathcal{P}_{q}(\Phi _{A}^{@}).$
\end{proof}

\bigskip

The next results should be compared with \cite[Theorems 6.3.(b) and 6.5.(a)]%
{Si07}.

\begin{corollary}
\label{q-repr hull}Let $A$ be a $q$-positive subset of an SSD\ space $B$.
Then $\mathcal{P}_{q}(\Phi _{A}^{@})$ is the smallest $q$-representable
superset of $A.$
\end{corollary}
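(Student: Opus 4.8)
The plan is to establish two things: first, that $\mathcal{P}_{q}(\Phi_{A}^{@})$ is itself a $q$-representable superset of $A$; second, that it is contained in every $q$-representable superset of $A$. Combining these gives the result. Throughout I will lean on Proposition~\ref{prop0}.

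For the first part, I note from parts \textit{(1)} and \textit{(2)} of Proposition~\ref{prop0} that $\Phi_{A}^{@}\in\mathcal{PC}_{q}(B)$ and that $A\subset\mathcal{P}_{q}(\Phi_{A}^{@})$, so $\mathcal{P}_{q}(\Phi_{A}^{@})$ is a superset of $A$. To see it is $q$-representable, write $C:=\mathcal{P}_{q}(\Phi_{A}^{@})$ and apply the criterion in part \textit{(4)}: it suffices to show $\mathcal{P}_{q}(\Phi_{C}^{@})\subset C$. For this I would compare $\Phi_{C}^{@}$ with $\Phi_{A}^{@}$. Since $\Phi_{A}^{@}$ is a $w(B,B)$-lsc convex function majorized by $q$ on $C$ (indeed it equals $q$ there by definition of $C$), part \textit{(3)} applied to the $q$-positive set $C$ — note $C$ is $q$-positive because $A\subset C\subset A^{\pi}$ via (\ref{mu}) and the fact that $\Phi_{A}^{@}\geq\Phi_{A}$ from part \textit{(1)} — gives $\Phi_{A}^{@}\leq\Phi_{C}^{@}$ on $B$. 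Hence $\mathcal{P}_{q}(\Phi_{C}^{@})\subset\mathcal{P}_{q}(\Phi_{A}^{@})=C$, as needed. (One must first check $C$ is nonempty and $q$-positive so that Proposition~\ref{prop0} applies to it; nonemptiness is immediate since $A\subset C$, and $q$-positivity follows as just indicated.)

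For the second part, let $D$ be any $q$-representable superset of $A$, witnessed by a $w(B,B)$-lsc $f\in\mathcal{PC}_{q}(B)$ with $\mathcal{P}_{q}(f)=D$. Since $A\subset D=\mathcal{P}_{q}(f)$, the function $f$ is majorized by $q$ on $A$, so part \textit{(3)} (applied to $A$) yields $f\leq\Phi_{A}^{@}$ on $B$. Therefore $\mathcal{P}_{q}(\Phi_{A}^{@})\subset\mathcal{P}_{q}(f)=D$, which is exactly what we want.

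The only delicate point — and the step I expect to require the most care — is verifying that $C=\mathcal{P}_{q}(\Phi_{A}^{@})$ is $q$-positive, so that the machinery of Proposition~\ref{prop0} is legitimately available for $C$ in the first part; everything else is a direct invocation of parts \textit{(1)}--\textit{(4)}. In fact this $q$-positivity can be read off from part \textit{(1)}: if $b\in C$ then $q(b)=\Phi_{A}^{@}(b)\geq\Phi_{A}(b)$, so by (\ref{mu}) we get $b\in A^{\pi}$, i.e. $C\subset A^{\pi}$; and since $A\subset C$, any two points of $C$ lie in $A^{\pi}$ and one checks directly from $q$-positive relatedness that $q$ of their difference is nonnegative — alternatively, one observes that $C$ is precisely the set $\mathcal{P}_{q}$ of the $q$-representation $\Phi_{A}^{@}$ once $q$-representability is known, and $q$-representable sets are $q$-positive by \cite[Lemma 19.8]{Si08}. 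Assembling the two inclusions completes the proof. \qed
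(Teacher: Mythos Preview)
Your second part (minimality) is correct and is essentially the paper's approach. The paper gets $\Phi_D^{@}\leq\Phi_A^{@}$ by first noting $\Phi_A\leq\Phi_D$ (since $A\subset D$) and then taking intrinsic conjugates, after which part \textit{(4)} yields $\mathcal{P}_q(\Phi_A^{@})\subset\mathcal{P}_q(\Phi_D^{@})\subset D$; you reach the same inequality via part \textit{(3)} applied directly to the representing function $f$. Either route is fine.

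Your first part, however, is considerably more convoluted than necessary and contains one incorrect sub-argument. The claim that $C\subset A^{\pi}$ forces $C$ to be $q$-positive is false: two elements of $A^{\pi}$ need not be $q$-positively related to one another (indeed, $A^{\pi}$ being $q$-positive is exactly the premaximality condition in (\ref{201}), which certainly does not hold for arbitrary $q$-positive $A$). Your ``alternatively'' points to the right fact but is phrased as though it were circular; it is not. Since parts \textit{(1)}--\textit{(2)} give $\Phi_A^{@}\in\mathcal{PC}_q(B)$, \cite[Lemma 19.8]{Si08} says immediately that $\mathcal{P}_q(\Phi_A^{@})$ is $q$-positive --- no appeal to the $q$-representability of $C$ is needed for this. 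But once you see this, notice that $\Phi_A^{@}$ is already $w(B,B)$-lsc (it is a supremum of $w(B,B)$-continuous affine functions), so $\Phi_A^{@}$ is \emph{itself} a $w(B,B)$-lsc $q$-representation of $C$, and $C$ is $q$-representable by definition. The detour through part \textit{(4)} applied to $C$ and the comparison $\Phi_A^{@}\leq\Phi_C^{@}$ is therefore superfluous. This is exactly how the paper handles it: one line observing that $\Phi_A^{@}$ represents $\mathcal{P}_q(\Phi_A^{@})$.
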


\begin{proof}
By Proposition \ref{prop0}\textit{.(2)}, $\mathcal{P}_{q}(\Phi _{A}^{@})$ is
a $q$-representable superset of $A.$ Let $C$ be a $q$-representable superset
of $A.$ Since $A\subset C,$ we have $\Phi _{A}\leq \Phi _{C}$ and hence $%
\Phi _{C}^{@}\leq \Phi _{A}^{@}.$ Therefore, by Proposition \ref{prop0}%
\textit{.(4), }$\mathcal{P}_{q}(\Phi _{A}^{@})\subset \mathcal{P}_{q}(\Phi
_{C}^{@})\subset C.$
\end{proof}

\begin{corollary}
Let $A$ be a $q$-positive subset of an SSD\ space $B,$ and denote by $C$ the
smallest $q$-representable superset of $A.$ Then $\Phi _{C}=\Phi _{A}.$
\end{corollary}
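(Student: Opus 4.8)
The plan is to reduce everything to the description of $C$ obtained in the previous corollary, namely $C=\mathcal{P}_{q}(\Phi _{A}^{@})$, and then to establish the two inequalities $\Phi _{A}\leq \Phi _{C}$ and $\Phi _{C}\leq \Phi _{A}$ on $B$ separately.

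First I would dispose of the easy inequality $\Phi _{A}\leq \Phi _{C}$. By Corollary \ref{q-repr hull} (or directly from Proposition \ref{prop0}.(2)) we have $A\subseteq C$, and for any nonempty $S\subseteq B$ the function $\Phi _{S}(b)=\sup_{s\in S}\{\lfloor b,s\rfloor -q(s)\}$ is clearly nondecreasing with respect to inclusion of $S$. Hence $\Phi _{A}\leq \Phi _{C}$ on $B$.

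The substantive step is the reverse inequality $\Phi _{C}\leq \Phi _{A}$. Fix $b\in B$; we may assume $\Phi _{A}(b)<+\infty $, since otherwise there is nothing to prove. Let $c\in C=\mathcal{P}_{q}(\Phi _{A}^{@})$, so that $\Phi _{A}^{@}(c)=q(c)$. From the definition of the intrinsic conjugate, $\Phi _{A}^{@}(c)=\sup_{x\in B}\{\lfloor x,c\rfloor -\Phi _{A}(x)\}\geq \lfloor b,c\rfloor -\Phi _{A}(b)$, and therefore $\lfloor b,c\rfloor -q(c)\leq \Phi _{A}(b)$. Taking the supremum over all $c\in C$ yields $\Phi _{C}(b)\leq \Phi _{A}(b)$. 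Combining the two inequalities gives $\Phi _{C}=\Phi _{A}$ on $B$.

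There is essentially no hard step here: once one uses $C=\mathcal{P}_{q}(\Phi _{A}^{@})$, the whole argument is just the defining inequality of the intrinsic conjugate evaluated at points of $\mathcal{P}_{q}(\Phi _{A}^{@})$. As an alternative to the pointwise computation, one could instead observe that $\Phi _{A}^{@}\leq q+\delta _{C}$ on $B$ (with equality on $C$ and the value $+\infty $ off $C$), take intrinsic conjugates to obtain $\Phi _{C}=(q+\delta _{C})^{@}\leq \Phi _{A}^{@@}$, and then invoke the non-Hausdorff Fenchel--Moreau theorem, since $\Phi _{A}$ is proper, convex and $w(B,B)$-lsc, to conclude $\Phi _{A}^{@@}=\Phi _{A}$; I would prefer the direct argument above, as it avoids Fenchel--Moreau altogether.
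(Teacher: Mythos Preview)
Your proof is correct, and your direct argument for $\Phi _{C}\leq \Phi _{A}$ is genuinely more elementary than the one in the paper. The paper obtains the reverse inequality by observing that $\Phi _{A}^{@}$ is majorized by $q$ on $C$, invoking Proposition~\ref{prop0}\textit{(3)} to get $\Phi _{A}^{@}\leq \Phi _{C}^{@}$, and then applying the (non-Hausdorff) Fenchel--Moreau theorem to both $\Phi _{A}$ and $\Phi _{C}$ to conclude $\Phi _{C}=\Phi _{C}^{@@}\leq \Phi _{A}^{@@}=\Phi _{A}$. Your argument replaces all of this by a single application of the defining inequality $\lfloor b,c\rfloor -\Phi _{A}(b)\leq \Phi _{A}^{@}(c)=q(c)$ for $c\in C=\mathcal{P}_{q}(\Phi _{A}^{@})$, which immediately gives $\Phi _{C}(b)\leq \Phi _{A}(b)$ upon taking the supremum over $c\in C$. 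The gain is that you need neither Proposition~\ref{prop0}\textit{(3)} nor Fenchel--Moreau; the alternative you sketch at the end is essentially the paper's route (in a slightly different packaging), so your instinct to prefer the direct argument is well founded.
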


\begin{proof}
Since $A\subset C,$ we have $\Phi _{A}\leq \Phi _{C}.$ On the other hand, by
Corollary \ref{q-repr hull}, $C=\mathcal{P}_{q}(\Phi _{A}^{@});$ hence $\Phi
_{A}^{@}$ is majorized by $q$ on $C$. Therefore, by Proposition \ref{prop0}%
\textit{.(3)}, $\Phi _{A}^{@}\leq \Phi _{C}^{@}.$ Since $\Phi _{A}\ $and $%
\Phi _{C}$ are $w\left( B,B\right) $-lsc, from the (non Hausdorff)
Fenchel-Moreau theorem \cite[Theorem 10.1]{Si11}, $\Phi _{C}=\Phi
_{C}^{@@}\leq \Phi _{A}^{@@}=\Phi _{A}.$ We thus have $\Phi _{C}=\Phi _{A}.$
\end{proof}

\bigskip

We continue with a result about the domain of $\Phi _{A}^{@}$ which will be
necessary in the sequel.

\begin{lemma}[about the domain of $\Phi _{A}^{@}$]
\label{laboutd} Let $A$ be a $q$-positive subset of an SSD\ space $B$. Then, 
\begin{equation*}
convA\subset \hbox{\rm dom}\Phi _{A}^{@}\subset conv^{w}A.
\end{equation*}
\end{lemma}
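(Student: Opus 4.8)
The plan is to treat the two inclusions separately; the first is immediate from what is already on the table, and essentially all the work is in the second. For $convA\subseteq \dom\Phi _A^{@}$ I would observe that $\Phi _A^{@}$ is convex, being by definition the pointwise supremum of the affine functions $b\mapsto \lfloor c,b\rfloor -\Phi _A(c)$, so $\dom\Phi _A^{@}$ is a convex set; by Proposition \ref{prop0}.(2) we have $\Phi _A^{@}(a)=q(a)\in \hbox{\tenmsb R}$ for every $a\in A$, hence $A\subseteq \dom\Phi _A^{@}$, and passing to the convex hull gives the inclusion.

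For $\dom\Phi _A^{@}\subseteq conv^{w}A$ I would argue by contraposition. The device that makes it work is to shift by a fixed point $a_{0}\in A$: expanding $0\le q(a-a_{0})$ yields $\lfloor a_{0},a\rfloor -q(a)\le q(a_{0})$ for every $a\in A$, so that, for every $c\in B$,
\[
\Phi _A(c+a_{0})=\sup_{a\in A}\bigl\{\lfloor c,a\rfloor +(\lfloor a_{0},a\rfloor -q(a))\bigr\}\le \sup_{a\in A}\lfloor c,a\rfloor +q(a_{0}).
\]
Now suppose $b\notin conv^{w}A$. Since $conv^{w}A$ is a nonempty $w(B,B)$-closed convex set and $\{b\}$ is compact, a separation argument in the (possibly non-Hausdorff) locally convex space $(B,w(B,B))$, together with the standard fact that every $w(B,B)$-continuous linear functional on $B$ has the form $\lfloor \cdot ,d\rfloor$ for some $d\in B$, produces $d\in B$ and $\gamma \in \hbox{\tenmsb R}$ with $\lfloor d,a\rfloor \le \gamma $ for all $a\in A$ and $\lfloor d,b\rfloor >\gamma $. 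Applying the estimate above with $c=td$, $t>0$, gives $\Phi _A(td+a_{0})\le t\gamma +q(a_{0})$, so from the definition of the intrinsic conjugate
\[
\Phi _A^{@}(b)\ge \lfloor td+a_{0},b\rfloor -\Phi _A(td+a_{0})\ge t\bigl(\lfloor d,b\rfloor -\gamma \bigr)+\lfloor a_{0},b\rfloor -q(a_{0}).
\]
Letting $t\to +\infty $ and using $\lfloor d,b\rfloor -\gamma >0$ forces $\Phi _A^{@}(b)=+\infty $, i.e.\ $b\notin \dom\Phi _A^{@}$, which is the desired inclusion.

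The step I expect to be the main obstacle is precisely obtaining a usable upper bound on $\Phi _A$ along the separating direction: a direct bound on $\Phi _A(td)$ is worthless because $-q(a)$ need not be bounded above over $A$, and it is the shift by $a_{0}\in A$ combined with $q$-positivity that tames that term. The only other points needing care are the validity of strict separation and the identification of the topological dual of $(B,w(B,B))$ in the non-Hausdorff setting; both are classical and can simply be cited.
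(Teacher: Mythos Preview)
Your proof is correct. The first inclusion is handled exactly as in the paper. For the second inclusion, however, you take a genuinely different route.

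The paper's argument for $\dom\Phi_A^{@}\subset conv^{w}A$ is a one-liner based on Proposition~\ref{prop0}\textit{(3)}: the function $\Phi_A^{@}+\delta_{conv^{w}A}$ is $w(B,B)$-lsc, convex, and majorized by $q$ on $A$, so the maximality of $\Phi_A^{@}$ among such functions gives $\Phi_A^{@}+\delta_{conv^{w}A}\le \Phi_A^{@}$, and taking domains yields the inclusion immediately. Your approach instead proceeds by direct estimation: you separate $b$ from $conv^{w}A$ by a functional $\lfloor\cdot,d\rfloor$, use $q$-positivity (via the shift by $a_0\in A$) to bound $\Phi_A(td+a_0)$ linearly in $t$, and push $t\to+\infty$ in the definition of $\Phi_A^{@}(b)$. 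This is more elementary in that it needs only geometric Hahn--Banach rather than the (non-Hausdorff) Fenchel--Moreau theorem on which Proposition~\ref{prop0}\textit{(3)} rests; the paper's route is shorter precisely because that machinery is already in place. Your remark that the shift by $a_0$ is the device that tames the unbounded $-q(a)$ term is spot on, and it is also the sole place where $q$-positivity enters your argument---whereas in the paper's proof, $q$-positivity enters only indirectly through the properties of $\Phi_A^{@}$ already recorded in Proposition~\ref{prop0}.
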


\begin{proof}
Since $\Phi _{A}^{@}$ coincides with $q$ in $A$, we have that $A\subset %
\hbox{\rm dom}\Phi _{A}^{@}$, hence from the convexity of $\Phi _{A}^{@}$ it
follows that 
\begin{equation*}
convA\subset \hbox{\rm dom}\Phi _{A}^{@}.
\end{equation*}%
On the other hand, from Proposition \ref{prop0}\textit{(3)} $\Phi
_{A}^{@}+\delta _{conv^{w}A}\leq \Phi _{A}^{@}$, because\newline
$\Phi _{A}^{@}+\delta _{conv^{w}A}$ is $w\left( B,B\right) $-lsc, convex and
majorized by $q$ on $A$. Thus, 
\begin{equation*}
\hbox{\rm dom}\Phi _{A}^{@}\subset \hbox{\rm dom}\left( \Phi _{A}^{@}+\delta
_{conv^{w}A}\right) \subset conv^{w}A.
\end{equation*}%
This finishes the proof.
\end{proof}

\subsection{On a problem posed by Fitzpatrick}

Let $B$ be an SSD$\ $space and $f:B\rightarrow \hbox{\tenmsb R}\cup
\{+\infty \}$ be a proper convex function. The generalized Fenchel-Young
inequality establishes that 
\begin{equation}
f(a)+f^{@}(b)\geq \lfloor a,b\rfloor ,\quad \forall \,a,b\in B.  \label{FY}
\end{equation}%
We define the $q$-\textit{subdifferential} of $f$ at $a\in B$ by%
\begin{equation*}
\partial _{q}f(a):=\left\{ b\in B:f(a)+f^{@}(b)=\lfloor a,b\rfloor \right\}
\end{equation*}%
and the set%
\begin{equation*}
G_{f}:=\{b\in B:b\in \partial _{q}f(b)\}.
\end{equation*}%
In this Subsection we are interested in identifying sets $A\subset B$ with
the property that $G_{\Phi _{A}}=A.$ The problem of characterizing such sets
is an abstract version of an open problem on monotone operators posed by
Fitzpatrick \cite[Problem 5.2]{Fi88}.

\begin{proposition}
\label{prop1}Let $B$ be an SSD$\ $space and $f:B\rightarrow \hbox{\tenmsb R}%
\cup \{+\infty \}$ be a $w\left( B,B\right) $-lsc proper convex function
such that $G_{f}\neq \emptyset $. Then the set $G_{f}$ is $q$-representable.
\end{proposition}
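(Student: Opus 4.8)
The plan is to exhibit an explicit $w(B,B)$-lsc $q$-representation of $G_{f}$. The natural candidate is
\begin{equation*}
g:=\tfrac{1}{2}\bigl(f+f^{@}\bigr),
\end{equation*}
and the whole proof amounts to checking three things: that $g\in\mathcal{PC}_{q}(B)$, that $g$ is $w(B,B)$-lsc, and that $\mathcal{P}_{q}(g)=G_{f}$. First I would unwind the definitions: by the definition of $\partial_{q}f$ and of $G_{f}$, a point $b\in B$ lies in $G_{f}$ precisely when $f(b)+f^{@}(b)=\lfloor b,b\rfloor=2q(b)$.

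Next I would verify that $g\in\mathcal{PC}_{q}(B)$. The inequality $g\geq q$ on $B$ is immediate from the generalized Fenchel--Young inequality (\ref{FY}) applied with $a=b$, which gives $f(b)+f^{@}(b)\geq\lfloor b,b\rfloor=2q(b)$. Convexity of $g$ follows from convexity of $f$ (a hypothesis) and of $f^{@}$ (a supremum of $\lfloor\cdot,\cdot\rfloor$-affine functions). For properness, note that since $f$ is proper its domain is nonempty, so for any fixed $c\in\dom f$ one has $f^{@}(b)\geq\lfloor c,b\rfloor-f(c)>-\infty$ for every $b\in B$; hence $f^{@}$ never takes the value $-\infty$, and neither does $f$, so the sum $f+f^{@}$ is unambiguously defined with values in $\mathbb{R}\cup\{+\infty\}$ and $g>-\infty$ on $B$. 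Since $G_{f}\neq\emptyset$, picking $b_{0}\in G_{f}$ forces $f(b_{0})$ and $f^{@}(b_{0})$ both finite, so $g(b_{0})=q(b_{0})\in\mathbb{R}$ and $g$ is proper. The identity $\mathcal{P}_{q}(g)=G_{f}$ is then exactly the first step: $g(b)=q(b)$ iff $f(b)+f^{@}(b)=2q(b)=\lfloor b,b\rfloor$ iff $b\in G_{f}$.

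It remains to check that $g$ is $w(B,B)$-lsc. By hypothesis $f$ is $w(B,B)$-lsc; and $f^{@}=\sup_{c\in B}\bigl(\lfloor c,\cdot\rfloor-f(c)\bigr)$ is a supremum of functions each of which is $w(B,B)$-continuous, since every map $b\mapsto\lfloor c,b\rfloor$ is $w(B,B)$-continuous by the very definition of the topology $w(B,B)$; hence $f^{@}$ is $w(B,B)$-lsc, and a sum of two $w(B,B)$-lsc functions with values in $\mathbb{R}\cup\{+\infty\}$ is again $w(B,B)$-lsc. Thus $g$ is a $w(B,B)$-lsc element of $\mathcal{PC}_{q}(B)$ with $\mathcal{P}_{q}(g)=G_{f}$, which is precisely the assertion that $G_{f}$ is $q$-representable. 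I do not anticipate a genuine obstacle here: the only points that require care are that $f^{@}$ never equals $-\infty$ (so that $g$ is well defined and proper) and that $f^{@}$ inherits $w(B,B)$-lower semicontinuity from the structure of the conjugate — both routine, resting respectively on properness of $f$ and on the defining property of the weak topology.
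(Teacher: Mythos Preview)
Your proof is correct and follows exactly the same approach as the paper: both take $h:=\tfrac{1}{2}(f+f^{@})$ as the $w(B,B)$-lsc $q$-representation of $G_{f}$. The paper's proof is a one-liner that simply asserts $h$ is $w(B,B)$-lsc, proper, and convex with $G_{f}=\mathcal{P}_{q}(h)$; you have supplied the routine verifications that the paper leaves implicit.
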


\begin{proof}
Taking the $w\left( B,B\right) $-lsc proper convex function $h:=\frac{1}{2}%
(f+f^{@})$, we have that 
\begin{equation*}
G_{f}=\mathcal{P}_{q}(h).
\end{equation*}
\end{proof}

\begin{theorem}
\label{theorem1}Let $A$ be a $q$-positive subset of an SSD space $B$. Then

(1) $A\subset \mathcal{P}_{q}(\Phi _{A}^{@})\subset G_{\Phi _{A}}\subset
A^{\pi }\cap conv^{w}A$;

(2) If $A$ is convex and $w\left( B,B\right) $-closed,%
\begin{equation*}
A=G_{\Phi _{A}};
\end{equation*}

(3) If $A$ is maximally $q$-positive,%
\begin{equation*}
A=G_{\Phi _{A}}.
\end{equation*}
\end{theorem}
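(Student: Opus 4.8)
The plan is to prove the chain of inclusions in part (1) first, and then read off (2) and (3) by specializing: when $A$ is convex and $w(B,B)$-closed one has $conv^{w}A=A$, and when $A$ is maximally $q$-positive one has $A^{\pi }=A$, so in both cases the chain in (1) collapses to a single set.

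For (1) I would check the four links separately. The leftmost inclusion $A\subset \mathcal{P}_{q}(\Phi _{A}^{@})$ is exactly Proposition \ref{prop0}\textit{(2)}. For $\mathcal{P}_{q}(\Phi _{A}^{@})\subset G_{\Phi _{A}}$, take $b$ with $\Phi _{A}^{@}(b)=q(b)$; Proposition \ref{prop0}\textit{(1)} gives $\Phi _{A}(b)\leq \Phi _{A}^{@}(b)=q(b)$, while inserting $c=b$ into the definition of $\Phi _{A}^{@}$ gives $q(b)=\Phi _{A}^{@}(b)\geq \lfloor b,b\rfloor -\Phi _{A}(b)=2q(b)-\Phi _{A}(b)$, hence $\Phi _{A}(b)\geq q(b)$; thus $\Phi _{A}(b)=q(b)$, so $\Phi _{A}(b)+\Phi _{A}^{@}(b)=\lfloor b,b\rfloor $ and $b\in \partial _{q}\Phi _{A}(b)$, i.e. $b\in G_{\Phi _{A}}$. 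For $G_{\Phi _{A}}\subset A^{\pi }$, from $\Phi _{A}(b)+\Phi _{A}^{@}(b)=\lfloor b,b\rfloor =2q(b)$ and $\Phi _{A}^{@}(b)\geq q(b)$ (Proposition \ref{prop0}\textit{(1)} again) I get $\Phi _{A}(b)\leq q(b)$, which by (\ref{mu}) is precisely $b\in A^{\pi }$. For $G_{\Phi _{A}}\subset conv^{w}A$, the same Fenchel-Young equality forces $\Phi _{A}^{@}(b)<+\infty $ because $\Phi _{A}$ is proper, so $b\in \dom \Phi _{A}^{@}$, and Lemma \ref{laboutd} gives $\dom \Phi _{A}^{@}\subset conv^{w}A$.

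Granting (1), part (2) is immediate: with $conv^{w}A=A$ the chain becomes $A\subset \mathcal{P}_{q}(\Phi _{A}^{@})\subset G_{\Phi _{A}}\subset A^{\pi }\cap A\subset A$, so all these sets coincide. Part (3) is the same remark once I note that $q$-positivity of $A$ already gives $A\subset A^{\pi }$, while maximal $q$-positivity is by definition the reverse inclusion $A^{\pi }\subset A$; hence $A^{\pi }=A$ and the chain in (1) again forces $A=G_{\Phi _{A}}$.

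I expect the only step carrying real content to be $\mathcal{P}_{q}(\Phi _{A}^{@})\subset G_{\Phi _{A}}$ — namely the two-sided squeeze pinning $\Phi _{A}(b)$ to $q(b)$, which is essentially the estimate $\Phi _{A}^{@}(b)\geq q(b)+|q(b)-\Phi _{A}(b)|$ extracted from inside the proof of Proposition \ref{prop0}\textit{(1)} — together with the appeal to Lemma \ref{laboutd} that keeps $G_{\Phi _{A}}$ inside $conv^{w}A$. Everything else is bookkeeping with the generalized Fenchel-Young (in)equality and the definitions of $A^{\pi }$ and of maximal $q$-positivity.
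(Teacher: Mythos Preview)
Your proof is correct and, for parts (1) and (2), essentially identical to the paper's argument: the same chain of inclusions is established via Proposition~\ref{prop0}, the Fenchel--Young equality, and Lemma~\ref{laboutd}, and part (2) collapses the chain exactly as you describe.

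The one notable difference is in part (3). The paper invokes Proposition~\ref{prop1} to conclude that $G_{\Phi_A}$ is $q$-representable, hence $q$-positive, and then uses the maximality of $A$ together with $A\subset G_{\Phi_A}$ to force equality. Your route is more direct: you simply read off $G_{\Phi_A}\subset A^{\pi}$ from the chain in (1) and observe that maximal $q$-positivity means $A^{\pi}=A$. This avoids Proposition~\ref{prop1} entirely and is the cleaner argument; the paper's route, on the other hand, highlights that $G_{\Phi_A}$ is itself $q$-representable, which is of independent interest even when $A$ is not maximal.
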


\begin{proof}
\textit{(1)}. By Proposition \ref{prop0}\textit{(2)}, we have the first
inclusion in \textit{(1)}. Let $b\in \mathcal{P}_{q}(\Phi _{A}^{@})$. Since $%
\Phi _{A}(b)\leq \Phi _{A}^{@}(b)=q(b)$, we get 
\begin{equation*}
2q(b)\leq \Phi _{A}(b)+\Phi _{A}^{@}(b)\leq 2q(b).
\end{equation*}%
It follows that $b\in G_{\Phi _{A}}$. This shows that $\mathcal{P}_{q}(\Phi
_{A}^{@})\subset G_{\Phi _{A}}$.Using Proposition \ref{prop0}\textit{(1)},
we infer that for any $a\in G_{\Phi _{A}}$, $\Phi _{A}(a)\leq q(a)$, so $%
G_{\Phi _{A}}\subset A^{\pi }$. On the other hand, since $G_{\Phi
_{A}}\subset \hbox{\rm dom}\Phi _{A}^{@}$, Lemma \ref{laboutd} implies that $%
G_{\Phi _{A}}\subset conv^{w}A$. This proves the last inclusion in \textit{%
(1)}.

\textit{(2)}.\textit{\ }This is immediate from \textit{(1)} since $%
conv^{w}A=A.$

\textit{(3)}. This follows directly from Proposition \ref{prop1} and \textit{%
(1)}.
\end{proof}

\begin{proposition}
\label{prop2} Let $A$ be a nonempty subset of an SSD$\ $space $B$ and let $D$
be a $w\left( B,B\right) $-closed convex subset of $B$ such that%
\begin{equation}
\Phi _{A}(b)\geq q(b)\text{\qquad }\forall \,b\in D.  \label{important}
\end{equation}%
Suppose that $A^{\pi }\cap D\neq \emptyset .$ Then $A^{\pi }\cap D$ is $q$%
-representable.
\end{proposition}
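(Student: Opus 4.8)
The plan is to exhibit an explicit $w(B,B)$-lsc function in $\mathcal{PC}_q(B)$ whose $q$-equality set is exactly $A^\pi \cap D$, since producing such a $q$-representation is precisely what $q$-representability requires. The natural candidate is $f := \Phi_A^{@} + \delta_D$. First I would check that $f$ is proper: by hypothesis $A^\pi \cap D \neq \emptyset$, and by the characterization (\ref{mu}) together with Proposition \ref{prop0}.(1), any $b \in A^\pi \cap D$ satisfies $\Phi_A^{@}(b) \geq q(b)$ and $\Phi_A(b) \leq q(b) \leq \Phi_A^{@}(b)$; combined with the argument in the proof of Theorem \ref{theorem1}.(1) this forces $\Phi_A(b) = \Phi_A^{@}(b) = q(b)$, so $b \in \hbox{\rm dom}\,\Phi_A^{@} \cap D = \hbox{\rm dom} f$. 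Convexity and $w(B,B)$-lower semicontinuity of $f$ are inherited from $\Phi_A^{@}$ (lsc convex by Proposition \ref{prop0}.(1) and the Fenchel--Moreau framework used earlier) and from $\delta_D$ (lsc convex because $D$ is $w(B,B)$-closed and convex). Since $\Phi_A^{@} \geq q$ everywhere by Proposition \ref{prop0}.(1), we also get $f \geq q$, so $f \in \mathcal{PC}_q(B)$.

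Next I would identify $\mathcal{P}_q(f)$. For $b \in D$ we have $f(b) = \Phi_A^{@}(b)$, so $f(b) = q(b)$ iff $b \in D$ and $\Phi_A^{@}(b) = q(b)$, i.e. $\mathcal{P}_q(f) = \mathcal{P}_q(\Phi_A^{@}) \cap D$. So it remains to show $\mathcal{P}_q(\Phi_A^{@}) \cap D = A^\pi \cap D$. The inclusion $\subset$ is immediate from Theorem \ref{theorem1}.(1), which gives $\mathcal{P}_q(\Phi_A^{@}) \subset A^\pi$. For the reverse inclusion, take $b \in A^\pi \cap D$; then $\Phi_A(b) \leq q(b)$ by (\ref{mu}), while $\Phi_A(b) \geq q(b)$ by the standing hypothesis (\ref{important}) since $b \in D$, hence $\Phi_A(b) = q(b)$. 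Now using $q(b) \leq \Phi_A^{@}(b)$ and the chain $2q(b) \leq \Phi_A(b) + \Phi_A^{@}(b)$ exactly as in the proof of Theorem \ref{theorem1}.(1) — or more directly, from Proposition \ref{prop0}.(1), $\Phi_A^{@}(b) \geq \max\{2q(b) - \Phi_A(b), \Phi_A(b)\} = q(b)$ — but to get equality I would instead invoke Proposition \ref{prop0}.(3): $\Phi_A^{@}$ is the largest $w(B,B)$-lsc convex function majorized by $q$ on $A$, and one checks that the pointwise infimum-type bound forces $\Phi_A^{@}(b) \leq q(b)$ when $\Phi_A(b) = q(b)$; combining the two inequalities yields $\Phi_A^{@}(b) = q(b)$, i.e. $b \in \mathcal{P}_q(\Phi_A^{@})$.

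The one delicate point — and the step I expect to be the main obstacle — is the last implication, getting $\Phi_A^{@}(b) = q(b)$ from $\Phi_A(b) = q(b)$ and $b \in D$, rather than only the easy inequality $\Phi_A^{@}(b) \geq q(b)$. The clean way is this: the function $g := \Phi_A^{@}\wedge(q + \delta_{\{b\}}\text{-type perturbation})$ is awkward, so instead I would argue that $\min\{\Phi_A^{@}, \text{(the lsc convex hull of }q\text{ on }convA \cup \{b\})\}$ — more precisely, apply Proposition \ref{prop0}.(3) to the function obtained by taking the largest $w(B,B)$-lsc convex minorant of the function equal to $q$ on $A\cup\{b\}$; since $b \in A^\pi$, that function is majorized by $q$ on $A$, hence is $\leq \Phi_A^{@}$, and at $b$ its value is $\leq q(b)$. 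This requires knowing that $b \in A^\pi$ implies $A \cup \{b\}$ still admits $q$ as an upper bound in the relevant averaged sense, which is exactly what $q(b - a) \geq 0$ for all $a \in A$ encodes. Once this is in hand, $f = \Phi_A^{@} + \delta_D$ is the desired $w(B,B)$-lsc $q$-representation of $A^\pi \cap D$, and the proof is complete. \qed
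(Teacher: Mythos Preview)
Your choice of representing function is the source of the trouble: you take $f = \Phi_A^{@} + \delta_D$, whereas the paper takes the much simpler $f = \Phi_A + \delta_D$. With the paper's choice, hypothesis (\ref{important}) immediately gives $f \geq q$ on $B$ (on $D$ by (\ref{important}), off $D$ trivially), and then $\mathcal{P}_q(f) = \{b \in D : \Phi_A(b) = q(b)\} = \{b \in D : \Phi_A(b) \leq q(b)\} = A^\pi \cap D$ by (\ref{mu}) and (\ref{important}) --- no delicate step is needed.

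Your ``delicate point'' is not merely delicate but actually false: $b \in A^\pi \cap D$ need not imply $\Phi_A^{@}(b) = q(b)$. Take $B = \mathbb{R}^2$ with $\lfloor (x_1,x_2),(y_1,y_2)\rfloor = x_1 y_2 + x_2 y_1$, so $q(x_1,x_2) = x_1 x_2$; let $A = \{(0,0)\}$ and $D = \{(t,0) : t \geq 0\}$. Then $\Phi_A \equiv 0$, so (\ref{important}) holds on $D$, and $A^\pi \cap D = D$; but $\Phi_A^{@} = \delta_{\{(0,0)\}}$, so $\mathcal{P}_q(\Phi_A^{@}) \cap D = \{(0,0)\} \neq D$. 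Hence $\Phi_A^{@} + \delta_D$ does \emph{not} $q$-represent $A^\pi \cap D$. The attempted rescue via Proposition~\ref{prop0}\textit{(3)} cannot work either: that result says any $w(B,B)$-lsc convex $g$ majorized by $q$ on $A$ satisfies $g \leq \Phi_A^{@}$, which bounds $\Phi_A^{@}(b)$ from \emph{below}, not above. A secondary issue is that Proposition~\ref{prop0} and Theorem~\ref{theorem1} are stated for $q$-positive $A$, while Proposition~\ref{prop2} only assumes $A$ nonempty, so your appeals to them are not justified as written; the paper's argument avoids this entirely by working with $\Phi_A$ directly.
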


\begin{proof}
We take $f=\Phi _{A}+\delta _{D}$; this function is $w\left( B,B\right) $%
-lsc, proper (because $A^{\pi }\neq \emptyset $) and convex. Let $b\in B$ be
such that $f(b)\leq q(b)$, so 
\begin{equation*}
\Phi _{A}(b)\leq q(b)\text{ and }b\in D.
\end{equation*}%
This implies that $b\in A^{\pi }\cap D$. From (\ref{important}) we infer
that $f(b)=\Phi _{A}(b)=q(b)$. It follows that $f\in \mathcal{PC}_{q}(B)$.
It is easy to see that $f$ is a $q$-representative function for $A^{\pi
}\cap D$.
\end{proof}

\begin{proposition}
Let $A$ be a $q$-positive subset of an SSD space $B$. If $C=A^{\pi }\cap
conv^{w}A$ is $q$-positive, then%
\begin{equation*}
C=G_{\Phi _{C}}=C^{\pi }\cap conv^{w}C.
\end{equation*}
\end{proposition}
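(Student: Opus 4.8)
The plan is to show that $C$ satisfies both the hypotheses of Theorem~\ref{theorem1}.(2) — i.e.\ that $C$ is convex and $w(B,B)$-closed — and the hypotheses needed to identify $C^{\pi}\cap conv^{w}C$ with $C$, using the fact that $C$ is assumed $q$-positive. First I would observe that, since $C$ is $q$-positive by hypothesis, Theorem~\ref{theorem1}.(1) applied to $C$ gives $C\subset \mathcal{P}_{q}(\Phi_{C}^{@})\subset G_{\Phi_{C}}\subset C^{\pi}\cap conv^{w}C$. So the whole statement will follow once I prove the reverse inclusion $C^{\pi}\cap conv^{w}C\subset C$; that single inclusion forces every set in the chain to coincide with $C$.

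To get $C^{\pi}\cap conv^{w}C\subset C$, the key is to relate the Fitzpatrick-type data of $C$ back to that of $A$. I would first argue $conv^{w}C\subset conv^{w}A$: indeed $C=A^{\pi}\cap conv^{w}A\subset conv^{w}A$, and $conv^{w}A$ is $w(B,B)$-closed and convex, so taking the $w(B,B)$-closed convex hull of $C$ stays inside $conv^{w}A$. Next I would show $C^{\pi}\subset A^{\pi}$: since $A\subset A^{\pi}\cap conv^{w}A=C$ (here I use that $A$ is $q$-positive, so $A\subset A^{\pi}$, and trivially $A\subset conv^{w}A$), any $b$ that is $q$-positively related to $C$ is in particular $q$-positively related to $A$, whence $C^{\pi}\subset A^{\pi}$. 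Combining, $C^{\pi}\cap conv^{w}C\subset A^{\pi}\cap conv^{w}A=C$, which is exactly what we need.

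Assembling the pieces: from $C$ being $q$-positive, Theorem~\ref{theorem1}.(1) yields $C\subset G_{\Phi_{C}}\subset C^{\pi}\cap conv^{w}C$, and the inclusion just proved gives $C^{\pi}\cap conv^{w}C\subset C$. Hence all the displayed sets are equal to $C$, which is the assertion. I expect the only real subtlety to be the verification that $conv^{w}C\subset conv^{w}A$ and $C^{\pi}\subset A^{\pi}$ — both hinge on the observation $A\subset C$, which in turn uses the $q$-positivity of $A$ to write $A\subset A^{\pi}$. Once that observation is in place, everything else is a direct application of Theorem~\ref{theorem1}. One should double-check that no circularity arises from applying Theorem~\ref{theorem1}.(1) to $C$ rather than to $A$: this is legitimate precisely because $C$ is \emph{assumed} $q$-positive in the statement, so $C$ itself plays the role of ``$A$'' in that theorem.
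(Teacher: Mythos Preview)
Your argument is correct and follows essentially the same route as the paper: apply Theorem~\ref{theorem1}(1) to the $q$-positive set $C$ to get $C\subset G_{\Phi_C}\subset C^\pi\cap conv^{w}C$, then close the chain by showing $C^\pi\cap conv^{w}C\subset A^\pi\cap conv^{w}A=C$ via the two observations $A\subset C$ (hence $C^\pi\subset A^\pi$) and $C\subset conv^{w}A$ (hence $conv^{w}C\subset conv^{w}A$). One minor remark: your opening sentence announces a plan to verify the hypotheses of Theorem~\ref{theorem1}(2), i.e.\ that $C$ is convex and $w(B,B)$-closed, but you never do this and it is not needed---indeed $A^\pi$ is not convex in general, so that route would not work directly; fortunately your actual argument bypasses it entirely.
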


\begin{proof}
Clearly $conv^{w}A\supset C$, from which $conv^{w}A\supset conv^{w}C$. Since 
$C\supset A,$ $A^{\pi }\supset C^{\pi }$. Thus $C=A^{\pi }\cap
conv^{w}A\supset C^{\pi }\cap conv^{w}C$. However, from Theorem \ref%
{theorem1}\textit{(1)}, $C\subset G_{\Phi _{C}}\subset C^{\pi }\cap
conv^{w}C $.
\end{proof}

\begin{proposition}
\label{prop5}Let $A$ be a $q$-positive subset of an SSD space $B$. If 
\begin{equation}
\Phi _{A}(b)\geq q(b)\text{\qquad }\forall \,b\in conv^{w}A,  \label{ineq}
\end{equation}%
then%
\begin{equation*}
G_{\Phi _{A}}=\mathcal{P}_{q}(\Phi _{A}^{@}).
\end{equation*}
\end{proposition}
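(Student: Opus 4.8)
The plan is to prove the two inclusions $G_{\Phi_A}\subset\mathcal{P}_q(\Phi_A^{@})$ and $\mathcal{P}_q(\Phi_A^{@})\subset G_{\Phi_A}$ separately; the latter is already available from Theorem~\ref{theorem1}\textit{(1)} with no hypothesis needed, so the real content is the forward inclusion, and this is where assumption (\ref{ineq}) must be used. First I would take $b\in G_{\Phi_A}$, which by definition of $G_{\Phi_A}$ (and the Fenchel--Young inequality (\ref{FY}) forcing equality in $\Phi_A(b)+\Phi_A^{@}(b)\ge\lfloor b,b\rfloor=2q(b)$) gives $\Phi_A(b)+\Phi_A^{@}(b)=2q(b)$ together with $\Phi_A(b)\le q(b)$ and $\Phi_A^{@}(b)\ge q(b)$. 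From $\Phi_A(b)\le q(b)$ and (\ref{mu}) we get $b\in A^\pi$; and since $G_{\Phi_A}\subset\dom\Phi_A^{@}$, Lemma~\ref{laboutd} gives $b\in conv^w A$.

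Now the hypothesis (\ref{ineq}) says exactly that $\Phi_A(b)\ge q(b)$ for every $b\in conv^wA$, so in particular $\Phi_A(b)\ge q(b)$. Combining this with $\Phi_A(b)\le q(b)$ (obtained above from $b\in G_{\Phi_A}$), we conclude $\Phi_A(b)=q(b)$. Feeding this back into the equality $\Phi_A(b)+\Phi_A^{@}(b)=2q(b)$ yields $\Phi_A^{@}(b)=q(b)$, i.e. $b\in\mathcal{P}_q(\Phi_A^{@})$. This establishes $G_{\Phi_A}\subset\mathcal{P}_q(\Phi_A^{@})$, and together with the reverse inclusion from Theorem~\ref{theorem1}\textit{(1)} we obtain the claimed equality.

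I expect the only delicate point to be making sure each of the three pieces of information about $b\in G_{\Phi_A}$ is justified cleanly: that membership in $G_{\Phi_A}=\{b:b\in\partial_q\Phi_A(b)\}$ genuinely forces the scalar equality $\Phi_A(b)+\Phi_A^{@}(b)=2q(b)$ (immediate from the definition of $\partial_q$), that this equality plus Proposition~\ref{prop0}\textit{(1)} (which gives $\Phi_A^{@}(b)\ge q(b)$) squeezes out $\Phi_A(b)\le q(b)$, and that $G_{\Phi_A}\subset\dom\Phi_A^{@}$ so Lemma~\ref{laboutd} applies. None of these is hard, but the argument does rely on having $b\in conv^wA$ before (\ref{ineq}) can be invoked, so the order—first locate $b$ in $conv^wA$, then apply the hypothesis—is the one point that must not be skipped.
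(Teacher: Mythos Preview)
Your proof is correct and follows essentially the same route as the paper. The only cosmetic difference is that the paper invokes Theorem~\ref{theorem1}\textit{(1)} directly to obtain $G_{\Phi_A}\subset A^{\pi}\cap conv^{w}A$ (hence $\Phi_A(b)\le q(b)$ and $b\in conv^{w}A$ in one stroke), whereas you unpack those two facts separately via Proposition~\ref{prop0}\textit{(1)} and Lemma~\ref{laboutd}; after that, both arguments conclude $\Phi_A(b)=q(b)$ and then $\Phi_A^{@}(b)=2q(b)-\Phi_A(b)=q(b)$ in the same way.
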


\begin{proof}
It is clear from Theorem \ref{theorem1}\textit{(1)} and (\ref{ineq}) that,
for all $b\in G_{\Phi _{A}},$ $\Phi _{A}(b)=q(b)$; thus $\Phi
_{A}^{@}(b)=\left\lfloor b,b\right\rfloor -\Phi _{A}(b)=q(b)$, so $G_{\Phi
_{A}}\subset \mathcal{P}_{q}(\Phi _{A}^{@}).$ The opposite inclusion also
holds, according to Theorem \ref{theorem1}\textit{(1).}
\end{proof}

\begin{corollary}
Let $A$ be a $q$-positive subset of an SSD space $B$. If $\Phi _{A}\in 
\mathcal{PC}_{q}(B)$, then%
\begin{equation*}
G_{\Phi _{A}}=\mathcal{P}_{q}(\Phi _{A}^{@}).
\end{equation*}
\end{corollary}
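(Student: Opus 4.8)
The plan is to deduce this corollary directly from Proposition~\ref{prop5}. The hypothesis of that proposition is the inequality $\Phi_A(b)\ge q(b)$ for all $b\in conv^wA$, and its conclusion is exactly the identity $G_{\Phi_A}=\mathcal{P}_q(\Phi_A^{@})$ that we want. So the only work is to show that the assumption $\Phi_A\in\mathcal{PC}_q(B)$ implies $\Phi_A(b)\ge q(b)$ on $conv^wA$; but this is immediate, since $\Phi_A\in\mathcal{PC}_q(B)$ means precisely that $\Phi_A\ge q$ on all of $B$ (the condition defining $\mathcal{PC}_q(B)$ from the Preliminaries), hence in particular on the subset $conv^wA$.

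First I would note that $A$ is $q$-positive and nonempty, so $\Phi_A$ is a well-defined proper $w(B,B)$-lsc convex function, and the statement $\Phi_A\in\mathcal{PC}_q(B)$ is meaningful. Then I would observe that by definition of $\mathcal{PC}_q(B)$ we have $\Phi_A(b)\ge q(b)$ for every $b\in B$, so in particular (\ref{ineq}) holds with $A$ in place of the generic set, i.e. $\Phi_A(b)\ge q(b)$ for all $b\in conv^wA$. Finally, applying Proposition~\ref{prop5} with this verified hypothesis yields $G_{\Phi_A}=\mathcal{P}_q(\Phi_A^{@})$, which is the desired conclusion.

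There is essentially no obstacle here: the corollary is a one-line specialization of Proposition~\ref{prop5}, trading the hypothesis ``$\Phi_A\ge q$ on $conv^wA$'' for the stronger but more transparent hypothesis ``$\Phi_A\ge q$ everywhere,'' which is the statement $\Phi_A\in\mathcal{PC}_q(B)$. If anything, the only thing worth spelling out is that $conv^wA\subseteq B$, so that the weaker hypothesis of Proposition~\ref{prop5} is genuinely implied; this is trivial. The proof is therefore a two-sentence reduction and requires no new calculation.
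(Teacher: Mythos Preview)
Your proposal is correct and matches the paper's intended argument exactly: the corollary is stated without proof immediately after Proposition~\ref{prop5}, precisely because the hypothesis $\Phi_A\in\mathcal{PC}_q(B)$ (i.e.\ $\Phi_A\ge q$ on all of $B$) trivially implies the weaker hypothesis $\Phi_A\ge q$ on $conv^wA$ required there.
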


\begin{proposition}
\label{prop6} Let $A$ be a $q$-representable subset of an SSD space $B$. If $%
\Phi _{A}(b)\geq q(b)$ for all $b\in conv^{w}A$, then 
\begin{equation*}
A=G_{\Phi _{A}}.
\end{equation*}
\end{proposition}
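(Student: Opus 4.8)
The plan is to identify both $A$ and $G_{\Phi _{A}}$ with the single set $\mathcal{P}_{q}(\Phi _{A}^{@})$ and then conclude by transitivity. First I would note that since $A$ is $q$-representable it is in particular $q$-positive, so Proposition \ref{prop0} applies to it. Part \textit{(2)} of that proposition gives $A\subset \mathcal{P}_{q}(\Phi _{A}^{@})$ for free (indeed $\Phi _{A}^{@}(a)=q(a)$ for all $a\in A$), while the $q$-representability of $A$ together with Proposition \ref{prop0}\textit{(4)} gives the reverse inclusion $\mathcal{P}_{q}(\Phi _{A}^{@})\subset A$. Hence $A=\mathcal{P}_{q}(\Phi _{A}^{@})$.

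Next I would invoke Proposition \ref{prop5}: its hypothesis is exactly the assumption of the present proposition, namely $\Phi _{A}(b)\geq q(b)$ for all $b\in conv^{w}A$, and its conclusion is $G_{\Phi _{A}}=\mathcal{P}_{q}(\Phi _{A}^{@})$. Combining the two displayed equalities yields $A=\mathcal{P}_{q}(\Phi _{A}^{@})=G_{\Phi _{A}}$, which is the claim.

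There is essentially no obstacle here once the earlier results are in place; the only point requiring a moment's care is that the identification $A=\mathcal{P}_{q}(\Phi _{A}^{@})$ genuinely uses both halves of Proposition \ref{prop0} — the inclusion $A\subset \mathcal{P}_{q}(\Phi _{A}^{@})$ holds for every $q$-positive set, but the opposite inclusion is precisely where the $q$-representability hypothesis on $A$ enters. One could alternatively route the argument through Theorem \ref{theorem1}\textit{(1)}, which already sandwiches $\mathcal{P}_{q}(\Phi _{A}^{@})$ between $A$ and $G_{\Phi _{A}}$ and in fact locates $G_{\Phi _{A}}$ inside $A^{\pi }\cap conv^{w}A$, but appealing to Proposition \ref{prop5} directly is the shortest path.
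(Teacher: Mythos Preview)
Your proof is correct and follows essentially the same route as the paper: first establish $A=\mathcal{P}_{q}(\Phi _{A}^{@})$ from the $q$-representability hypothesis, then invoke Proposition~\ref{prop5} to conclude $G_{\Phi _{A}}=\mathcal{P}_{q}(\Phi _{A}^{@})=A$. The only cosmetic difference is that you obtain $A=\mathcal{P}_{q}(\Phi _{A}^{@})$ by citing Proposition~\ref{prop0}\textit{(2)} and \textit{(4)} directly, whereas the paper unpacks the definition of $q$-representability and uses Proposition~\ref{prop0}\textit{(3)} to compare the representing function with $\Phi _{A}^{@}$; your version is in fact slightly more economical.
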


\begin{proof}
Since $A$ is a $q$-representable set, $A=\mathcal{P}_{q}(f)$ for some $%
w\left( B,B\right) $-lsc $f\in \mathcal{PC}_{q}(B).$ By Proposition \ref%
{prop0}\textit{(3)}, $f\leq \Phi _{A}^{@};$ hence, by Corollary \ref{incl}, $%
\mathcal{P}_{q}(f)\supset \mathcal{P}_{q}(\Phi _{A}^{@})\supset A=\mathcal{P}%
_{q}(f)$, so that $A=\mathcal{P}_{q}(\Phi _{A}^{@})$. The result follows by
applying Proposition \ref{prop5}.
\end{proof}

\begin{lemma}
\label{lemma2}Let $A$ be a $q$-positive subset of an SSD space $B$. If for
some topological vector space $Y$ there exists a $w\left( B,B\right) $%
-continuous linear mapping $f:B\rightarrow Y$ satisfying

(1) $f(A)$ is convex and closed,

(2) $f(x)=0$ implies $q(x)=0$,

\noindent then%
\begin{equation}
\Phi _{A}(b)\geq q(b)\text{\qquad }\forall \,b\in conv^{w}A.
\label{ineq_on_hull}
\end{equation}
\end{lemma}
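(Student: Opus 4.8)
The plan is to reduce the claimed inequality to the elementary identity $\Phi_A(b)=q(b)-\inf_{a\in A}q(b-a)$ that is built into the definition of $\Phi_A$: once we exhibit, for each $b\in conv^{w}A$, some $a_0\in A$ with $q(b-a_0)=0$, we immediately get $\inf_{a\in A}q(b-a)\le 0$ and hence $\Phi_A(b)\ge q(b)$. So the whole task is to find such an $a_0$, and this is exactly what the linear map $f$ is for.

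First I would track the image of $b$ under $f$. Since $f$ is linear, $f(convA)=conv\,f(A)$, and as $f(A)$ is convex this is just $f(A)$. Since $f$ is $w(B,B)$-continuous and $f(A)$ is closed in $Y$, the image of any $w(B,B)$-convergent net drawn from $convA$ converges in $Y$ to a point of $\overline{f(convA)}=\overline{f(A)}=f(A)$; hence $f(conv^{w}A)\subseteq f(A)$. (The same conclusion holds under the alternative reading $conv^{w}A=conv(A^{w})$, because $f(A^{w})\subseteq\overline{f(A)}=f(A)$ and then $f(conv(A^{w}))=conv\,f(A^{w})\subseteq f(A)$.) Consequently, given $b\in conv^{w}A$, there is $a_0\in A$ with $f(b)=f(a_0)$.

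It then remains to apply hypothesis (2): from $f(b-a_0)=f(b)-f(a_0)=0$ we get $q(b-a_0)=0$, so that $\inf_{a\in A}q(b-a)\le q(b-a_0)=0$ and therefore
\[
\Phi_A(b)=q(b)-\inf_{a\in A}q(b-a)\ge q(b).
\]
The only delicate step is the passage from $convA$ to its weak closure, and it is precisely there that both halves of hypothesis (1) are used: $w(B,B)$-continuity of $f$ is what lets the closure commute past $f$, while convexity together with closedness of $f(A)$ is what forces $\overline{f(convA)}=f(A)$. Neither a purely algebraic computation on finite convex combinations, nor the mere lower semicontinuity of $\Phi_A$, would by itself yield the closed-hull statement; once this point is granted, the argument presents no further obstacle.
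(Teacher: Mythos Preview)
Your proof is correct and follows essentially the same route as the paper: you show $f(conv^{w}A)\subset f(A)$ via continuity of $f$ together with the convexity and closedness of $f(A)$, then pick $a_0\in A$ with $f(b)=f(a_0)$ and invoke hypothesis (2) to get $q(b-a_0)=0$, which gives $\Phi_A(b)\ge q(b)$. The paper compresses the first step into the single chain $f(A)\subset f(conv^{w}A)\subset\overline{conv}\,f(A)=f(A)$, but the content is identical.
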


\begin{proof}
Since 
\begin{equation*}
f(A)\subset f\left( conv^{w}A\right) \subset \overline{conv}f(A)=f(A),
\end{equation*}%
it follows that 
\begin{equation*}
f\left( conv^{w}A\right) =f(A).
\end{equation*}%
Let $b\in conv^{w}A$. Then there exists $a\in A$ such that $f(b)=f(a)$,
hence $f(a-b)=0$. By \textit{2,} $q(a-b)=0$, and so we obviously have (\ref%
{ineq_on_hull}).
\end{proof}

\begin{corollary}
Let $T:X\rightrightarrows X^{\ast }$ be a representable monotone operator on
a Banach space $X$. If $DomT\,(RanT)$ is convex and closed, then 
\begin{equation*}
T=G_{\varphi _{T}}.
\end{equation*}
\end{corollary}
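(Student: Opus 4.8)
The plan is to derive this corollary from Proposition \ref{prop6} and Lemma \ref{lemma2} by translating the statement into the language of SSD spaces. First I would recall the standard construction: for a Banach space $X$, one takes $B=X\times X^{\ast}$ with the symmetric bilinear form $\lfloor(x,x^{\ast}),(y,y^{\ast})\rfloor:=\langle x,y^{\ast}\rangle+\langle y,x^{\ast}\rangle$, so that $q(x,x^{\ast})=\langle x,x^{\ast}\rangle$. Under this identification a monotone operator $T$ corresponds to a $q$-positive set $A=\operatorname{graph}T\subset B$, representability of $T$ corresponds to $q$-representability of $A$, the Fitzpatrick function $\varphi_{T}$ is exactly $\Phi_{A}$, and $G_{\varphi_{T}}$ is $G_{\Phi_{A}}$. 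So the conclusion $T=G_{\varphi_{T}}$ is the assertion $A=G_{\Phi_{A}}$, which is precisely the conclusion of Proposition \ref{prop6}.

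Next I would verify the two hypotheses of Proposition \ref{prop6}. Since $T$ is representable, $A$ is $q$-representable by the dictionary above. The remaining hypothesis is the inequality $\Phi_{A}(b)\geq q(b)$ for all $b\in\operatorname{conv}^{w}A$, and this is exactly the conclusion of Lemma \ref{lemma2}. To invoke that lemma I must exhibit a $w(B,B)$-continuous linear map $f:B\rightarrow Y$ into some topological vector space $Y$ with $f(A)$ convex and closed and with $f(x)=0\Rightarrow q(x)=0$. The natural choice when $\operatorname{Dom}T$ is convex and closed is the projection $f:X\times X^{\ast}\rightarrow X$, $f(x,x^{\ast})=x$, taking $Y=X$; then $f(A)=\operatorname{Dom}T$ is convex and closed by hypothesis, and $f(x,x^{\ast})=0$ means $x=0$, whence $q(x,x^{\ast})=\langle 0,x^{\ast}\rangle=0$. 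When $\operatorname{Ran}T$ is convex and closed, one instead uses the projection onto $X^{\ast}$, with $Y=X^{\ast}$; here $f(x,x^{\ast})=x^{\ast}=0$ again forces $q(x,x^{\ast})=0$. In either case condition (2) of Lemma \ref{lemma2} holds trivially because $q$ only pairs the two coordinates against each other.

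The one genuine point requiring care is the continuity of these projections with respect to the weak topology $w(B,B)$ induced by the self-dual pairing, rather than the usual weak or weak$^{\ast}$ topology — and, correspondingly, that $f(A)$ is closed in the right topology for Lemma \ref{lemma2} to apply and for ``conv$^{w}$'' to mean what we want. In fact $w(B,B)$ on $X\times X^{\ast}$ is the product of the weak topology on $X$ and the weak$^{\ast}$ topology on $X^{\ast}$, the coordinate projections are continuous for it into $(X,w(X,X^{\ast}))$ and $(X^{\ast},w(X^{\ast},X))$ respectively, and a convex norm-closed subset of $X$ is weakly closed, so $f(\operatorname{graph}T)=\operatorname{Dom}T$ is indeed closed in the relevant topology (similarly $\operatorname{Ran}T$ is weak$^{\ast}$-closed when it is convex and norm-closed, using reflexivity or, more carefully, the hypothesis as stated). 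I expect this topological bookkeeping to be the main obstacle; once it is settled, the corollary is immediate: Lemma \ref{lemma2} gives $\Phi_{A}(b)\geq q(b)$ on $\operatorname{conv}^{w}A$, and then Proposition \ref{prop6} yields $A=G_{\Phi_{A}}$, i.e.\ $T=G_{\varphi_{T}}$.
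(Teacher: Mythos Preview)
Your proposal is correct and follows essentially the same approach as the paper: apply Lemma \ref{lemma2} with the projection $P_{X}$ (respectively $P_{X^{\ast}}$) and then invoke Proposition \ref{prop6}. The paper's proof is a terse version of exactly this argument, including the same observation that $w(B,B)$ makes $P_{X}$ continuous into $(X,w(X,X^{\ast}))$ and $P_{X^{\ast}}$ continuous into $(X^{\ast},w(X^{\ast},X))$; your more explicit discussion of why the image is closed in the target topology (convex norm-closed $\Rightarrow$ weakly closed in $X$, with the $\operatorname{Ran}T$ case needing a word about weak$^{\ast}$-closedness) is a useful elaboration that the paper leaves implicit.
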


\begin{proof}
Take $f=P_{X}$ or $f=P_{X^{\ast }},$ the projections onto $X$ and $X^{\ast
}, $ respectively, in Lemma \ref{lemma2} and apply Proposition \ref{prop6}.
Notice that when $X\times X^{\ast }$ is endowed with the topology $w(X\times
X^{\ast },X^{\ast }\times X)$, $P_{X}$ and $P_{X^{\ast }}$ are continuous
onto $X$ with its weak topology and $X^{\ast }$ with the weak$^{\ast }$
topology, respectively.
\end{proof}

\subsection{Maximally $q$-positive convex sets}

The following result extends \cite[Lemma 1.5]{MAS09} (see also \cite[Thm. 4.1%
]{BWY08}).

\begin{theorem}
\label{15}Let $A$ be a maximally $q$-positive convex set in an SSD space $B.$
Then $A$ is actually affine.
\end{theorem}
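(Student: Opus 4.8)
The plan is to show that if $A$ is a maximally $q$-positive convex set, then for any $a\in A$ the set $A-a$ is closed under negation, which (together with convexity) forces $A-a$ to be a linear subspace and hence $A$ affine. Fix $a_0\in A$ and, after translating, assume $a_0=0\in A$; note that translations preserve both $q$-positivity (since $q$ depends only on differences) and convexity, so this is harmless. It then suffices to prove that $A=-A$: indeed, a convex set $C$ with $0\in C$ and $C=-C$ is a subspace, because for $x\in C$ we have $2x = x - (-x)$, so by convexity $x = \tfrac12 x + \tfrac12 x$ stays in $C$ while $\tfrac12(x+y)\in C$ gives closure under addition after rescaling, and closure under negation gives the rest; more carefully, $C$ convex, symmetric, containing $0$ implies $C$ is closed under the operations making it a subspace once one checks it is a cone, which follows from $x\in C \Rightarrow nx\in C$ by an inductive convexity argument using symmetry.

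The heart of the matter is therefore: given $b\in A$, show $-b\in A$. By maximal $q$-positivity it is enough to check that $-b$ is $q$-positively related to $A$, i.e. $q(-b-c)\ge 0$ for every $c\in A$. Here I would exploit convexity of $A$ to bring in the midpoint $\tfrac12(b+c)\in A$ (using $0\in A$ one also has $\tfrac12 b,\tfrac12 c\in A$, and more generally all convex combinations). The key computation is to expand $q$ on such combinations: since $q(x)=\tfrac12\lfloor x,x\rfloor$ is a quadratic form with associated bilinear form $\lfloor\cdot,\cdot\rfloor$, one has the parallelogram-type identity
\begin{equation*}
q(x+y) + q(x-y) = 2q(x) + 2q(y),
\end{equation*}
and for $s\in A$, $t\in A$ the $q$-positivity of $A$ gives $q(s-t)\ge 0$. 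Applying this with cleverly chosen elements of $A$ built from $b$ and $c$ — for instance $s=\lambda b$ type points obtained by taking convex combinations of $b$ with $0$, and likewise for $c$ — and letting the convex parameters run, I expect to derive that $\lfloor b,c\rfloor \le 0$ for all $c\in A$, or some inequality of this shape; combined with $q(b)\ge 0$ (from $b,0\in A$) and $q(c)\ge 0$, this should yield $q(-b-c) = q(b+c) = q(b) + q(c) + \lfloor b,c\rfloor$, and one needs this to be $\ge 0$.

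The main obstacle is precisely controlling the cross term $\lfloor b,c\rfloor$: $q$-positivity only gives $q(b-c)=q(b)+q(c)-\lfloor b,c\rfloor\ge 0$, i.e. an \emph{upper} bound $\lfloor b,c\rfloor \le q(b)+q(c)$, which is the wrong direction for concluding $q(b+c)\ge 0$. The trick must be to use convexity more aggressively: for $\lambda\in[0,1]$ the point $\lambda b\in A$ (convex combination of $b$ and $0$) and $\mu c\in A$, so $q(\lambda b-\mu c)\ge 0$ for all $\lambda,\mu\in[0,1]$; expanding gives $\lambda^2 q(b) - \lambda\mu\lfloor b,c\rfloor + \mu^2 q(c)\ge 0$ on the whole square $[0,1]^2$. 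Taking $\mu=1$ and letting $\lambda\to 0^+$ forces $q(c)\ge 0$ (already known), but taking $\lambda=\mu\to 0^+$ after dividing by $\lambda^2$ gives $q(b) - \lfloor b,c\rfloor + q(c)\ge 0$, i.e. exactly $q(b-c)\ge 0$ again. To get the reverse, I would instead apply $q$-positivity to $b$ and a point on the \emph{far} side — but $A$ need not contain $-b$, which is the very thing we want. The resolution should be to test against $c' = $ a convex combination that overshoots, combined with an argument by contradiction: if $-b\notin A$, maximality gives some $c\in A$ with $q(-b-c)<0$; then examine the segment from $c$ to $b$ (inside $A$) and use that $q$ restricted to this segment is a quadratic in one variable which is $\ge 0$ at both endpoints differences but the computation at the midpoint or via the sign of the leading coefficient produces the contradiction. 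Making this one-variable quadratic argument precise — showing the discriminant condition forces $q(-b-c)\ge 0$ — is the step I expect to require the most care, and it is essentially where the full strength of "convex $+$ maximal" is used.
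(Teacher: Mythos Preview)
Your proposal has a genuine gap: you try to establish symmetry ($-b\in A$) directly, and you correctly diagnose the obstruction --- $q$-positivity of $A$ only yields $\lfloor b,c\rfloor \le q(b)+q(c)$, which is the wrong direction for $q(b+c)\ge 0$. The ``one-variable quadratic'' argument you sketch cannot repair this: testing only points $\lambda b$, $\mu c$ with $\lambda,\mu\in[0,1]$ never produces the point $b+c$, and it is precisely $b+c\in A$ (hence $q(b+c)=q((b+c)-0)\ge 0$) that is needed. Your side remark that ``convex, symmetric, $0\in C$ $\Rightarrow$ cone'' is also false as stated (take $C=[-1,1]$), so even if symmetry were in hand you would still need a separate cone argument.

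The missing idea, which the paper's proof supplies, is to reverse the order: first prove that $A$ (translated so $0\in A$) is a \emph{cone}, and only then deduce symmetry. For $\lambda\ge 1$ and $b\in A$, to show $\lambda b\in A$ one checks by maximality that $q(\lambda b - c)\ge 0$ for every $c\in A$; the key trick is the rescaling
\[
q(\lambda b - c)=\lambda^{2}\,q\bigl(b-\tfrac{1}{\lambda}c\bigr),
\]
and $\tfrac{1}{\lambda}c\in A$ by convexity (combining $c$ with $0$), so the right-hand side is $\ge 0$ by $q$-positivity of $A$. Once $A$ is a convex cone containing $0$, it is closed under addition ($b+c=2\cdot\tfrac12(b+c)\in A$), and then for any $b,c\in A$ one has $q(-b-c)=q((b+c)-0)\ge 0$, giving $-b\in A$ by maximality. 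Thus the cone step, via this rescaling, is exactly the ingredient your argument is missing.
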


\begin{proof}
Take $x_{0}\in A.$ Clearly, the set $A-x_{0}$ is also maximally $q$-positive
and convex. To prove that $A$ is affine, we will prove that $A-x_{0}$ is a
cone, that is,%
\begin{equation}
\lambda \left( x-x_{0}\right) \in A-x_{0}\text{\qquad for all }x\in A\text{
and }\lambda \geq 0,  \label{cone}
\end{equation}%
and that it is symmetric with respect to the origin, that is,%
\begin{equation}
-\left( x-x_{0}\right) \in A-x_{0}\text{\qquad for all }x\in A\text{.}
\label{symmetric}
\end{equation}%
Let $x\in A$ and $\lambda \geq 0.$ If $\lambda \leq 1,$ then $\lambda \left(
x-x_{0}\right) =\lambda x+\left( 1-\lambda \right) x_{0}-x_{0}\in A-x_{0},$
since $A$ is convex. If $\lambda \geq 1,$ for every $y\in A$ we have $%
q\left( \lambda \left( x-x_{0}\right) -\left( y-x_{0}\right) \right)
=\lambda ^{2}q\left( x-\left( \frac{1}{\lambda }\left( y-x_{0}\right)
+x_{0}\right) \right) \geq 0,$ since $\frac{1}{\lambda }\left(
y-x_{0}\right) \in A-x_{0}.$ Hence, as $A-x_{0}$ is maximally $q$-positive, $%
\lambda \left( x-x_{0}\right) \in A-x_{0}$ also in this case. This proves (%
\ref{cone}). To prove (\ref{symmetric}), let $x,y\in A.$ Then $q\left(
-\left( x-x_{0}\right) -\left( y-x_{0}\right) \right) =q\left( \left(
x+y-x_{0}\right) -x_{0}\right) \geq 0,$ since $x+y-x_{0}\in A$ (as $A-x_{0}$
is a convex cone) and $x_{0}\in A$. Using that $A-x_{0}$ is maximally $q$%
-positive, we conclude that $-\left( x-x_{0}\right) \in A-x_{0},$ which
proves (\ref{symmetric}).
\end{proof}

\subsection{About the number of maximally $q$-positive extensions of a $q$%
-positive set}

\begin{proposition}
\label{starshaped}Let $x_{1},x_{2}\in B$ be such that%
\begin{equation}
q(x_{1}-x_{2})\leq 0.  \label{three}
\end{equation}%
Then $\lambda x_{1}+\left( 1-\lambda \right) x_{2}\in \left\{
x_{1},x_{2}\right\} ^{\pi \pi }$ for every $\lambda \in \lbrack 0,1]$.
\end{proposition}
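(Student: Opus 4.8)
The plan is to unwind the double polar directly and reduce everything to an elementary identity for the quadratic form $q$. Fix $\lambda\in[0,1]$ and set $z:=\lambda x_{1}+(1-\lambda)x_{2}$. Since $\{x_{1},x_{2}\}^{\pi\pi}=\bigl(\{x_{1},x_{2}\}^{\pi}\bigr)^{\pi}$, it suffices to prove that $q(z-b)\geq 0$ for every $b\in\{x_{1},x_{2}\}^{\pi}$, that is, for every $b\in B$ with $q(b-x_{1})\geq 0$ and $q(b-x_{2})\geq 0$.

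First I would introduce the shifted vectors $u:=x_{1}-b$ and $v:=x_{2}-b$. Because $q(w)=q(-w)$, membership $b\in\{x_{1},x_{2}\}^{\pi}$ is equivalent to $q(u)\geq 0$ and $q(v)\geq 0$, while the standing hypothesis (\ref{three}) reads $q(u-v)=q(x_{1}-x_{2})\leq 0$. A one-line computation gives $z-b=\lambda u+(1-\lambda)v$, so the whole statement collapses to the algebraic claim: whenever $q(u)\geq 0$, $q(v)\geq 0$ and $q(u-v)\leq 0$, one has $q\bigl(\lambda u+(1-\lambda)v\bigr)\geq 0$ for all $\lambda\in[0,1]$.

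To settle this claim I would expand using the bilinearity and symmetry of $\lfloor\cdot,\cdot\rfloor$. From $q(u-v)=q(u)+q(v)-\lfloor u,v\rfloor$ we get
\begin{equation*}
\lfloor u,v\rfloor=q(u)+q(v)-q(u-v)\geq 0,
\end{equation*}
and then
\begin{equation*}
q\bigl(\lambda u+(1-\lambda)v\bigr)=\lambda^{2}q(u)+(1-\lambda)^{2}q(v)+\lambda(1-\lambda)\lfloor u,v\rfloor ,
\end{equation*}
which is a sum of three nonnegative terms since $\lambda,1-\lambda\geq 0$. Hence $q(z-b)\geq 0$, and as $b\in\{x_{1},x_{2}\}^{\pi}$ was arbitrary, $z\in\{x_{1},x_{2}\}^{\pi\pi}$.

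There is no genuinely hard step: the only points that require care are choosing the substitution $u,v$ that converts the membership statement into the scalar inequality, and noticing that the hypothesis $q(x_{1}-x_{2})\leq 0$ is exactly what forces the cross term $\lfloor u,v\rfloor$ to be nonnegative (without it the convex combination can fail to be $q$-positively related to the polar). The extreme cases $\lambda=0,1$ are in any event trivial, since $A\subset A^{\pi\pi}$ always holds.
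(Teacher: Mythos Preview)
Your proof is correct and is essentially the same as the paper's: both take an arbitrary $b$ (the paper's $x$) in $\{x_{1},x_{2}\}^{\pi}$, set $u=x_{1}-b$, $v=x_{2}-b$, expand $q(\lambda u+(1-\lambda)v)$ bilinearly, and use $q(u-v)\leq 0$ to control the cross term $\lfloor u,v\rfloor$. The only cosmetic difference is that you conclude directly from $\lfloor u,v\rfloor\geq 0$, whereas the paper uses the slightly weaker bound $\lfloor u,v\rfloor\geq q(u)+q(v)$ and then regroups to $\lambda q(u)+(1-\lambda)q(v)\geq 0$.
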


\begin{proof}
Let $x\in \left\{ x_{1},x_{2}\right\} ^{\pi }.$ Since%
\begin{equation*}
q(x_{1}-x_{2})=q\left( (x_{1}-x)-(x_{2}-x)\right) =q(x_{1}-x)-\left\lfloor
x_{1}-x,x_{2}-x\right\rfloor +q(x_{2}-x),
\end{equation*}%
(\ref{three}) implies that 
\begin{equation*}
\left\lfloor x_{1}-x,x_{2}-x\right\rfloor \geq q(x_{1}-x)+q(x_{2}-x).
\end{equation*}%
Then, writing $x_{\lambda }:=\lambda x_{1}+\left( 1-\lambda \right) x_{2}$,%
\begin{eqnarray*}
q\left( x_{\lambda }-x\right) &=&q\left( \lambda \left( x_{1}-x\right)
+\left( 1-\lambda \right) \left( x_{2}-x\right) \right) \\
&=&\lambda ^{2}q(x_{1}-x)+\lambda (1-\lambda )\left\lfloor
x_{1}-x,x_{2}-x\right\rfloor +(1-\lambda )^{2}q(x_{2}-x) \\
&\geq &\lambda ^{2}q(x_{1}-x)+\lambda (1-\lambda )\left(
q(x_{1}-x)+q(x_{2}-x)\right) \\
&&+(1-\lambda )^{2}q(x_{2}-x) \\
&=&\lambda q(x_{1}-x)+(1-\lambda )q(x_{2}-x)\geq 0.
\end{eqnarray*}
\end{proof}

\bigskip

We will use the following lemma:

\begin{lemma}
\label{third polar}Let $A\subset B.$ Then $A^{\pi \pi \pi }=A^{\pi }.$
\end{lemma}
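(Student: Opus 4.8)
The plan is to treat $A\mapsto A^{\pi}$ as (one half of) a Galois connection and invoke the standard ``triple polar'' identity. The two elementary facts I would record first are: (i) $S\subset S^{\pi\pi}$ for every $S\subset B$; and (ii) the operation is inclusion-reversing, i.e.\ $S\subset T$ implies $T^{\pi}\subset S^{\pi}$. Both follow straight from the definition of $A^{\pi}$ as the set of points $q$-positively related to $A$, together with the symmetry $q(x)=q(-x)$ coming from $q(b)=\tfrac12\lfloor b,b\rfloor$.

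For (i): fix $s\in S$. By definition every $b\in S^{\pi}$ satisfies $q(b-s)\geq 0$, hence $q(s-b)\geq 0$ for all $b\in S^{\pi}$, so $s$ is $q$-positively related to $S^{\pi}$, that is, $s\in(S^{\pi})^{\pi}=S^{\pi\pi}$. For (ii): if $S\subset T$ and $b\in T^{\pi}$, then $q(b-t)\geq 0$ for all $t\in T$, in particular for all $t\in S$, so $b\in S^{\pi}$.

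Now I would combine these. Applying (i) to $S=A$ gives $A\subset A^{\pi\pi}$; applying (ii) to this inclusion gives $(A^{\pi\pi})^{\pi}\subset A^{\pi}$, i.e.\ $A^{\pi\pi\pi}\subset A^{\pi}$. Applying (i) instead to $S=A^{\pi}$ gives $A^{\pi}\subset(A^{\pi})^{\pi\pi}=A^{\pi\pi\pi}$. The two inclusions together yield $A^{\pi\pi\pi}=A^{\pi}$.

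There is essentially no obstacle here: the only point needing a moment's care is that ``$b$ is $q$-positively related to $S$'' and ``every element of $S$ is $q$-positively related to $\{b\}$'' express the same condition, which is immediate from $q(x)=q(-x)$; the rest is the purely formal consequence of extensivity and antitonicity that holds in any Galois connection.
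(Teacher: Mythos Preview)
Your proof is correct and follows essentially the same approach as the paper: both arguments establish the extensivity $S\subset S^{\pi\pi}$ (from the evenness of $q$) and the antitonicity of $S\mapsto S^{\pi}$, then combine these two facts in the standard Galois-connection way to obtain both inclusions. The only difference is that you spell out the verification of (i) and (ii) more explicitly, whereas the paper states them directly.
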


\begin{proof}
Since $q$ is an even function, from the definition of $A^{\pi }$ it follows
that $A\subset A^{\pi \pi }.$ Replacing $A$ by $A^{\pi }$ in this inclusion,
we get $A^{\pi }\subset A^{\pi \pi \pi }.$ On the other hand, since the
mapping $A\longmapsto A^{\pi }$ is inclusion reversing, from $A\subset
A^{\pi \pi }$ we also obtain $A^{\pi \pi \pi }\subset A^{\pi }.$
\end{proof}

\begin{proposition}
Let $A$ be a $q$-positive set. If $A$ has more than one maximally $q$%
-positive extension, then it has a continuum of such extensions.
\end{proposition}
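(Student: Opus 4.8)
The plan is to show that if $A$ has two distinct maximally $q$-positive extensions $M_1 \neq M_2$, then one can produce uncountably many pairwise distinct maximally $q$-positive extensions. First I would pick a point $m_1 \in M_1 \setminus M_2$; since $m_1 \notin M_2$ and $M_2$ is maximal, $m_1$ is not $q$-positively related to $M_2$, so there is $m_2 \in M_2$ with $q(m_1 - m_2) < 0$. Note $m_2 \in M_2 \supset A$ and $m_1 \in M_1 \supset A$, so both $m_1$ and $m_2$ lie in $A^{\pi}$ (each extension lies in $A^{\pi}$, hence every element of each extension is $q$-positively related to $A$); thus $m_1, m_2 \in A^{\pi}$ while $q(m_1 - m_2) \le 0$. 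This is the configuration that feeds Proposition \ref{starshaped}.

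Next I would exploit the segment between $m_1$ and $m_2$. By Proposition \ref{starshaped} applied to the pair $\{m_1, m_2\}$, each point $x_\lambda := \lambda m_1 + (1-\lambda) m_2$ lies in $\{m_1,m_2\}^{\pi\pi}$. Since $m_1, m_2 \in A^\pi$, we have $A \subset \{m_1,m_2\}^\pi$, hence $\{m_1,m_2\}^{\pi\pi} \subset A^\pi$ (the map $S \mapsto S^\pi$ is inclusion-reversing), so in fact $x_\lambda \in A^\pi$ for every $\lambda \in [0,1]$. Moreover, for $\lambda, \mu \in [0,1]$ one computes, using bilinearity, $q(x_\lambda - x_\mu) = (\lambda - \mu)^2 q(m_1 - m_2) \le 0$; thus any two points of the segment are "mutually $q$-positively related downwards" in the sense of \eqref{three}. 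The key consequence I want is: for each $\lambda \in [0,1]$, the singleton-augmented set $A \cup \{x_\lambda\}$ is $q$-positive — indeed $x_\lambda \in A^\pi$ handles the cross terms, and $q(x_\lambda - x_\lambda) = 0$ handles the diagonal — so by a Zorn's lemma argument it extends to some maximally $q$-positive set $N_\lambda \supset A \cup \{x_\lambda\}$.

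Finally I would argue that the family $\{N_\lambda : \lambda \in [0,1]\}$ cannot be countable. Suppose it were; then some single maximal extension $N$ equals $N_\lambda$ for uncountably many $\lambda$, in particular for two distinct values $\lambda \neq \mu$, so $x_\lambda, x_\mu \in N$. But $N$ is $q$-positive, forcing $q(x_\lambda - x_\mu) \ge 0$, while we computed $q(x_\lambda - x_\mu) = (\lambda-\mu)^2 q(m_1 - m_2) < 0$ (using $q(m_1-m_2) < 0$ strictly, which we may arrange: if every $m_2 \in M_2$ with $m_1 \notin$-relation gave equality we would need to re-choose, but $m_1 \notin M_2$ maximal guarantees strict failure of $q$-positive relatedness to at least one point of $M_2$). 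This contradiction shows the family is uncountable. Since $\lambda \mapsto x_\lambda$ is injective on $[0,1]$ and distinct $N_\lambda$'s are forced by the same computation whenever $x_\lambda \neq x_\mu$ both lie in a common extension, we actually get a continuum of distinct maximal extensions. The main obstacle is the bookkeeping around strictness: one must be careful to extract a pair $m_1 \in M_1$, $m_2 \in M_2$ with $q(m_1 - m_2)$ \emph{strictly} negative rather than merely $\le 0$, and to confirm that the resulting $N_\lambda$ genuinely contains $x_\lambda$ so that the final contradiction bites; everything else is routine bilinear algebra and Zorn's lemma.
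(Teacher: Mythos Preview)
Your proof is correct and follows essentially the same approach as the paper's: pick $m_1,m_2$ with $q(m_1-m_2)<0$ from two distinct maximal extensions, use Proposition~\ref{starshaped} to place the whole segment $\{x_\lambda\}$ in $A^\pi$, and then observe that $q(x_\lambda-x_\mu)=(\lambda-\mu)^2q(m_1-m_2)<0$ forces the Zorn-produced maximal extensions $N_\lambda$ to be pairwise distinct. The only cosmetic difference is that the paper reaches $x_\lambda\in A^\pi$ via Lemma~\ref{third polar} (i.e.\ $\{m_1,m_2\}^{\pi\pi}\subset A^{\pi\pi\pi}=A^\pi$), while you argue the equivalent inclusion directly from $A\subset\{m_1,m_2\}^\pi$; your detour through ``uncountable'' and the hedging about strictness are unnecessary, since $m_1\notin M_2$ with $M_2$ maximal already gives $q(m_1-m_2)<0$ strictly and hence $\lambda\mapsto N_\lambda$ is injective outright.
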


\begin{proof}
Let $M_{1}$, $M_{2}$ be two different maximally $q$-positive extensions of $%
A $. By the maximality of $M_{1}$ and $M_{2}$, there exists $x_{1}\in M_{1}$
and $x_{2}\in M_{2}$ such that $q(x_{1}-x_{2})<0.$ Notice that $\left\{
x_{1},x_{2}\right\} \subset A^{\pi };$ hence, using proposition \ref%
{starshaped} and Lemma \ref{third polar}, we deduce that, for every $\lambda
\in \lbrack 0,1],$ $\lambda x_{1}+\left( 1-\lambda \right) x_{2}\in \left\{
x_{1},x_{2}\right\} ^{\pi \pi }\subset A^{\pi \pi \pi }=A^{\pi }.$ This
shows that, for each $\lambda \in \lbrack 0,1]$, $A\cup \{x_{\lambda }\},$
with $x_{\lambda }:=\lambda x_{1}+\left( 1-\lambda \right) x_{2}$, is a $q$%
-positive extension of $A$; since $q(x_{\lambda _{1}}-x_{\lambda
_{2}})=q\left( \left( \lambda _{1}-\lambda _{2}\right) \left(
x_{1}-x_{2}\right) \right) =\left( \lambda _{1}-\lambda _{2}\right)
^{2}q\left( x_{1}-x_{2}\right) <0$ for all $\lambda _{1},\lambda _{2}\in
\lbrack 0,1]$ with $\lambda _{1}\neq \lambda _{2}$, the result follows using
Zorn's Lemma.
\end{proof}

\subsection{Premaximally $q$-positive sets}

Let $\left( B,\lfloor \cdot ,\cdot \rfloor \right) $ be an SSD space.

\begin{definition}
\label{101}Let $P$ be a $q$--positive subset of $B$. We say that $P$ is 
\textsl{premaximally $q$--positive} if there exists a unique maximally $q$%
--positive superset of $P$. It follows from \cite[Lemma 5.4]{Si07} that this
superset is $P^{\pi }$ (which is identical with $P^{\pi \pi }$). The same
reference also implies that \textsl{%
\begin{equation}
P\ \hbox{is premaximally}\ q\hbox{--positive}\iff P^{\pi }\ \hbox{is}\ q%
\hbox{--positive}.  \label{201}
\end{equation}%
}
\end{definition}

\begin{lemma}
\label{102}Let $P$ be a $q$--positive subset of $B$ and 
\begin{equation}
\Phi _{P}\geq q\ \hbox{on}\ B.  \label{202}
\end{equation}%
Then $P$ is premaximally $q$--positive and $P^{\pi }=\mathcal{P}_{q}(\Phi
_{P})$.
\end{lemma}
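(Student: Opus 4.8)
\textbf{Proof plan for Lemma \ref{102}.}
The plan is to derive the conclusion from the characterization \eqref{201} together with the relationship between $\Phi_P$, $\Phi_P^@$, and the polar $P^\pi$. First I would observe that the hypothesis \eqref{202} says exactly that $q$ is majorized by $\Phi_P$ on all of $B$; combined with Proposition \ref{prop0}\textit{(1)}, which always gives $\Phi_P \le \Phi_P^@$ and $q \le \Phi_P^@$, and with the identity $\Phi_P^@(b) = \lfloor b,b\rfloor - \Phi_P(b) = 2q(b) - \Phi_P(b)$ whenever one wants an upper estimate, I expect to pin down $\Phi_P^@$ and $\mathcal{P}_q(\Phi_P^@)$ in terms of $\mathcal{P}_q(\Phi_P)$. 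Concretely, the key computation is: for $b \in B$, using \eqref{mu}, $b \in P^\pi \iff \Phi_P(b) \le q(b)$, and under \eqref{202} this forces $\Phi_P(b) = q(b)$, i.e. $P^\pi = \mathcal{P}_q(\Phi_P)$. That already gives the second assertion of the lemma, modulo checking that $\Phi_P \in \mathcal{PC}_q(B)$, which is immediate from \eqref{202} and the fact (noted in the Preliminaries) that $\Phi_P$ is always a proper $w(B,B)$-lsc convex function.

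Next I would prove that $P$ is premaximally $q$-positive. By \eqref{201} it suffices to show $P^\pi$ is $q$-positive. Take $b,c \in P^\pi$; by the previous paragraph $\Phi_P(b) = q(b)$ and $\Phi_P(c) = q(c)$. I want $q(b-c) \ge 0$. The natural route is to play $\Phi_P$ against $\Phi_P^@$: since $\Phi_P(b) = q(b)$, the Fenchel–Young-type bound gives $\Phi_P^@(b) = 2q(b) - \Phi_P(b) = q(b)$, so $b \in \mathcal{P}_q(\Phi_P^@)$, and likewise $c \in \mathcal{P}_q(\Phi_P^@)$. By Proposition \ref{prop0}\textit{(2)}–\textit{(4)} applied to the $q$-positive set $P$, or more directly by Theorem \ref{theorem1}\textit{(1)}, $\mathcal{P}_q(\Phi_P^@) \subset G_{\Phi_P} \subset P^\pi \cap conv^w P$; but here I actually want the reverse direction, that $\mathcal{P}_q(\Phi_P^@)$ is itself $q$-representable hence $q$-positive. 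Indeed, by Corollary \ref{q-repr hull}, $\mathcal{P}_q(\Phi_P^@)$ is always $q$-representable, hence $q$-positive; so it suffices to show $P^\pi \subset \mathcal{P}_q(\Phi_P^@)$. That inclusion is exactly what the displayed computation gives: $b \in P^\pi \Rightarrow \Phi_P(b) = q(b) \Rightarrow \Phi_P^@(b) = q(b) \Rightarrow b \in \mathcal{P}_q(\Phi_P^@)$. Combined with $\mathcal{P}_q(\Phi_P^@) \subset P^\pi$ (immediate since $\Phi_P^@ \ge q$ everywhere and on $\mathcal{P}_q(\Phi_P^@)$ we get $\Phi_P \le \Phi_P^@ = q$, i.e. membership in $P^\pi$ by \eqref{mu}), we get $P^\pi = \mathcal{P}_q(\Phi_P^@)$, which is $q$-positive. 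By \eqref{201}, $P$ is premaximally $q$-positive, and by Definition \ref{101} its unique maximally $q$-positive superset is $P^\pi$.

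Finally, assembling the pieces: $P^\pi = \mathcal{P}_q(\Phi_P)$ was shown in the first paragraph, and $P^\pi$ is the unique maximal extension by the second paragraph, so both conclusions of the lemma hold. I expect the main obstacle to be organizing the chain of equalities $P^\pi = \mathcal{P}_q(\Phi_P) = \mathcal{P}_q(\Phi_P^@)$ cleanly — in particular making sure the identity $\Phi_P^@(b) = 2q(b) - \Phi_P(b)$ is used only where it is legitimate (it requires $\Phi_P(b) < +\infty$, which holds on $P^\pi$ since there $\Phi_P(b) = q(b)$), and correctly invoking \eqref{201} from \cite{Si07}, which is the external input doing the real work of converting "$P^\pi$ is $q$-positive" into "premaximally $q$-positive." Everything else is bookkeeping with the definitions of $\Phi$, $\Phi^@$, $\mathcal{P}_q$, and $(\cdot)^\pi$.
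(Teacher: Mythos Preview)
Your overall strategy is sound and would work, but the second paragraph takes an unnecessary detour through $\Phi_P^{@}$ that contains a genuine error. You write ``the Fenchel--Young-type bound gives $\Phi_P^{@}(b) = 2q(b) - \Phi_P(b) = q(b)$,'' and later call this ``the identity $\Phi_P^{@}(b) = 2q(b) - \Phi_P(b)$.'' This is not an identity: Fenchel--Young only gives $\Phi_P(b) + \Phi_P^{@}(b) \geq \lfloor b,b\rfloor$, hence $\Phi_P^{@}(b) \geq 2q(b) - \Phi_P(b)$. Equality would require the supremum defining $\Phi_P^{@}(b)$ to be attained at $c=b$, which you have no reason to expect for $b \in P^{\pi}\setminus P$. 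So your inclusion $P^{\pi} \subset \mathcal{P}_q(\Phi_P^{@})$ is not established.

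The good news is that this detour is entirely unnecessary. In your first paragraph you already obtained $P^{\pi} = \mathcal{P}_q(\Phi_P)$ and noted $\Phi_P \in \mathcal{PC}_q(B)$. The Preliminaries state that any set admitting a $q$-representation is $q$-positive \cite[Lemma 19.8]{Si08}; apply this directly to $f=\Phi_P$ to conclude $\mathcal{P}_q(\Phi_P)=P^{\pi}$ is $q$-positive, and then invoke \eqref{201}. No passage through $\Phi_P^{@}$ is needed. With that simplification your argument is correct.

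For comparison, the paper's own proof follows a somewhat different line: rather than appealing to the equivalence \eqref{201}, it takes an arbitrary maximally $q$-positive superset $M\supset P$, observes $M\subset M^{\pi}\subset P^{\pi}$, and then uses \eqref{mu} together with \eqref{202} to get $M\subset \mathcal{P}_q(\Phi_P)$. Since $\mathcal{P}_q(\Phi_P)$ is $q$-positive (same fact from the Preliminaries) and $M$ is maximal, $M=\mathcal{P}_q(\Phi_P)$; uniqueness of the maximal extension follows, and $P^{\pi}=\mathcal{P}_q(\Phi_P)$ drops out. Both routes ultimately rest on the $q$-positivity of $\mathcal{P}_q(\Phi_P)$; the paper uses it to pin down every maximal extension directly, while your (corrected) route uses it to verify the hypothesis of \eqref{201}.
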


\begin{proof}
Suppose that $M$ is a maximally $q$--positive subset of $B$ and $M\supset P$%
. Let $b\in M$. Since $M$ is $q$--positive, $b\in M^{\pi }\subset P^{\pi }$,
thus $\Phi _{P}(b)\leq q(b)$. Combining this with (\ref{202}), $\Phi
_{P}(b)=q(b)$, and so $b\in \mathcal{P}_{q}(\Phi _{P})$. Thus we have proved
that $M\subset \mathcal{P}_{q}(\Phi _{P})$. It now follows from the
maximality of $M$ and the $q$--positivity of $\mathcal{P}_{q}(\Phi _{P})$
that $P^{\pi }=\mathcal{P}_{q}(\Phi _{P})$.
\end{proof}

\bigskip

The next result contains a partial converse to Lemma \ref{102}.

\begin{lemma}
\label{103}Let $P$ be a premaximally $q$--positive subset of $B$. Then
either (\ref{202}) is true, or $P^{\pi }=\hbox{\rm dom}\,\Phi _{P}$ and $%
P^{\pi }$ is an affine subset of $B$.
\end{lemma}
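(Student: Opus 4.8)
The plan is to analyze what happens when \eqref{202} fails, i.e.\ when there exists $b_0\in B$ with $\Phi_P(b_0)<q(b_0)$. By the characterization \eqref{mu}, such a $b_0$ lies in $P^\pi$, so by premaximality $b_0$ belongs to the unique maximal $q$-positive superset $M:=P^\pi=P^{\pi\pi}$, and by \eqref{equation2} applied to $M$ we have $\Phi_M(b_0)=q(b_0)$; combined with $\Phi_P\le\Phi_M$ (since $P\subset M$) this gives the strict inequality $\Phi_P(b_0)<\Phi_M(b_0)$. So I would fix one such point $b_0$ with $\Phi_P(b_0)<q(b_0)$ and aim to show two things: first that $P^\pi=\hbox{\rm dom}\,\Phi_P$, and second that $P^\pi$ is affine.

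For the domain claim, the inclusion $\hbox{\rm dom}\,\Phi_P\subset P^\pi$ is false in general, so the real content is $P^\pi\subset\hbox{\rm dom}\,\Phi_P$ (together with showing $\Phi_P$ is finite exactly on $P^\pi$, using that on $P^\pi$ we have $\Phi_P\le q<+\infty$, which already gives $P^\pi\subset\hbox{\rm dom}\,\Phi_P$ directly from \eqref{mu}). The subtler half is $\hbox{\rm dom}\,\Phi_P\subset P^\pi$, which must exploit the existence of the special point $b_0$. The idea is: take any $x\in\hbox{\rm dom}\,\Phi_P$ and consider the segment from $b_0$ to $x$. For $t>1$, look at $x_t:=b_0+t(x-b_0)$; a convexity-type estimate on the function $t\mapsto\Phi_P((1-t)b_0+tx)$ together with the strict gap $\Phi_P(b_0)<q(b_0)$ at $t=0$ should force $\Phi_P(x)\le q(x)$, i.e.\ $x\in P^\pi$. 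Concretely, for each $a\in P$ the map $t\mapsto\lfloor (1-t)b_0+tx,a\rfloor-q(a)$ is affine in $t$ while $t\mapsto q((1-t)b_0+tx)$ is quadratic with the same leading coefficient $q(x-b_0)$ as appears when one expands; comparing $\Phi_P$ (a sup of affine functions) against $q$ along this line and using that $\Phi_P-q$ is strictly negative at $b_0$ and $\le 0$ cannot be ruled out — one shows it stays $\le 0$ for all parameter values where $\Phi_P$ is finite. I would carry this out by writing $g(t):=\Phi_P((1-t)b_0+tx)-q((1-t)b_0+tx)$, noting $g$ is convex (sup of affine minus a quadratic with the right sign is still convex here because $-q$ differences cancel the quadratic part in the difference), observing $g(0)<0$ and $g(1)\ge ?$; if $g(1)>0$ then by convexity $g(t)\to+\infty$, but one can then produce a point on the far side of $b_0$ or scale to contradict premaximality — this is the step I expect to require the most care.

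For affineness of $P^\pi$, once we know $P^\pi=\hbox{\rm dom}\,\Phi_P$, recall that $P^\pi$ is $q$-positive (by premaximality, via \eqref{201}) and equals $P^{\pi\pi}$, hence is maximally $q$-positive. By Theorem~\ref{15}, a maximally $q$-positive \emph{convex} set is affine, so it suffices to show $P^\pi$ is convex. But $P^\pi=\hbox{\rm dom}\,\Phi_P$ and the domain of the convex function $\Phi_P$ is automatically convex. Thus affineness follows immediately from Theorem~\ref{15} once the domain identity is in hand. So the whole proof reduces to the dichotomy in the first paragraph plus the domain computation in the second.

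The main obstacle, as indicated, is proving $\hbox{\rm dom}\,\Phi_P\subset P^\pi$ under the assumption that $\Phi_P(b_0)<q(b_0)$ for some $b_0$. The key mechanism is that the existence of such a $b_0$ — a point where $\Phi_P$ dips strictly below $q$ — is very restrictive: combined with premaximality it should propagate to force $\Phi_P\le q$ on its entire domain. I would look for the cleanest formulation via the line segments and convexity of $t\mapsto\Phi_P(b_0+t(x-b_0))-q(b_0+t(x-b_0))$, using that in the difference the quadratic terms in $t$ coincide so this function is genuinely convex (indeed a supremum of affine functions of $t$), is negative at $t=0$, and hence — if it is finite at some $t>0$ where it is positive — would be forced to be positive for all large $t$, contradicting that points far along such a ray, when combined with the maximally $q$-positive set $P^\pi$, would have to land outside it while \eqref{mu} and the structure of $P^{\pi\pi}$ say otherwise. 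Pinning down exactly this contradiction is the crux; everything else is bookkeeping with \eqref{mu}, \eqref{equation2}, Lemma~\ref{102}, and Theorem~\ref{15}.
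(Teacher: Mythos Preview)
Your high-level plan is right and matches the paper: when \eqref{202} fails, fix $b_0$ with $\Phi_P(b_0)<q(b_0)$, prove $P^\pi=\hbox{\rm dom}\,\Phi_P$, and then invoke Theorem~\ref{15} (using that $\hbox{\rm dom}\,\Phi_P$ is convex). The easy inclusion $P^\pi\subset\hbox{\rm dom}\,\Phi_P$ is immediate from \eqref{mu}. The gap is in your argument for $\hbox{\rm dom}\,\Phi_P\subset P^\pi$.

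Your key claim --- that $g(t):=\Phi_P\big((1-t)b_0+tx\big)-q\big((1-t)b_0+tx\big)$ is a supremum of \emph{affine} functions of $t$ --- is false. Writing $\Phi_P-q=-\inf_{a\in P}q(\cdot-a)$, along the line one gets $g(t)=\sup_{a\in P}\big[-q((x-b_0)t+(b_0-a))\big]$; each term is a quadratic in $t$ with the \emph{same} leading coefficient $-q(x-b_0)$, so $g(t)=-q(x-b_0)\,t^2+\big(\text{sup of affine functions}\big)$. The quadratic parts coincide but do not cancel; hence $g$ is convex only when $q(x-b_0)\le 0$. Even granting convexity, $g(0)<0$ together with a hypothetical $g(1)>0$ does not produce the contradiction you allude to: large $t$ simply gives points with $\Phi_P>q$, i.e.\ points \emph{outside} $P^\pi$, which is no contradiction at all. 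So the ``crux'' you flag is indeed missing, and the approach as written does not close.

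The paper avoids trying to bound $\Phi_P(x)$ directly. Instead it proves that $\hbox{\rm dom}\,\Phi_P$ is itself $q$-positive, which (since $P\subset\hbox{\rm dom}\,\Phi_P$) forces $\hbox{\rm dom}\,\Phi_P\subset P^\pi$. The mechanism: for any $b_1,b_2\in\hbox{\rm dom}\,\Phi_P$, convexity of $\Phi_P$ and continuity of $q$ on the finite-dimensional segment give, for all sufficiently small $\lambda>0$, both $(1-\lambda)b_0+\lambda b_1\in P^\pi$ and $(1-\lambda)b_0+\lambda b_2\in P^\pi$; since $P^\pi$ is $q$-positive (premaximality via \eqref{201}), the difference satisfies $0\le q\big(\lambda(b_1-b_2)\big)=\lambda^2 q(b_1-b_2)$, hence $q(b_1-b_2)\ge 0$. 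This two-point perturbation trick is the missing idea.
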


\begin{proof}
Suppose that (\ref{202}) is not true. We first show that 
\begin{equation}
\hbox{\rm dom}\,\Phi _{P}\ \hbox{is}\ q\hbox{--positive}.  \label{203}
\end{equation}%
Since (\ref{202}) fails, we can first fix $b_{0}\in B$ such that $(\Phi
_{P}-q))(b_{0})<0$. Now let $b_{1},b_{2}\in \hbox{\rm dom}\,\Phi _{P}$. Let $%
\lambda \in \,]0,1[\,$. Then 
\begin{equation}
(\Phi _{P}-q)\left( (1-\lambda )b_{0}+\lambda b_{1}\right) \leq (1-\lambda
)\Phi _{P}(b_{0})+\lambda \Phi _{P}(b_{1})-q\left( (1-\lambda )b_{0}+\lambda
b_{1}\right) .  \label{204}
\end{equation}%
Since $\Phi _{P}(b_{1})\in \hbox{\tenmsb R}$ and quadratic forms on
finite--dimensional spaces are continuous, the right--hand expression in (%
\ref{204}) converges to $\Phi _{P}(b_{0})-q(b_{0})$ as $\lambda \rightarrow
0+$. Now $\Phi _{P}(b_{0})-q(b_{0})<0$ and so, for all sufficiently small $%
\lambda \in \,]0,1[\,$, $(\Phi _{P}-q)\left( (1-\lambda )b_{0}+\lambda
b_{1}\right) <0$, from which $(1-\lambda )b_{0}+\lambda b_{1}\in P^{\pi }$.
Similarly, for all sufficiently small $\lambda \in \,]0,1[\,$, $(1-\lambda
)b_{0}+\lambda b_{2}\in P^{\pi }$. Thus we can choose $\lambda _{0}\in
\,]0,1[\,$ such that both $(1-\lambda _{0})b_{0}+\lambda _{0}b_{1}\in P^{\pi
}$ and $(1-\lambda _{0})b_{0}+\lambda b_{2}\in P^{\pi }$. Since $P^{\pi }$
is $q$--positive, 
\begin{equation*}
0\leq q\left( [(1-\lambda _{0})b_{0}+\lambda _{0}b_{1}]-[(1-\lambda
_{0})b_{0}+\lambda b_{2}]\right) =\lambda _{0}^{2}q(b_{1}-b_{2}).
\end{equation*}%
So we have proved that, for all $b_{1},b_{2}\in \hbox{\rm dom}\,\Phi _{P}$, $%
q(b_{1}-b_{2})\geq 0$. This establishes (\ref{203}). Therefore, since $%
\hbox{\rm dom}\,\Phi _{P}\supset P$, we have $\hbox{\rm dom}\,\Phi
_{P}\subset P^{\pi }$. On the other hand, if $b\in P^{\pi }$, then $\Phi
_{P}(b)\leq q(p)$, and so $b\in \hbox{\rm dom}\,\Phi _{P}$. This completes
the proof that $P^{\pi }=\hbox{\rm dom}\,\Phi _{P}$. Finally, since $P^{\pi
}(=\hbox{\rm dom}\,\Phi _{P})$ is convex, Theorem \ref{15} implies that $%
P^{\pi }$ is an affine subset of $B$.
\end{proof}

\bigskip

Our next result is a new characterization of premaximally $q$--positive sets.

\begin{theorem}
\label{104}Let $P$ be a $q$--positive subset of $B$. Then $P$ is
premaximally $q$--positive if, and only if, either (\ref{202}) is true or $%
P^{\pi }$ is an affine subset of $B$.
\end{theorem}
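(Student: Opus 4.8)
The plan is to prove the two directions separately, using Lemmas \ref{102} and \ref{103} as the backbone. The ``if'' direction is the easy half. Suppose first that (\ref{202}) holds, i.e.\ $\Phi_P\geq q$ on $B$; then Lemma \ref{102} immediately gives that $P$ is premaximally $q$--positive. Suppose instead that $P^\pi$ is an affine subset of $B$. Recall from (\ref{201}) in Definition \ref{101} that $P$ is premaximally $q$--positive if and only if $P^\pi$ is $q$--positive, so it suffices to show that an affine $P^\pi$ must be $q$--positive. Here I would use Proposition \ref{starshaped}: if $P^\pi$ were not $q$--positive there would be $x_1,x_2\in P^\pi$ with $q(x_1-x_2)<0$, and since $P^\pi=P^{\pi\pi\pi}$ (Lemma \ref{third polar}) applied with $A=\{x_1,x_2\}\subset P^\pi$ — wait, one must be a little careful: the cleaner route is to observe that since $P^\pi$ is affine and contains $x_1,x_2$, it contains the whole line through them; but for points $x_\lambda=\lambda x_1+(1-\lambda)x_2$ on that line, $q(x_{\lambda_1}-x_{\lambda_2})=(\lambda_1-\lambda_2)^2 q(x_1-x_2)<0$ whenever $\lambda_1\neq\lambda_2$, contradicting nothing yet — so instead I want to contradict uniqueness/maximality. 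Actually the slick argument: by the computation in the proof of the previous Proposition, $\{x_\lambda:\lambda\in\mathbb R\}\subset\{x_1,x_2\}^{\pi\pi}\subset P^{\pi\pi}$, and if $P^\pi$ is not $q$--positive then $P^{\pi\pi}\subsetneq P^\pi$ forces $P$ to have at least two maximal extensions (as in the continuum proposition), contradicting premaximality. So the cleanest statement is: affine $\Rightarrow$ $P^\pi$ is $q$--positive, because otherwise two points of $P^\pi$ at negative $q$--distance generate, via the affine hull, distinct $q$--positive — hence (Zorn) distinct maximal — extensions of $P$.

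For the ``only if'' direction, assume $P$ is premaximally $q$--positive. This is exactly the hypothesis of Lemma \ref{103}, whose conclusion is a dichotomy: either (\ref{202}) holds, or $P^\pi=\operatorname{dom}\Phi_P$ \emph{and} $P^\pi$ is affine. In either branch the required disjunction ``(\ref{202}) or $P^\pi$ affine'' holds, so this direction is essentially a direct citation of Lemma \ref{103}, with the extra information $P^\pi=\operatorname{dom}\Phi_P$ simply discarded.

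I expect the main obstacle — and the only point needing genuine care — to be the step in the ``if'' direction showing that an affine $P^\pi$ is $q$--positive. The subtlety is that ``affine'' and ``premaximal'' are both defined through $P^\pi$, so I must make sure I am not arguing in a circle: I should work directly from the definition of $q$--positivity for $P^\pi$, produce (by contradiction) two points $x_1,x_2\in P^\pi$ with $q(x_1-x_2)<0$, note that the affine hull of $\{x_1,x_2\}$ lies in $P^\pi$, and then invoke Proposition \ref{starshaped} together with Lemma \ref{third polar} to see that distinct points $x_{\lambda_1},x_{\lambda_2}$ on the segment both lie in $P^\pi$ with $q(x_{\lambda_1}-x_{\lambda_2})<0$; extending each of $P\cup\{x_{\lambda_i}\}$ to a maximal $q$--positive set by Zorn's Lemma yields two distinct maximal $q$--positive supersets of $P$, contradicting premaximality. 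Everything else is bookkeeping with results already in hand.
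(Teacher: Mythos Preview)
Your ``only if'' direction and the (\ref{202}) branch of the ``if'' direction are fine and match the paper. The gap is in the affine branch of the ``if'' direction, and it is a genuine circularity. You want to prove: \emph{if $P^\pi$ is affine, then $P$ is premaximally $q$--positive}. Your plan is to show $P^\pi$ is $q$--positive by contradiction; assuming it is not, you produce $x_1,x_2\in P^\pi$ with $q(x_1-x_2)<0$, and then conclude that $P$ has two distinct maximal $q$--positive supersets, ``contradicting premaximality.'' But premaximality is precisely the conclusion you are trying to establish in this direction --- it is not among your hypotheses. All you have derived is that $P$ is not premaximally $q$--positive, which by (\ref{201}) is just a restatement of the assumption that $P^\pi$ is not $q$--positive. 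The hypothesis that $P^\pi$ is \emph{affine} plays no essential role in your argument (you only use it to say the segment lies in $P^\pi$, but Proposition~\ref{starshaped} already gives you points in $P^{\pi}$ without affinity), which is a red flag that the argument cannot be right.

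The paper's argument for this branch is short, direct, and actually uses affinity in an essential way: given $b_1,b_2\in P^\pi$, pick any $p\in P$. Since $P$ is $q$--positive, $p\in P^\pi$; since $P^\pi$ is affine, the translate $p+b_1-b_2\in P^\pi$. Now apply the \emph{definition} of $P^\pi$ to the pair $(p+b_1-b_2,\,p)$ with $p\in P$:
\[
q(b_1-b_2)=q\bigl((p+b_1-b_2)-p\bigr)\geq 0.
\]
So $P^\pi$ is $q$--positive, and (\ref{201}) finishes. The key idea you are missing is this translation trick, which converts the question ``is $q(b_1-b_2)\geq 0$?'' into one where one of the two points lies in $P$ itself, where the defining inequality of $P^\pi$ applies.
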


\begin{proof}
\textquotedblleft Only if\textquotedblright\ is clear from Lemma \ref{103}.
If, on the other hand, (\ref{202}) is true then Lemma \ref{102}\ implies
that $P$ is premaximally $q$--positive. It remains to prove that if $P^{\pi
} $ is an affine subset of $B$ then $P$ is premaximally $q$--positive. So
let $P^{\pi }$ be an affine subset of $B$. Suppose that $b_{1},b_{2}\in
P^{\pi }$, and let $p\in P$. Since $P$ is $q$--positive, $p\in P^{\pi }$,
and since $P^{\pi }$ is affine, $p+b_{1}-b_{2}\in P^{\pi }$, from which $%
q(b_{1}-b_{2})=q([p+b_{1}-b_{2}]-p)\geq 0$. Thus we have proved that $P^{\pi
}$ is $q$--positive. It now follows from (\ref{201}) that $P$ is
premaximally $q$--positive.
\end{proof}

\begin{corollary}
Let $P$ be an affine $q$--positive subset of $B$. Then $P$ is premaximally $%
q $--positive if and only if $P^{\pi }$ is an affine subset of $B$.
\end{corollary}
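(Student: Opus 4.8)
The plan is to read the corollary off Theorem~\ref{104} together with Theorem~\ref{15}. The implication ``$P^{\pi}$ affine $\Rightarrow$ $P$ premaximally $q$--positive'' is immediate from Theorem~\ref{104} and does not even use that $P$ is affine. For the converse, suppose $P$ is premaximally $q$--positive. Theorem~\ref{104} gives that either (\ref{202}) holds or $P^{\pi}$ is affine; in the second case there is nothing to do, so the whole content is to show that, when $P$ is affine, $P^{\pi}$ is affine regardless of which alternative occurs. By Definition~\ref{101}, $P^{\pi}$ is maximally $q$--positive (in particular $q$--positive), so by Theorem~\ref{15} it suffices to prove that $P^{\pi}$ is \emph{convex}.

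To prove convexity of $M:=P^{\pi}$ I would first translate so that $0\in P$; then $P$ is a linear subspace and $q(p)=q(p-0)\geq 0$ for every $p\in P$. The next step is to extract from the affineness of $P$ two closure properties of $M$. First, $M$ is invariant under translation by $P$: for $x\in M$, $p_{0}\in P$ and any $p\in P$, $q\bigl((x+p_{0})-p\bigr)=q\bigl(x-(p-p_{0})\bigr)\geq 0$, since $p-p_{0}$ runs over all of $P$. Second, $M$ is closed under multiplication by every real scalar: if $s\neq 0$ then $q(sx-p)=s^{2}q(x-s^{-1}p)\geq 0$ because $s^{-1}p\in P$, while $q(0\cdot x-p)=q(-p)=q(p)\geq 0$. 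Combining this scalar homogeneity with the $q$--positivity of $M$, the inequality $q(sa-tb)\geq 0$ for all $s,t\in\mathbb{R}$ and all $a,b\in M$ expresses that the binary quadratic form $s^{2}q(a)-st\lfloor a,b\rfloor+t^{2}q(b)$ is positive semidefinite, which forces $q(a),q(b)\geq 0$ and the Cauchy--Schwarz--type bound $\lfloor a,b\rfloor^{2}\leq 4\,q(a)\,q(b)$, i.e.\ $\lfloor a,b\rfloor\geq -2\sqrt{q(a)q(b)}$.

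With these two facts the convexity of $M$ is routine: for $u,v\in M$, $\lambda\in[0,1]$ and $p\in P$, translation--invariance gives $u-p,\,v-p\in M$, and since $\lambda u+(1-\lambda)v-p=\lambda(u-p)+(1-\lambda)(v-p)$,
\[
q\bigl(\lambda u+(1-\lambda)v-p\bigr)=\lambda^{2}q(u-p)+\lambda(1-\lambda)\lfloor u-p,v-p\rfloor+(1-\lambda)^{2}q(v-p)\ \geq\ \Bigl(\lambda\sqrt{q(u-p)}-(1-\lambda)\sqrt{q(v-p)}\Bigr)^{2}\ \geq\ 0 .
\]
As $p\in P$ is arbitrary, $\lambda u+(1-\lambda)v\in P^{\pi}=M$; hence $M$ is convex, and therefore affine by Theorem~\ref{15}. (Alternatively, a convex, scalar--homogeneous subset containing $0$ is a subspace, which would bypass Theorem~\ref{15}.)

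The main obstacle is exactly this convexity claim. Theorem~\ref{104} by itself does not settle the corollary, because it leaves open the possibility that (\ref{202}) holds while $P^{\pi}$ is not affine, and this must be excluded for affine $P$. The decisive observation is that affineness of $P$ equips $P^{\pi}$ with \emph{both} translation--invariance by $P$ \emph{and} full scalar homogeneity, and that, in the presence of $q$--positivity, these upgrade the bare relation $q(a-b)\geq 0$ on $M$ to a genuine Cauchy--Schwarz inequality; the cheap estimate $\lfloor a,b\rfloor\geq -q(a)-q(b)$ that one gets for free is not strong enough to make the convex--combination computation close. Everything else is bookkeeping.
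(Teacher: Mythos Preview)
Your argument is correct, and it differs from the paper's proof in an interesting way. The paper's proof never invokes Theorem~\ref{15}; instead it applies Zorn's Lemma to obtain a maximal affine set $M$ with $P\subset M\subset P^{\pi}$ and then, for an arbitrary $b\in P^{\pi}$, verifies by a direct computation with affine combinations (using only that $P^{\pi}$ is $q$--positive) that the affine hull of $M\cup\{b\}$ still lies in $P^{\pi}$, forcing $b\in M$ and hence $P^{\pi}=M$. Your route instead reduces via Theorem~\ref{15} to proving convexity of $P^{\pi}$, and the key novelty is the observation that translating so that $P$ is a linear subspace makes $P^{\pi}$ both $P$--translation--invariant and scalar--homogeneous; together with the $q$--positivity of $P^{\pi}$ this upgrades the trivial bound to the Cauchy--Schwarz inequality $\lfloor a,b\rfloor^{2}\leq 4\,q(a)\,q(b)$ on $P^{\pi}$, from which convexity follows by a clean square. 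Your approach avoids Zorn's Lemma and yields the pleasant byproduct that $P^{\pi}$ (after translation) is in fact a linear subspace on which $q\geq 0$ satisfies a Cauchy--Schwarz inequality; the paper's approach is more self--contained in that it does not lean on Theorem~\ref{15}.
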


\begin{proof}
In view of Theorem \ref{104}, we only need to prove the "only if" statement.
Assume that $P$ is premaximally $q$--positive. Since the family of affine
sets $A$ such that $P\subset A\subset P^{\pi }$ is inductive, by Zorn's
Lemma it has a maximal element $M.$ Let $b\in P^{\pi },$ $m_{1},m_{2}\in M,$ 
$p\in P$ and $\lambda ,\mu ,\nu \in \mathbb{R}$ be such that $\lambda +\mu
+\nu =1.$ If $\lambda \neq 0$ then $q\left( \lambda b+\mu m_{1}+\nu
m_{2}-p\right) =\lambda ^{2}q\left( b-\frac{1}{\lambda }\left( p-\mu
m_{1}-\nu m_{2}\right) \right) \geq 0,$ since $\frac{1}{\lambda }\left(
p-\mu m_{1}-\nu m_{2}\right) \in M\subset P^{\pi }$ and $P^{\pi }$ is $q$%
--positive (by \cite[Lemma 5.4]{Si07}). If, on the contrary, $\lambda =0$
then $q\left( \lambda b+\mu m_{1}+\nu m_{2}-p\right) =q\left( \mu m_{1}+\nu
m_{2}-p\right) \geq 0,$ because in this case $\mu m_{1}+\nu m_{2}\in
M\subset P^{\pi }.$ Therefore $\lambda b+\mu m_{1}+\nu m_{2}\in P^{\pi }.$
We have thus proved that the affine set generated by $M\cup \left\{
b\right\} $ is contained in $P^{\pi }.$ Hence, by the maximality of $M,$ we
have $b\in M,$ and we conclude that $P^{\pi }=M.$
\end{proof}

\begin{definition}
\label{105}Let $E$ be a nonzero Banach space and $A$ be a nonempty monotone
subset of $E\times E^{\ast }$. We say that $A$ is \textsl{of type (NI)} if, 
\begin{equation*}
\hbox{for all}\ (y^{\ast },y^{\ast \ast })\in E^{\ast }\times E^{\ast \ast
},\quad \inf\nolimits_{(a,a^{\ast })\in A}\langle a^{\ast }-y^{\ast },%
\widehat{a}-y^{\ast \ast }\rangle \leq 0.
\end{equation*}%
We define $\iota \colon \ E\times E^{\ast }\rightarrow E^{\ast }\times
E^{\ast \ast }$ by $\iota (x,x^{\ast })=(x^{\ast },\widehat{x})$, where $%
\widehat{x}$ is the canonical image of $x$ in $E^{\ast \ast }$. We say that $%
A$ is \textsl{unique} if there exists a unique maximally monotone subset $M$
of $E^{\ast }\times E^{\ast \ast }$ such that $M\supset \iota (A)$. We now
write $B:=E^{\ast }\times E^{\ast \ast }$ and define $\lfloor \cdot ,\cdot
\rfloor \colon \ B\times B\rightarrow \hbox{\tenmsb R}$ by $\left\lfloor
(x^{\ast },x^{\ast \ast }),(y^{\ast },y^{\ast \ast })\right\rfloor :=\langle
y^{\ast },x^{\ast \ast }\rangle +\langle x^{\ast },y^{\ast \ast }\rangle $. $%
(B,\lfloor \cdot ,\cdot \rfloor )$ is an SSD space. Clearly, for all $%
(y^{\ast },y^{\ast \ast })\in E^{\ast }\times E^{\ast \ast }$, $q(y^{\ast
},y^{\ast \ast })=\langle y^{\ast },y^{\ast \ast }\rangle $. Now $\iota (A)$
is $q$--positive, $A$ is of type (NI) exactly when $\Phi _{\iota (A)}\geq q$
on $B$, and $A$ is unique exactly when $\iota (A)$ is premaximally $q$%
--positive. In this case, we write $\iota (A)^{\pi }$ for the unique
maximally monotone subset of $E^{\ast }\times E^{\ast \ast }$ that contains $%
\iota (A)$.
\end{definition}

Corollary \ref{106}(a) appears in \cite{301}, and Corollary \ref{106}(c)
appears in \cite[Theorem 1.6]{MAS09}.

\begin{corollary}
\label{106}Let $E$ be a nonzero Banach space and $A$ be a nonempty monotone
subset of $E\times E^{\ast }$.

\noindent \textrm{(a) }If $A$ is of type (NI) then $A$ is unique and $\iota
(A)^{\pi }=\mathcal{P}_{q}\left( \Phi _{\iota (A)}\right) $.

\noindent \textrm{(b) }If $\iota (A)^{\pi }$ is an affine subset of $E^{\ast
}\times E^{\ast \ast }$ then $A$ is unique.

\noindent \textrm{(c)} Let $A$ be unique. Then either $A$ is of type (NI),
or 
\begin{equation}
\iota (A)^{\pi }=\{(y^{\ast },y^{\ast \ast })\in E^{\ast }\times E^{\ast
\ast }\colon \ \inf\nolimits_{(a,a^{\ast })\in A}\langle a^{\ast }-y^{\ast },%
\widehat{a}-y^{\ast \ast }\rangle >-\infty \}  \label{205}
\end{equation}%
and $\iota (A)^{\pi }$ is an affine subset of $E^{\ast }\times E^{\ast \ast
} $.

\noindent \textrm{(d)} Let $A$ be maximally monotone and unique. Then either 
$A$ is of type (NI), or $A$ is an affine subset of $E\times E^{\ast }$ and $%
A=\hbox{\rm dom}\,\varphi _{A}$, where $\varphi _{A}$ is the Fitzpatrick
function of $A$ in the usual sense.
\end{corollary}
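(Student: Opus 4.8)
The plan is to reduce everything to Lemma~\ref{103} applied to the $q$--positive set $P:=\iota (A)\subset B=E^{\ast }\times E^{\ast \ast }$. Since $A$ is unique, $\iota (A)$ is premaximally $q$--positive by Definition~\ref{105}, so Lemma~\ref{103} yields a dichotomy: either $\Phi _{\iota (A)}\geq q$ on $B$ --- which, by Definition~\ref{105}, says precisely that $A$ is of type (NI), and we are in the first alternative --- or $\iota (A)^{\pi }=\hbox{\rm dom}\,\Phi _{\iota (A)}$ and $\iota (A)^{\pi }$ is an affine subset of $B$. (This second alternative is of course consistent with the description of $\iota (A)^{\pi }$ in Corollary~\ref{106}(c), since $\hbox{\rm dom}\,\Phi _{\iota (A)}$ is exactly the set in (\ref{205}).) It remains to show that in this second case $A$ is affine in $E\times E^{\ast }$ and $A=\hbox{\rm dom}\,\varphi _{A}$.

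The first ingredient I would record is the pointwise identity $\varphi _{A}(x,x^{\ast })=\Phi _{\iota (A)}\big(\iota (x,x^{\ast })\big)$ for all $(x,x^{\ast })\in E\times E^{\ast }$, which is an immediate computation from the definitions of the usual Fitzpatrick function, of $\Phi $, of $\iota $, and of $\lfloor \cdot ,\cdot \rfloor $, using $\langle x^{\ast },\widehat{a}\rangle =\langle a,x^{\ast }\rangle $. Since $\iota $ is injective, this gives $\iota (\hbox{\rm dom}\,\varphi _{A})=\hbox{\rm dom}\,\Phi _{\iota (A)}\cap \iota (E\times E^{\ast })$.

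The key geometric step is the claim $\iota (A)=\iota (A)^{\pi }\cap \iota (E\times E^{\ast })$. The inclusion ``$\subset $'' holds because monotonicity of $A$ makes $\iota (A)$ $q$--positive --- using $q\big((x^{\ast },\widehat{x})-(a^{\ast },\widehat{a})\big)=\langle x-a,x^{\ast }-a^{\ast }\rangle $ --- so every point of $\iota (A)$ is $q$--positively related to $\iota (A)$, i.e.\ $\iota (A)\subset \iota (A)^{\pi }$. For ``$\supset $'', if $(x^{\ast },\widehat{x})\in \iota (A)^{\pi }$ then, by the same computation, $\langle x-a,x^{\ast }-a^{\ast }\rangle \geq 0$ for all $(a,a^{\ast })\in A$, so $(x,x^{\ast })$ is monotonically related to $A$; maximal monotonicity of $A$ forces $(x,x^{\ast })\in A$, hence $(x^{\ast },\widehat{x})\in \iota (A)$. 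This is the only place where maximal monotonicity (as opposed to mere monotonicity) of $A$ is used, and it is the crux of the argument; everything else is bookkeeping.

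Finally I would assemble the pieces. The map $\iota $ is a linear injection of $E\times E^{\ast }$ onto the linear subspace $\iota (E\times E^{\ast })=E^{\ast }\times \widehat{E}$ of $B$; thus $\iota (A)=\iota (A)^{\pi }\cap \iota (E\times E^{\ast })$, being the (nonempty) intersection of the affine set $\iota (A)^{\pi }$ with a linear subspace, is affine, and pulling back by $\iota $ shows $A$ is an affine subset of $E\times E^{\ast }$. For the domain identity, in the second alternative $\hbox{\rm dom}\,\Phi _{\iota (A)}=\iota (A)^{\pi }$, so combining the two displayed identities gives $\iota (\hbox{\rm dom}\,\varphi _{A})=\hbox{\rm dom}\,\Phi _{\iota (A)}\cap \iota (E\times E^{\ast })=\iota (A)^{\pi }\cap \iota (E\times E^{\ast })=\iota (A)$, whence $\hbox{\rm dom}\,\varphi _{A}=A$ by injectivity of $\iota $. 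No obstacle remains beyond the maximal--monotonicity step flagged above.
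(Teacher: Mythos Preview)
Your argument for parts (c) and (d) is correct and follows the same route as the paper. For (c), Lemma~\ref{103} gives exactly the stated dichotomy once one observes, as you do, that $\hbox{\rm dom}\,\Phi _{\iota (A)}$ equals the set in~(\ref{205}). For (d), the paper shows that $\iota ^{-1}(\iota (A)^{\pi })$ is a monotone superset of $A$, so maximal monotonicity forces $A=\iota ^{-1}(\iota (A)^{\pi })$; this is precisely your identity $\iota (A)=\iota (A)^{\pi }\cap \iota (E\times E^{\ast })$, and your proof of it is the same as the paper's (only spelled out in more detail). Your derivation of $A=\hbox{\rm dom}\,\varphi _{A}$ via the pointwise identity $\varphi _{A}=\Phi _{\iota (A)}\circ \iota $ is slightly more explicit than the paper's appeal to~(\ref{205}), but it is the same content.

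One omission: your opening claim that ``everything reduces to Lemma~\ref{103}'' does not cover parts (a) and (b). Part (a) requires Lemma~\ref{102} (the forward direction: if $\Phi _{P}\geq q$ then $P$ is premaximally $q$--positive and $P^{\pi }=\mathcal{P}_{q}(\Phi _{P})$), and part (b) requires the ``if'' direction of Theorem~\ref{104} (if $P^{\pi }$ is affine then $P$ is premaximally $q$--positive). Lemma~\ref{103} is a partial converse to Lemma~\ref{102} and by itself yields neither (a) nor (b); you should cite those results for those parts.
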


\begin{proof}
(a), (b) and (c) are immediate from Lemmas \ref{102}\ and \ref{103} and
Theorem \ref{104}, and the terminology introduced in Definition \ref{105}.

(d). From (c) and the linearity of $\iota $, $\iota ^{-1}\left( \iota
(A)^{\pi }\right) $ is an affine subset of $E\times E^{\ast }$. Furthermore,
it is also easy to see that $\iota ^{-1}\left( \iota (A)^{\pi }\right) $ is
a monotone subset of $E\times E^{\ast }$. Since $A\subset \iota ^{-1}\left(
\iota (A)^{\pi }\right) $, the maximality of $A$ implies that $A=\iota
^{-1}\left( \iota (A)^{\pi }\right) $. Finally, it follows from (\ref{205})
that $\iota ^{-1}\left( i(A)^{\pi }\right) =\hbox{\rm dom}\,\varphi _{A}$.
\end{proof}

\subsection{Minimal convex functions bounded below by $q$}

This section extends some results of \cite{MS08}.

\begin{lemma}
\label{fund ineq}Let $B$ be an SSD space and $f:B\rightarrow \mathbb{R}\cup
\{+\infty \}$ be a proper convex function$.$ Then, for every $x,y\in B$ and
every $\alpha ,\beta \geq 0$ with $\alpha +\beta =1,$ one has%
\begin{equation*}
\alpha \max \left\{ f\left( x\right) ,q\left( x\right) \right\} +\beta \max
\left\{ f^{@}\left( y\right) ,q\left( y\right) \right\} \geq q\left( \alpha
x+\beta y\right) .
\end{equation*}
\end{lemma}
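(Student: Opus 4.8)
The plan is to reduce the claimed inequality to the ordinary (generalized) Fenchel--Young inequality from \eqref{FY} together with convexity of $q$ along the segment $[x,y]$. First I would observe that the quadratic form $q$ satisfies the identity
\begin{equation*}
q(\alpha x+\beta y)=\alpha^{2}q(x)+\alpha\beta\lfloor x,y\rfloor+\beta^{2}q(y),
\end{equation*}
which, using $\alpha+\beta=1$, rearranges to
\begin{equation*}
q(\alpha x+\beta y)=\alpha q(x)+\beta q(y)-\alpha\beta\bigl(q(x)-\lfloor x,y\rfloor+q(y)\bigr)=\alpha q(x)+\beta q(y)-\alpha\beta\,q(x-y).
\end{equation*}
So the target inequality is equivalent to
\begin{equation*}
\alpha\max\{f(x),q(x)\}+\beta\max\{f^{@}(y),q(y)\}\geq\alpha q(x)+\beta q(y)-\alpha\beta\,q(x-y),
\end{equation*}
i.e.\ to
\begin{equation*}
\alpha\bigl(\max\{f(x),q(x)\}-q(x)\bigr)+\beta\bigl(\max\{f^{@}(y),q(y)\}-q(y)\bigr)\geq-\alpha\beta\,q(x-y).
\end{equation*}

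Next I would split into cases according to the signs of $f(x)-q(x)$ and $f^{@}(y)-q(y)$. If either of these is nonpositive, say $f(x)\le q(x)$, then the corresponding $\max$ term is $q(x)$, the left side is $\ge\beta\bigl(\max\{f^{@}(y),q(y)\}-q(y)\bigr)\ge 0$, and we would still need to handle the $-\alpha\beta q(x-y)$ on the right — so this easy-looking case actually requires $q(x-y)\ge 0$, which need not hold. This tells me the cleaner route is not to case-split but to use Fenchel--Young directly. Write $M_1:=\max\{f(x),q(x)\}$ and $M_2:=\max\{f^{@}(y),q(y)\}$. Since $M_1\ge f(x)$ and $M_2\ge f^{@}(y)$, \eqref{FY} gives $M_1+M_2\ge f(x)+f^{@}(y)\ge\lfloor x,y\rfloor=2q\bigl(\tfrac{x+y}{2}\bigr)-\tfrac12\lfloor x,x\rfloor-\tfrac12\lfloor y,y\rfloor$; more usefully, $M_1\ge q(x)$ and $M_2\ge q(y)$ directly, and also $M_1+M_2\ge f(x)+f^{@}(y)\ge\lfloor x,y\rfloor$.

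Then the key computation: since $\alpha M_1\ge\alpha^{2}M_1+\alpha\beta M_1$ when... no — instead I would use that for $\alpha,\beta\ge0$, $\alpha+\beta=1$, one has $\alpha M_1=\alpha^2 M_1+\alpha\beta M_1$ and $\beta M_2=\beta^2 M_2+\alpha\beta M_2$, hence
\begin{equation*}
\alpha M_1+\beta M_2=\alpha^2 M_1+\beta^2 M_2+\alpha\beta(M_1+M_2)\ge\alpha^2 q(x)+\beta^2 q(y)+\alpha\beta\lfloor x,y\rfloor=q(\alpha x+\beta y),
\end{equation*}
using $M_1\ge q(x)$, $M_2\ge q(y)$ on the first two terms and $M_1+M_2\ge\lfloor x,y\rfloor$ (from \eqref{FY}) on the cross term, together with the expansion of $q(\alpha x+\beta y)$ recorded above. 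This is exactly the claim. The main obstacle is simply spotting the decomposition $\alpha M_1+\beta M_2=\alpha^2M_1+\beta^2M_2+\alpha\beta(M_1+M_2)$ that lets the three estimates ($M_1\ge q(x)$, $M_2\ge q(y)$, and Fenchel--Young for the mixed term) combine cleanly; once that is in hand the proof is a two-line computation with no need for case analysis or lower semicontinuity. I would also remark that the finite real value of $f$ at its domain points guarantees $M_1,M_2$ are well-defined (possibly $+\infty$, in which case the inequality is trivial), so we may assume $x\in\dom f$ and $y\in\dom f^{@}$.
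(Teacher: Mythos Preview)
Your proof is correct and is essentially the same as the paper's: both expand $q(\alpha x+\beta y)=\alpha^{2}q(x)+\alpha\beta\lfloor x,y\rfloor+\beta^{2}q(y)$, apply the Fenchel--Young inequality \eqref{FY} to bound the cross term $\lfloor x,y\rfloor\le f(x)+f^{@}(y)$, and use the trivial bounds $\max\{f(x),q(x)\}\ge q(x)$, $\max\{f^{@}(y),q(y)\}\ge q(y)$ on the square terms via the identity $\alpha=\alpha^{2}+\alpha\beta$. The paper simply runs the chain from $q(\alpha x+\beta y)$ upward rather than from $\alpha M_{1}+\beta M_{2}$ downward, regrouping as $\alpha(\alpha q(x)+\beta f(x))+\beta(\alpha f^{@}(y)+\beta q(y))$ before invoking the $\max$; your exploratory detour through case-splitting is unnecessary, as you yourself recognized.
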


\begin{proof}
Using (\ref{FY}) one gets%
\begin{eqnarray*}
q\left( \alpha x+\beta y\right) &=&\alpha ^{2}q\left( x\right) +\alpha \beta
\lfloor x,y\rfloor +\beta ^{2}q\left( y\right) \\
&\leq &\alpha ^{2}q\left( x\right) +\alpha \beta \left( f(x)+f^{@}(y)\right)
+\beta ^{2}q\left( y\right) \\
&=&\alpha \left( \alpha q\left( x\right) +\beta f(x)\right) +\beta \left(
\alpha f^{@}(y)+\beta q\left( y\right) \right) \\
&\leq &\alpha \max \left\{ f\left( x\right) ,q\left( x\right) \right\}
+\beta \max \left\{ f^{@}\left( y\right) ,q\left( y\right) \right\} .
\end{eqnarray*}
\end{proof}

\begin{corollary}
\label{basic cor}Let $B$ be an SSD space, $f:B\rightarrow \mathbb{R\cup }%
\left\{ +\infty \right\} $ be a proper convex function such that $f\geq q$
and $x\in B.$ Then there exists a convex function $h:B\rightarrow \mathbb{%
R\cup }\left\{ +\infty \right\} $ such that%
\begin{equation*}
f\geq h\geq q\text{\qquad and\qquad }\max \left\{ f^{@}\left( x\right)
,q\left( x\right) \right\} \geq h\left( x\right) .
\end{equation*}
\end{corollary}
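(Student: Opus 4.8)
The plan is to obtain $h$ by ``shaving down'' $f$ at the single point $x$ while keeping the result convex and $\geq q$. First I would dispose of the degenerate case $\max\{f^{@}(x),q(x)\}=+\infty$, where one simply takes $h:=f$ (convex, $f\geq h\geq q$ trivially, and $h(x)=f(x)\leq+\infty$). So assume $c:=\max\{f^{@}(x),q(x)\}\in\mathbb{R}$; note $f^{@}(x)>-\infty$ since $f$ is proper, and $c\geq q(x)$. Introduce $g\colon B\to\mathbb{R}\cup\{+\infty\}$ with $g(x):=c$ and $g(z):=+\infty$ for $z\neq x$, and set $h:=conv\,(\min\{f,g\})$, the convex envelope of $\min\{f,g\}$. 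Then $h$ is convex by construction, $h\leq\min\{f,g\}\leq f$, and $h(x)\leq\min\{f(x),g(x)\}\leq c=\max\{f^{@}(x),q(x)\}$; so the two ``upper'' requirements hold automatically, and everything reduces to proving $h\geq q$.

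For the lower bound I would use the standard description of the convex envelope of a minimum of two convex functions. Since $f$ is convex, Jensen's inequality collapses any cluster of $f$-atoms in a convex combination into a single one, and since $g$ is $+\infty$ off $\{x\}$ there is at most one relevant $g$-atom, at $x$; hence for every $z\in B$,
\[
h(z)=\inf\{\,\alpha f(y)+\beta c\ :\ \alpha,\beta\geq 0,\ \alpha+\beta=1,\ \alpha y+\beta x=z\,\},
\]
where $\alpha=1$ (so $y=z$) is allowed and recovers $h(z)\leq f(z)$. Fix a representation $z=\alpha y+\beta x$ of this form. Because $f\geq q$ we have $f(y)=\max\{f(y),q(y)\}$, and because $c=\max\{f^{@}(x),q(x)\}$, Lemma \ref{fund ineq} applied to the pair $(y,x)$ with weights $(\alpha,\beta)$ gives
\[
\alpha f(y)+\beta c=\alpha\max\{f(y),q(y)\}+\beta\max\{f^{@}(x),q(x)\}\geq q(\alpha y+\beta x)=q(z).
\]
Taking the infimum over all such representations gives $h(z)\geq q(z)$. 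Finally, $h\leq f$ with $f$ proper makes $h$ finite on the nonempty set $\mathrm{dom}\,f$, while $h\geq q>-\infty$ everywhere, so $h$ is a genuine proper convex function into $\mathbb{R}\cup\{+\infty\}$, as required.

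The one step requiring care is the justification of the displayed formula for $h(z)$ — that the convex envelope of $\min\{f,g\}$ is attained by two-point combinations of the shape $\alpha f(y)+\beta g(x)$ — and in particular the handling of the extreme cases $\alpha=0$ and $\beta=0$ (and the convention $0\cdot(+\infty)=0$); this is routine from Jensen's inequality and the shape of $g$. Alternatively one can bypass the identification altogether: \emph{define} $h$ by the right-hand side of the displayed formula, verify that $h$ is convex by a short computation that amalgamates representations of two points using the convexity of $f$, and then read off $h\leq f$, $h(x)\leq c$, and $h\geq q$ exactly as above.
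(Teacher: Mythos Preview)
Your proposal is correct and follows essentially the same route as the paper: the paper also defines $h:=conv\,\min\{f,\delta_{\{x\}}+\max\{f^{@}(x),q(x)\}\}$ (your $g$ is precisely $\delta_{\{x\}}+c$), reduces the convex envelope to two-point combinations $\alpha f(u)+\beta\max\{f^{@}(x),q(x)\}$ with $\alpha u+\beta x=y$, and then invokes Lemma~\ref{fund ineq} exactly as you do to obtain $h\geq q$. Your explicit treatment of the degenerate case $\max\{f^{@}(x),q(x)\}=+\infty$ and of the $0\cdot(+\infty)$ convention is a welcome addition that the paper leaves implicit.
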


\begin{proof}
Let $h:=conv~\min \left\{ f,\delta _{\left\{ x\right\} }+\max \left\{
f^{@}\left( x\right) ,q\left( x\right) \right\} \right\} .$ Clearly, $h$ is
convex, $f\geq h,$ and $\max \left\{ f^{@}\left( x\right) ,q\left( x\right)
\right\} \geq h\left( x\right) ;$ so, we only have to prove that $h\geq q.$
Let $y\in B.$ Since the functions $f$ and $\delta _{\left\{ x\right\} }+\max
\left\{ f^{@}\left( x\right) ,q\left( x\right) \right\} $ are convex, we have%
\begin{eqnarray*}
h\left( y\right) &=&\inf_{\substack{ u,v\in B  \\ \text{ }\alpha ,\beta \geq
0,\text{ }\alpha +\beta =1  \\ \alpha u+\beta v=y}}\left\{ \alpha f\left(
u\right) +\beta \left( \delta _{\left\{ x\right\} }\left( v\right) +\max
\left\{ f^{@}\left( x\right) ,q\left( x\right) \right\} \right) \right\} \\
&=&\inf_{\substack{ u\in B  \\ \alpha ,\beta \geq 0,\text{ }\alpha +\beta =1 
\\ \alpha u+\beta x=y}}\left\{ \alpha f\left( u\right) +\beta \max \left\{
f^{@}\left( x\right) ,q\left( x\right) \right\} \right\} \\
&\geq &\inf_{\substack{ u\in B  \\ \alpha ,\beta \geq 0,\text{ }\alpha
+\beta =1  \\ \alpha u+\beta x=y}}q\left( \alpha u+\beta x\right) =q\left(
y\right) ,
\end{eqnarray*}%
the above inequality being a consequence of the assumption $f\geq q$ and
Lemma \ref{fund ineq}. We thus have $h\geq q.$
\end{proof}

\begin{theorem}
\label{minimal}Let $B$ be an SSD space and $f:B\rightarrow \mathbb{R\cup }%
\left\{ +\infty \right\} $ be a minimal element of the set of convex
functions minorized by $q.$ Then $f^{@}\geq f.$
\end{theorem}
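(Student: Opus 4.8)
The plan is to argue by contradiction: suppose $f$ is a minimal element of $\mathcal{PC}_q(B)$ (the convex functions minorized by $q$) but that $f^{@}\geq f$ fails, so there is some $x\in B$ with $f^{@}(x)<f(x)$. Since $f\geq q$, the generalized Fenchel--Young inequality $f(x)+f^{@}(x)\geq\lfloor x,x\rfloor=2q(x)$ gives $f^{@}(x)\geq 2q(x)-f(x)$, but more usefully, when combined with $f(x)\geq q(x)$ it does not immediately bound $f^{@}(x)$ below by $q(x)$; however the point to extract is simply $\max\{f^{@}(x),q(x)\}<f(x)$, which holds because $f^{@}(x)<f(x)$ and — this is the one sub-point to check — $q(x)<f(x)$ as well. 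The latter inequality needs justification: if $q(x)=f(x)$, then from $f\geq q$ everywhere one has $x\in\mathcal{P}_q(f)$, and one would want to conclude $f^{@}(x)\geq q(x)=f(x)$, contradicting $f^{@}(x)<f(x)$; this step resembles the proof that $\Phi_A^{@}\geq q$ in Proposition \ref{prop0}\textit{(1)}, using $f^{@}(x)\geq\lfloor x,x\rfloor-f(x)=2q(x)-f(x)=q(x)$. So in all cases $\max\{f^{@}(x),q(x)\}<f(x)$.

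Now apply Corollary \ref{basic cor} with this $x$: there is a convex function $h:B\rightarrow\mathbb{R}\cup\{+\infty\}$ with $f\geq h\geq q$ and $\max\{f^{@}(x),q(x)\}\geq h(x)$. Then $h\in\mathcal{PC}_q(B)$, $h\leq f$, and $h(x)\leq\max\{f^{@}(x),q(x)\}<f(x)$, so $h\neq f$ and $h$ lies strictly below $f$ at the point $x$. This contradicts the minimality of $f$ in the set of convex functions minorized by $q$. Hence no such $x$ exists, i.e. $f^{@}\geq f$ on all of $B$.

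The main obstacle is not the contradiction mechanism — Corollary \ref{basic cor} is tailor-made to produce the smaller function — but rather pinning down that $q(x)<f(x)$ strictly whenever $f^{@}(x)<f(x)$, so that the quantity $\max\{f^{@}(x),q(x)\}$ is genuinely strictly less than $f(x)$ and the new function $h$ is genuinely distinct from $f$. This is handled by the short Fenchel--Young argument above: $f^{@}(x)\geq 2q(x)-f(x)$, so if $q(x)\geq f(x)$ then $f^{@}(x)\geq 2q(x)-f(x)\geq q(x)\geq f(x)$, contradicting $f^{@}(x)<f(x)$. One should also remark that "minimal element of the set of convex functions minorized by $q$" is understood with respect to the pointwise order, and that such minimal elements are automatically proper (being minorized by $q$ and, in the relevant setting, not identically $+\infty$), so that Corollary \ref{basic cor} applies; with that in place the argument is complete.
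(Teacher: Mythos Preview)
Your proof is correct and follows essentially the same approach as the paper's: both apply Corollary \ref{basic cor} and use the Fenchel--Young inequality to compare $f(x)$ with $\max\{f^{@}(x),q(x)\}$. The only difference is organizational --- you argue by contradiction (showing $\max\{f^{@}(x),q(x)\}<f(x)$ leads to a strictly smaller $h$), whereas the paper argues directly (minimality forces $h=f$, hence $f(x)\leq\max\{f^{@}(x),q(x)\}\leq\max\{f^{@}(x),\tfrac{1}{2}(f(x)+f^{@}(x))\}$, from which $f(x)\leq f^{@}(x)$).
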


\begin{proof}
It is easy to see that $f$ is proper. Let $x\in B$ and consider the function 
$h$ provided by Corollary \ref{basic cor}. By the minimality of $f,$ we
actually have $h=f;\ $on the other hand, from (\ref{FY}) it follows that $%
\frac{1}{2}\left( f(x)+f^{@}(x)\right) \geq \frac{1}{2}\lfloor x,x\rfloor
=q\left( x\right) .$ Therefore $f\left( x\right) =h\left( x\right) \leq \max
\left\{ f^{@}\left( x\right) ,q\left( x\right) \right\} \leq \max \left\{
f^{@}\left( x\right) ,\frac{1}{2}\left( f(x)+f^{@}(x)\right) \right\} ;$
from these inequalities one easily obtains that $f\left( x\right) \leq
f^{@}\left( x\right) .$
\end{proof}

\begin{proposition}
Let $B$ be an SSD space and $f:B\rightarrow \mathbb{R\cup }\left\{ +\infty
\right\} $ be a convex function such that $f\geq q$ and $f^{@}\geq q.$ Then%
\begin{equation*}
conv~\min \left\{ f,f^{@}\right\} \geq q.
\end{equation*}
\end{proposition}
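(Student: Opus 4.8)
The plan is to reduce the claim to Lemma~\ref{fund ineq}, applied with the roles of $f$ and $f^{@}$ interchanged. Fix $y\in B$; we must show $(conv~\min\{f,f^{@}\})(y)\ge q(y)$. Since $f$ and $f^{@}$ are convex, every point of the convex envelope of their pointwise minimum is, up to taking infima, a convex combination $\alpha u+\beta v=y$ (with $\alpha,\beta\ge 0$, $\alpha+\beta=1$) on which the value is approximated by $\alpha(\min\{f,f^{@}\})(u)+\beta(\min\{f,f^{@}\})(v)$. Because at each of $u$ and $v$ the minimum is either the $f$-value or the $f^{@}$-value, there are essentially three cases for the pair of choices: both $f$, both $f^{@}$, or one of each.

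First I would record the exact formula for the convex envelope of a minimum of two convex functions, namely
\begin{equation*}
(conv~\min\{f,f^{@}\})(y)=\inf_{\substack{u,v\in B\\ \alpha,\beta\ge 0,\ \alpha+\beta=1\\ \alpha u+\beta v=y}}\bigl\{\alpha g_{1}(u)+\beta g_{2}(v)\bigr\},
\end{equation*}
where each of $g_{1},g_{2}$ ranges over $\{f,f^{@}\}$; it suffices to bound below each of the four resulting infima by $q(y)$. When $g_{1}=g_{2}=f$, convexity of $f$ together with $f\ge q$ gives $\alpha f(u)+\beta f(v)\ge f(\alpha u+\beta v)\ge q(y)$; the case $g_{1}=g_{2}=f^{@}$ is identical using $f^{@}\ge q$. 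The remaining two mixed cases are $\alpha f(u)+\beta f^{@}(v)$ and $\alpha f^{@}(u)+\beta f(v)$ with $\alpha u+\beta v=y$; here I would invoke Lemma~\ref{fund ineq}. Indeed, since $f\ge q$ we have $\max\{f(u),q(u)\}=f(u)$, and likewise $\max\{f^{@}(v),q(v)\}=f^{@}(v)$, so Lemma~\ref{fund ineq} yields directly $\alpha f(u)+\beta f^{@}(v)\ge q(\alpha u+\beta v)=q(y)$; the symmetric mixed case follows by swapping $x$ and $y$ in Lemma~\ref{fund ineq}. Taking the infimum over all admissible decompositions in each case gives $(conv~\min\{f,f^{@}\})(y)\ge q(y)$.

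I do not expect a serious obstacle here; the one point that needs a little care is making sure that the infimum defining $conv~\min\{f,f^{@}\}$ can be taken over two-term convex combinations rather than arbitrarily long ones — this is the standard fact that the convex envelope of a minimum of two convex functions is attained (in the infimum sense) by two-point combinations, and is exactly the computation already used in the proof of Corollary~\ref{basic cor}. Once that reduction is in place, the three cases above are immediate, with Lemma~\ref{fund ineq} doing all the real work in the mixed case. It is worth noting that the hypothesis $f^{@}\ge q$ is used only in the ``both $f^{@}$'' case; the mixed cases and the ``both $f$'' case need only $f\ge q$ and the Fenchel--Young inequality built into Lemma~\ref{fund ineq}.
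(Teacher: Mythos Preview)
Your proof is correct and follows the same approach as the paper: reduce to the two-point formula for the convex envelope of a minimum of two convex functions, then apply Lemma~\ref{fund ineq}. The paper's version is more economical, since the formula $conv~\min\{f,f^{@}\}(y)=\inf\{\alpha f(u)+\beta f^{@}(v):\alpha u+\beta v=y\}$ already covers everything (your ``both $f$'' and ``both $f^{@}$'' cases are subsumed by taking $\beta=0$ or $\alpha=0$), so only the single mixed case needs to be treated.

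One correction to your closing remark: the hypothesis $f^{@}\ge q$ \emph{is} used in the mixed case. Lemma~\ref{fund ineq} gives $\alpha\max\{f(u),q(u)\}+\beta\max\{f^{@}(v),q(v)\}\ge q(y)$, and to replace $\max\{f^{@}(v),q(v)\}$ by $f^{@}(v)$ you need $f^{@}(v)\ge q(v)$; indeed, at $\alpha=0$ the mixed inequality reads $f^{@}(y)\ge q(y)$, which is exactly that hypothesis.
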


\begin{proof}
Since $f$ and $f^{@}$ are convex, for every $x\in B$ we have%
\begin{eqnarray*}
conv~\min \left\{ f,f^{@}\right\} \left( x\right) &=&\inf_{\substack{ u,v\in
B  \\ \alpha ,\beta \geq 0,\text{ }\alpha +\beta =1  \\ \alpha u+\beta v=x}}%
\left\{ \alpha f\left( u\right) +\beta f^{@}\left( v\right) \right\} \\
&\geq &\inf_{\substack{ u,v\in B  \\ \alpha ,\beta \geq 0,\text{ }\alpha
+\beta =1  \\ \alpha u+\beta v=x}}q\left( \alpha u+\beta v\right) =q\left(
x\right) ,
\end{eqnarray*}%
the inequality following from the assumptions $f\geq q$ and $f^{@}\geq q$
and Lemma \ref{fund ineq}.
\end{proof}

\section{SSDB\ spaces}

We say that $\left( B,\lfloor \cdot ,\cdot \rfloor ,\Vert \cdot \Vert
\right) $ is a symmetrically self-dual Banach ({SSDB}) space if $\left(
B,\lfloor \cdot ,\cdot \rfloor \right) $ is an SSD space, $\left( B,\Vert
\cdot \Vert \right) $ is a Banach space, the dual $B^{\ast }$ is exactly $%
\{\lfloor \cdot ,b\rfloor :b\in B\}$ and the map $i:B\rightarrow B^{\ast }$
defined by $i(b)=\lfloor \cdot ,b\rfloor $ is a surjective isometry. In this
case, the quadratic form $q$ is continuous. By \cite[Proposition 3]{JE08} we
know that every SSDB space is reflexive as a Banach space. If $A$ is convex
in an SSDB\ space then $A^{w}=\overline{A}.$

Let $B$ be an SSDB space. In this case, for a proper convex function $%
f:B\rightarrow \mathbb{R}\cup \{+\infty \}$ it is easy to see that $%
f^{@}=f^{\ast }\circ i,$ where $f^{\ast }:B^{\ast }\rightarrow \mathbb{R}%
\cup \{+\infty \}$ is the Banach space conjugate of $f.$ Define $%
g_{0}:B\rightarrow \mathbb{R}$ by $g_{0}(b):=\frac{1}{2}\left\Vert
b\right\Vert ^{2}.$ Then for all $b^{\ast }\in B^{\ast }$, $g_{0}^{\ast
}\left( b^{\ast }\right) =\frac{1}{2}\left\Vert b^{\ast }\right\Vert ^{2}.$

\subsection{A characterization of maximally $q$-positive sets in SSDB spaces}

\begin{lemma}
\label{P-q}The set $\mathcal{P}_{q}(g_{0})$\bigskip $=\left\{ x\in
B:g_{0}(x)=q\left( x\right) \right\} $ is maximally $q$-positive and the set 
$\mathcal{P}_{-q}(g_{0})$\bigskip $=\left\{ x\in B:g_{0}(x)=-q\left(
x\right) \right\} $ is maximally $-q$-positive.
\end{lemma}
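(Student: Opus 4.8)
The plan is to work with $\mathcal{P}_q(g_0)$ first and then deduce the statement for $\mathcal{P}_{-q}(g_0)$ by an obvious symmetry (replacing $\lfloor\cdot,\cdot\rfloor$ by $-\lfloor\cdot,\cdot\rfloor$, which turns $q$ into $-q$ and leaves $g_0$ unchanged). For $\mathcal{P}_q(g_0)$, I would exploit the fact that $g_0 \in \mathcal{PC}_q(B)$: indeed $g_0(b) = \tfrac12\|b\|^2 \ge \tfrac12\lfloor b,b\rfloor = q(b)$ for every $b$, by Cauchy--Schwarz together with the isometry property $\|i(b)\| = \|b\|$, since $\lfloor b,b\rfloor = \langle i(b),b\rangle \le \|i(b)\|\,\|b\| = \|b\|^2$. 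So $g_0$ is a proper, $w(B,B)$-lsc (it is norm-continuous and convex, hence weakly lsc) convex function with $g_0 \ge q$. Therefore $\mathcal{P}_q(g_0)$ is a $q$-representable set, in particular $q$-positive, by \cite[Lemma 19.8]{Si08}.

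The heart of the argument is to show $\mathcal{P}_q(g_0)$ is \emph{maximally} $q$-positive, i.e. that $b$ being $q$-positively related to $\mathcal{P}_q(g_0)$ forces $g_0(b) = q(b)$. The key observation is that $g_0^{@} = g_0^{\ast}\circ i$ and, as noted in the text, $g_0^{\ast}(b^{\ast}) = \tfrac12\|b^{\ast}\|^2$, so $g_0^{@}(b) = \tfrac12\|i(b)\|^2 = \tfrac12\|b\|^2 = g_0(b)$; that is, $g_0$ is self-conjugate with respect to $\lfloor\cdot,\cdot\rfloor$. Now for any $b \in B$, equality in Fenchel--Young $g_0(b) + g_0^{@}(b) \ge \lfloor b,b\rfloor$ holds iff $\|b\|^2 = \lfloor b,b\rfloor = \langle i(b),b\rangle$, which by the equality case of Cauchy--Schwarz (and $\|i(b)\|=\|b\|$) is exactly the condition $g_0(b) = q(b)$. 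In other words $\mathcal{P}_q(g_0) = G_{g_0}$ in the notation of Subsection 3.1, so by Proposition \ref{prop1} applied with $f = g_0$ (note $g_0$ is $w(B,B)$-lsc and $G_{g_0}\ne\emptyset$ since $0 \in G_{g_0}$), the set is $q$-representable. To upgrade to maximality I would argue directly: suppose $\lfloor b-x, b-x\rfloor \ge 0$, i.e. $q(b-x)\ge 0$, for all $x \in \mathcal{P}_q(g_0)$. Since $g_0 \ge q$ everywhere and $g_0$ equals $q$ precisely on $\mathcal{P}_q(g_0)$, I want to produce a point of $\mathcal{P}_q(g_0)$ witnessing failure unless $b$ itself lies there. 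The natural candidate is the (unique, by reflexivity and strict convexity of $g_0$ after renorming if needed) minimizer of $y \mapsto g_0(y) - \lfloor y,b\rfloor$; calling it $x_b$, the optimality condition is $i(b) \in \partial g_0(x_b)$ in the Banach sense, equivalently (by $g_0^{\ast}=g_0$-type duality) $x_b \in \mathcal{P}_q(g_0)$ with $\lfloor x_b,b\rfloor = g_0(x_b) + g_0^{@}(b) \ge \lfloor b,b\rfloor\,/\,$..., and then $q(b-x_b)\ge 0$ combined with $g_0(x_b)=q(x_b)$ and $g_0(b)\ge q(b)$ forces $g_0(b)=q(b)$ after expanding $q(b-x_b)$ and using the Fenchel equality for $x_b$.

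The main obstacle I anticipate is making the last step fully rigorous without circularity: concretely, proving that the point $x_b$ above actually satisfies $q(b - x_b) \le 0$ unless $g_0(b) = q(b)$, so that the $q$-positive-relatedness hypothesis bites. I expect this to come down to the chain $2q(b-x_b) = \lfloor b,b\rfloor - 2\lfloor b,x_b\rfloor + \lfloor x_b,x_b\rfloor = 2q(b) - 2\lfloor b,x_b\rfloor + 2g_0(x_b)$, together with the Fenchel--Young equality at $x_b$ in the form $g_0(x_b) + g_0(b) = \lfloor b,x_b\rfloor$ (using $g_0^{@}=g_0$), which collapses the right-hand side to $2q(b) - 2g_0(b) \le 0$; hence $q(b-x_b)\le 0$, and since also $q(b-x_b)\ge 0$ by hypothesis, we get $q(b-x_b)=0$ and $g_0(b)=q(b)$, i.e. $b \in \mathcal{P}_q(g_0)$. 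Once this is in place, maximality follows and the $-q$ statement is immediate from the sign-flip symmetry. I would present the $g_0$-is-self-conjugate computation and the Fenchel-equality characterization as the two clean lemmatic facts, then run the three-line expansion above to finish.
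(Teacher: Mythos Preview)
Your observation that $g_0^{@}=g_0^{\ast}\circ i=g_0$ and your sign--flip symmetry for the $-q$ statement are exactly what the paper uses; the paper then simply invokes \cite[Thm.~4.3(b)]{Si07} (or \cite[Thm.~2.7]{JE08}), which says that in an SSDB space any lsc $f\in\mathcal{PC}_q(B)$ with $f^{@}\geq f$ has $\mathcal{P}_q(f)$ maximally $q$--positive. So the short route is: self--conjugacy of $g_0$ plus that cited theorem.

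Your attempt to replace the citation by a direct argument has a genuine gap. You minimize $y\mapsto g_0(y)-\lfloor y,b\rfloor$ and call the minimizer $x_b$; the optimality condition $i(b)\in\partial g_0(x_b)$ gives $\lfloor b,x_b\rfloor=\|x_b\|^2=\|b\|^2$, but it does \emph{not} give $\lfloor x_b,x_b\rfloor=\|x_b\|^2$, i.e.\ $x_b\in\mathcal{P}_q(g_0)$. Your chain then silently uses this: you replace $\lfloor x_b,x_b\rfloor$ by $2g_0(x_b)$, whereas in fact $\lfloor x_b,x_b\rfloor=2q(x_b)$, and the two agree only when $x_b\in\mathcal{P}_q(g_0)$. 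Since $x_b\notin\mathcal{P}_q(g_0)$ in general, the hypothesis ``$b$ is $q$--positively related to $\mathcal{P}_q(g_0)$'' cannot be applied to $x_b$, and the argument stalls. A concrete counterexample: take $B=\mathbb{R}^2$ with $\lfloor(a_1,a_2),(c_1,c_2)\rfloor=a_1c_2+a_2c_1$, so $q(a_1,a_2)=a_1a_2$, $g_0(a_1,a_2)=\tfrac12(a_1^2+a_2^2)$ and $\mathcal{P}_q(g_0)=\{(t,t):t\in\mathbb{R}\}$. For $b=(1,0)$ your minimization yields $x_b=(0,1)$, which satisfies $g_0(x_b)=\tfrac12\neq 0=q(x_b)$, hence $x_b\notin\mathcal{P}_q(g_0)$.

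If you want a self--contained proof, the minimization problem behind the cited theorem is different: one works with $y\mapsto f(y)+g_0(y-b)$ (here $f=g_0$) and uses Fenchel duality together with $g_0\geq -q$ to locate a point $x$ with $f(x)=q(x)$ \emph{and} $q(b-x)\leq -g_0(b-x)\leq 0$. That is the step your construction does not deliver.
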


\begin{proof}
To prove that $\mathcal{P}_{q}(g_{0})$ is maximally $q$-positive, apply \cite%
[Thm. 4.3(b)]{Si07} (see also \cite[Thm. 2.7]{JE08}) after observing that $%
g_{0}^{@}=g_{0}^{\ast }\circ i=g_{0}.$ Since replacing $q$ by $-q$ changes $%
\mathcal{P}_{q}(g_{0})$ into $\mathcal{P}_{-q}(g_{0}),$ it follows that $%
\mathcal{P}_{-q}(g_{0})$ is maximally $-q$-positive too.
\end{proof}

\bigskip

From now on, to distinguish the function $\Phi _{A}$ of $A\subset B$
corresponding to $q$ from that corresponding to $-q,$ we will use the
notations $\Phi _{q,A}$ and $\Phi _{-q,A},$ respectively. Notice that $\Phi
_{-q,\mathcal{P}_{-q}(g_{0})}$ is finite-valued;\ indeed,

\begin{eqnarray*}
\Phi _{-q,\mathcal{P}_{-q}(g_{0})}\left( x\right) &=&\sup_{a\in \mathcal{P}%
_{-q}(g_{0})}\left\{ \mathcal{-}\lfloor x,a\rfloor +q\left( a\right) \right\}
\\
&=&\sup_{a\in \mathcal{P}_{-q}(g_{0})}\left\{ -\left\langle x,i\left(
a\right) \right\rangle -g_{0}(a)\right\} \\
&=&\sup_{a\in \mathcal{P}_{-q}(g_{0})}\left\{ -\left\langle x,i\left(
a\right) \right\rangle -g_{0}^{\ast }\left( i\left( a\right) \right)
\right\} \leq g_{0}(x).
\end{eqnarray*}

\begin{theorem}
Let $B$ be an SSDB space and $A$ be a $q$-positive subset of $B$, and
consider the following statements:

(1) $A$ is maximally $q$-positive.

(2) $A+C=B$ for every maximally $-q$-positive set $C\subseteq B$ such that $%
\Phi _{-q,C}$ is finite-valued.

(3) There exists a set $C\subseteq B$ such that $A+C=B,$ and there exists $%
p\in C$ such that%
\begin{equation*}
q\left( z-p\right) <0\text{\qquad }\forall ~z\in C\setminus \left\{
p\right\} .
\end{equation*}

Then (1), (2) and (3) are equivalent.
\end{theorem}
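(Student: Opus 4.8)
The plan is to establish the cycle of implications $(1)\Rightarrow(2)\Rightarrow(3)\Rightarrow(1)$, using the Fitzpatrick-type function machinery from Section 3 together with Lemma \ref{P-q} and the finiteness observation preceding the statement.

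For $(1)\Rightarrow(2)$, let $C$ be a maximally $-q$-positive set with $\Phi_{-q,C}$ finite-valued, and fix an arbitrary $x\in B$. I want to produce $a\in A$ and $c\in C$ with $a+c=x$; equivalently, I want a point $c\in C$ with $x-c\in A$, i.e. (using \eqref{equation2}, since $A$ is maximally $q$-positive) with $\Phi_{q,A}(x-c)=q(x-c)$. Translating by $x$, consider the set $x-C$, which is maximally $-q$-positive as well (since $q$ is even and translation preserves $-q$-positivity), and its associated function; the finiteness of $\Phi_{-q,C}$ passes to $x-C$. The key computation is to compare $\Phi_{q,A}$ on $x-C$ with $q$ on $x-C$: I expect that for a maximally $q$-positive $A$ one has, for every $b\in B$,
\[
\Phi_{q,A}(b)+\Phi_{-q,\,x-C}(b)\ \geq\ \text{something forcing a coincidence point,}
\]
i.e. that $\Phi_{q,A}$ and the (finite-valued) conjugate-type function attached to $x-C$ must cross. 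Concretely: on the one hand $\Phi_{q,A}\geq q$ everywhere by \eqref{equation1}; on the other hand, because $\Phi_{-q,\,x-C}$ is everywhere finite and majorizes $-q$ on all of $B$ with equality exactly on $x-C$ (this is the analogue of \eqref{equation1}–\eqref{equation2} for the maximally $-q$-positive set $x-C$, valid since $\Phi_{-q,x-C}^{@}$-type arguments of Proposition \ref{prop0} apply after swapping $q\mapsto -q$), the convex function $b\mapsto \Phi_{q,A}(b)-\Phi_{-q,x-C}(b)$ is bounded below by $q-(-q)=2q$ only where... — this is the delicate point. The cleanest route is: form $g:=\Phi_{q,A}+\Phi_{-q,x-C}^{@}$ type combination and show it is a proper convex function $\geq q$ whose domain meets $x-C$, then invoke maximality to locate a point of $\mathcal P_q(\cdot)$. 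I would use Lemma \ref{fund ineq} (with $f=\Phi_{q,A}$) to control the convex combinations, and the finiteness of $\Phi_{-q,C}$ to guarantee the relevant infimum is attained or at least not $+\infty$.

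For $(2)\Rightarrow(3)$, simply take $C:=\mathcal P_{-q}(g_0)$. By Lemma \ref{P-q} this $C$ is maximally $-q$-positive, and by the displayed computation just before the theorem, $\Phi_{-q,\mathcal P_{-q}(g_0)}\leq g_0$ is finite-valued; hence $(2)$ gives $A+C=B$. The required point is $p:=0$: for $z\in C\setminus\{0\}$ one has $g_0(z)=-q(z)$ by definition of $\mathcal P_{-q}(g_0)$, so $q(z-0)=q(z)=-g_0(z)=-\tfrac12\|z\|^2<0$ since $z\neq 0$. (And $0\in C$ because $g_0(0)=0=-q(0)$.)

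For $(3)\Rightarrow(1)$, suppose $A+C=B$ with $p\in C$ strictly $(-q)$-dominated by $p$ as in the statement; I must show $A$ is maximally $q$-positive. Let $b\in A^{\pi}$, i.e. $q(b-a)\geq 0$ for all $a\in A$; I want $b\in A$. Since $A+C=B$, write $b-p$... more usefully, apply the hypothesis to the point $2b-p$ (or to $b$ shifted suitably): there exist $a\in A$, $c\in C$ with $a+c=b+(b-p)$, hmm — the right choice is to get $a-b=-(c-p)$, i.e. look for $a\in A,c\in C$ with $a+c=b+p$; wait: $a+c = b+ p$ gives $b-a = c-p$, hence $q(b-a)=q(c-p)\leq 0$ with equality iff $c=p$. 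Combined with $b\in A^{\pi}$ (so $q(b-a)\geq 0$) this forces $q(c-p)=0$, hence $c=p$ by the strict inequality in $(3)$, hence $a=b$, so $b=a\in A$. Thus $A^{\pi}=A$ and $A$ is maximally $q$-positive. The point $b+p$ lies in $B=A+C$, so such $a,c$ exist — no obstruction here.

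The main obstacle is the implication $(1)\Rightarrow(2)$: turning "maximally $q$-positive" into a genuine surjectivity (sum) statement. This is a Rockafellar-type sum theorem in the abstract SSDB setting, and the crux is guaranteeing that the infimal-convolution / crossing argument does not degenerate, which is exactly what the finiteness hypothesis on $\Phi_{-q,C}$ is there to ensure. I would lean on the reflexivity of $B$ (noted at the start of Section 4), on Proposition \ref{prop0} applied with $q$ replaced by $-q$ to get the representation properties of $C$, and on Lemma \ref{fund ineq} to push the convex-combination estimate through; the cited results \cite[Thm. 4.3]{Si07} / \cite[Thm. 2.7]{JE08} used in Lemma \ref{P-q} are likely the template for the needed existence step.
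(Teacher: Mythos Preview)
Your arguments for $(2)\Rightarrow(3)$ and $(3)\Rightarrow(1)$ are correct and coincide exactly with the paper's: take $C=\mathcal P_{-q}(g_0)$, $p=0$ for the first, and for the second write $b+p=a+c$ with $a\in A$, $c\in C$, so that $q(b-a)=q(c-p)$ is simultaneously $\geq 0$ (from $b\in A^\pi$) and $\leq 0$ (from the hypothesis on $C$), forcing $c=p$ and $b=a\in A$.

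The genuine gap is $(1)\Rightarrow(2)$. You correctly diagnose that this is the surjectivity/sum step and that the finiteness of $\Phi_{-q,C}$ is the crucial hypothesis, but you do not supply an argument: the passages ``I expect that \dots'', ``this is the delicate point'', ``the cleanest route is: form $g:=\dots$'', and ``I would lean on \dots'' are not proofs, and neither Lemma~\ref{fund ineq} nor the results cited in Lemma~\ref{P-q} yield the needed existence statement in any direct way. The paper's actual mechanism is the Fenchel--Rockafellar duality theorem. Translating $A$ to $A'=A-x_0$, one has the primal bound
\[
\Phi_{q,A'}(b)+\Phi_{-q,C}(-b)\ \geq\ q(b)+(-q)(-b)=0\qquad\text{for all }b\in B,
\]
and since $\Phi_{-q,C}$ is finite-valued and lsc on the Banach space $B$ it is \emph{continuous}, so the qualification condition for Fenchel--Rockafellar holds and there exists $y^{*}\in B^{*}$ with $\Phi_{q,A'}^{*}(y^{*})+\Phi_{-q,C}^{*}(y^{*})\leq 0$. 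Writing $y^{*}=i(b)$ and using Proposition~\ref{prop0}(1) in the form $\Phi_{q,A'}^{*}\circ i=\Phi_{q,A'}^{@}\geq\Phi_{q,A'}\geq q$ (and the $-q$ analogue), the chain of inequalities collapses to equalities, giving $\Phi_{q,A'}(b)=q(b)$ and $\Phi_{-q,C}(-b)=-q(-b)$; by maximality this means $b\in A'$ and $-b\in C$, whence $x_0=(x_0+b)+(-b)\in A+C$. Your proposal never isolates this duality step, which is where the SSDB structure (the isometry $i$ and the Banach conjugate $f^{*}$) and the finiteness hypothesis are actually used.
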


\begin{proof}
\emph{(1) }$\Longrightarrow $\emph{\ (2).}\textbf{\ }Let $x_{0}\in B$ and $%
A^{\prime }:=A-\left\{ x_{0}\right\} .$ We have%
\begin{equation*}
\Phi _{q,A^{\prime }}\left( x\right) +\Phi _{-q,C}\left( -x\right) \geq
q\left( x\right) -q\left( -x\right) =0\text{\qquad }\forall ~x\in C.
\end{equation*}%
Hence, as $\Phi _{-q,C}$ is continuous because it is lower semicontinuous
and finite-valued, by the Fenchel-Rockafellar duality theorem there exists $%
y^{\ast }\in B^{\ast }$ such that%
\begin{equation*}
\Phi _{q,A^{\prime }}^{\ast }\left( y^{\ast }\right) +\Phi _{-q,C}^{\ast
}\left( y^{\ast }\right) \leq 0.
\end{equation*}%
Since, by Proposition \ref{prop0}\textit{(1)}, $\Phi _{q,A^{\prime }}^{\ast
}\circ i=\Phi _{q,A^{\prime }}^{@}\geq $ $\Phi _{q,A^{\prime }}$ and,
correspondingly, $\Phi _{-q,C}^{\ast }\circ \left( -i\right) =\Phi
_{-q,C}^{@}\geq \Phi _{-q,C},$ we thus have%
\begin{eqnarray*}
0 &\geq &\left( \Phi _{q,A^{\prime }}^{\ast }\circ i\right) \left(
i^{-1}\left( y^{\ast }\right) \right) +\left( \Phi _{-q,C}^{\ast }\circ
\left( -i\right) \right) \left( -i^{-1}\left( y^{\ast }\right) \right) \\
&\geq &\Phi _{q,A^{\prime }}\left( i^{-1}\left( y^{\ast }\right) \right)
+\Phi _{-q,C}\left( -i^{-1}\left( y^{\ast }\right) \right) \geq q\left(
i^{-1}\left( y^{\ast }\right) \right) -q\left( -i^{-1}\left( y^{\ast
}\right) \right) =0.
\end{eqnarray*}%
Therefore%
\begin{equation*}
\Phi _{q,A^{\prime }}\left( i^{-1}\left( y^{\ast }\right) \right) =q\left(
i^{-1}\left( y^{\ast }\right) \right) \text{ and }\Phi _{-q,C}\left(
-i^{-1}\left( y^{\ast }\right) \right) =-q\left( -i^{-1}\left( y^{\ast
}\right) \right) ,
\end{equation*}%
that is,%
\begin{equation*}
i^{-1}\left( y^{\ast }\right) \in A^{\prime }\text{ and }-i^{-1}\left(
y^{\ast }\right) \in C,
\end{equation*}%
which implies that%
\begin{equation*}
x_{0}=x_{0}+i^{-1}\left( y^{\ast }\right) -i^{-1}\left( y^{\ast }\right) \in
x_{0}+A^{\prime }+C=A+C.
\end{equation*}

\emph{(2) }$\Longrightarrow $\emph{\ (3). }Take $C:=\mathcal{P}_{-q}(g_{0})$
(see Lemma \ref{P-q}) and $p:=0.$

\emph{(3) }$\Longrightarrow $\emph{\ (1). }Let $x\in A^{\pi },$ and take $p$
as in \textit{(3)}. Since $x+p\in B=A+C,$ we have $x+p=y+z$ for some $y\in A$
and $z\in C.$ We have $x-y=z-p;$ hence, since $x\in A^{\pi }$ and $y\in A,$
we get $0\leq q\left( x-y\right) =q\left( z-p\right) \leq 0.$ Therefore $%
q\left( z-p\right) =0,$ which implies $z=p.$ Thus from $x+p=y+z$ we obtain $%
x=y\in A.$ This proves that $A^{\pi }\subset A,$ which, together with the
fact that $A$ is $q$-positive, shows that $A$ is maximally $q$-positive.
\end{proof}

\begin{corollary}
One has%
\begin{equation*}
\mathcal{P}_{q}(g_{0})+\mathcal{P}_{-q}(g_{0})=B.
\end{equation*}
\end{corollary}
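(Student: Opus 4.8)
The plan is to read the corollary off the theorem just proved, with the choices $A := \mathcal{P}_q(g_0)$ and $C := \mathcal{P}_{-q}(g_0)$.

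First I would recall from Lemma \ref{P-q} that $\mathcal{P}_q(g_0)$ is maximally $q$-positive; in particular it is a $q$-positive subset of $B$, so the preceding theorem applies to it and its statement (1) holds. By the equivalence $(1)\Leftrightarrow(2)$ in that theorem, statement (2) then holds for $A=\mathcal{P}_q(g_0)$: that is, $A+C'=B$ for every maximally $-q$-positive set $C'\subseteq B$ such that $\Phi_{-q,C'}$ is finite-valued.

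Next I would check that $C=\mathcal{P}_{-q}(g_0)$ is an admissible choice of such a $C'$. Lemma \ref{P-q} gives that $\mathcal{P}_{-q}(g_0)$ is maximally $-q$-positive, and the computation displayed immediately before the theorem shows that $\Phi_{-q,\mathcal{P}_{-q}(g_0)}\leq g_0$ on $B$, so it is finite-valued. Hence the hypotheses of (2) are met with $C'=\mathcal{P}_{-q}(g_0)$, and applying (2) yields $\mathcal{P}_q(g_0)+\mathcal{P}_{-q}(g_0)=A+C=B$.

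I do not expect any genuine obstacle here: all the substantive work is already contained in the theorem and in Lemma \ref{P-q}, and the proof is simply the specialization of the theorem to the maximally $q$-positive set $\mathcal{P}_q(g_0)$ paired with the maximally $-q$-positive set $\mathcal{P}_{-q}(g_0)$, the latter qualifying for clause (2) precisely because $\Phi_{-q,\mathcal{P}_{-q}(g_0)}$ was shown above to be finite-valued. The only point needing the slightest care is remembering to invoke that finite-valuedness when selecting $C'$.
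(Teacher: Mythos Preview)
Your proposal is correct and matches the paper's own proof essentially verbatim: the paper simply notes that $\mathcal{P}_q(g_0)$ is maximally $q$-positive by Lemma \ref{P-q} and then invokes the implication $(1)\Rightarrow(2)$ of the preceding theorem. Your added sentence verifying that $\mathcal{P}_{-q}(g_0)$ is maximally $-q$-positive with $\Phi_{-q,\mathcal{P}_{-q}(g_0)}$ finite-valued makes explicit exactly what the paper leaves implicit.
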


\begin{proof}
Since the set $\mathcal{P}_{q}(g_{0})$ is maximally $q$-positive by Lemma %
\ref{P-q}, the result follows from the implication \emph{(1) }$%
\Longrightarrow $\emph{\ (2) }in the preceding theorem.
\end{proof}

\subsection{Minimal convex functions on SSDB spaces bounded below by $q$}

\begin{theorem}
If $B$ is an SSDB\ space and $f:B\rightarrow \mathbb{R\cup }\left\{ +\infty
\right\} $ is a minimal element of the set of convex functions minorized by $%
q$ then $f=\Phi _{M}$ for some maximally $q$-positive set $M\subset B.$
\end{theorem}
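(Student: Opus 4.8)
The plan is to combine the abstract characterization of minimal convex functions bounded below by $q$ (Theorem~\ref{minimal}) with the SSDB-space machinery. Let $f$ be a minimal element of the set of convex functions minorized by $q$. By Theorem~\ref{minimal} we already know $f^{@}\geq f$ on $B$, and $f$ is proper. The first step is to upgrade this: I claim $f=f^{@}$ and $f\in\mathcal{PC}_q(B)$ is $w(B,B)$-lsc. Indeed, in an SSDB space $f^{@}=f^{*}\circ i$, so $f^{@}$ is automatically $w(B,B)$-lsc (being a conjugate), convex, and proper since $f^{@}\geq f\geq q$ is somewhere finite. Moreover $f^{@}\geq q$ as well: applying Lemma~\ref{fund ineq} with $x=y$ and $\alpha=\beta=\tfrac12$ gives $\tfrac12\max\{f(x),q(x)\}+\tfrac12\max\{f^{@}(x),q(x)\}\geq q(x)$, which together with $f\geq q$ forces $\max\{f^{@}(x),q(x)\}\geq q(x)$ trivially — I instead use the generalized Fenchel–Young inequality $f(x)+f^{@}(x)\geq 2q(x)$ together with minimality to pin things down. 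So $f^{@}$ is a $w(B,B)$-lsc convex function minorized by $q$ with $f^{@}\leq f$ (since $f^{@@}\leq f^{@}$ would give the reverse, but here we want $f^{@}\leq f$); by the minimality of $f$ we conclude $f^{@}=f$. Hence $f$ is its own intrinsic conjugate, $w(B,B)$-lsc, proper, convex, and $f\geq q$.

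The second step is to identify $M:=\mathcal{P}_q(f)=\{b\in B: f(b)=q(b)\}$ and show it is maximally $q$-positive. Since $f\geq q$, the set $M$ is $q$-positive by \cite[Lemma 19.8]{Si08} (the argument: if $f(b)=q(b)$ and $f(c)=q(c)$ then by Fenchel–Young with $f=f^{@}$, $q(b)+q(c)=f(b)+f^{@}(c)\geq\lfloor b,c\rfloor$, so $q(b-c)=q(b)-\lfloor b,c\rfloor+q(c)\geq 0$). For maximality I invoke exactly the result cited in the proof of Lemma~\ref{P-q}, namely \cite[Thm. 4.3(b)]{Si07} (equivalently \cite[Thm. 2.7]{JE08}): if $f$ is a proper convex $w(B,B)$-lsc function with $f\geq q$ and $f^{@}=f$ (i.e.\ $f$ is "self-dual" in the appropriate sense), then $\mathcal{P}_q(f)$ is maximally $q$-positive. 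The nontrivial hypothesis of that theorem is precisely $f=f^{@}$, which we secured in the first step.

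The third and final step is to show $f=\Phi_M$. One inclusion of inequality is general: since $M=\mathcal{P}_q(f)$ and $f\geq q$, we have for every $b\in B$ that $f(a)=q(a)$ for $a\in M$, hence $\Phi_M(b)=\sup_{a\in M}\{\lfloor b,a\rfloor-q(a)\}=\sup_{a\in M}\{\lfloor b,a\rfloor-f(a)\}\leq f^{@}(b)=f(b)$. For the reverse inequality $\Phi_M\geq f$, I use that $M$ is maximally $q$-positive: by \eqref{equation1} and \eqref{equation2}, $\Phi_M$ itself lies in $\mathcal{PC}_q(B)$, is $w(B,B)$-lsc, and satisfies $\mathcal{P}_q(\Phi_M)=M=\mathcal{P}_q(f)$. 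Now $\Phi_M\leq f$ shows $\Phi_M$ is a convex function minorized by $q$ that is pointwise $\leq f$; by the minimality of $f$ we get $\Phi_M=f$, as desired.

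I expect the main obstacle to be the first step — rigorously extracting $f=f^{@}$ from the bare conclusion $f^{@}\geq f$ of Theorem~\ref{minimal}. The delicate point is that $f^{@}$ need not a priori be minorized by $q$; one must run the Fenchel–Young/Lemma~\ref{fund ineq} estimate carefully to see that $f^{@}\geq q$, and then argue that $f^{@}$ (being $w(B,B)$-lsc, which is automatic only in the SSDB setting via $f^{@}=f^{*}\circ i$) is a legitimate competitor in the minimization problem, so that $f^{@}\leq f$ and hence equality. Once $f$ is self-dual, lsc, and $\geq q$, the rest is a direct appeal to the cited maximality criterion and a second use of minimality.
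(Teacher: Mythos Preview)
Your overall strategy---set $M=\mathcal{P}_q(f)$, show it is maximally $q$-positive via the cited theorem, then use minimality of $f$ against $\Phi_M$---is exactly the paper's. The gap is your first step: you cannot conclude $f=f^{@}$, and in fact this equality is \emph{false} in general. Theorem~\ref{minimal} gives $f^{@}\geq f$, not $f^{@}\leq f$, so minimality of $f$ says nothing about $f^{@}$ (minimality lets you replace $f$ by a smaller competitor, not a larger one; your parenthetical already signals the trouble). Concretely, once one knows $f=\Phi_M$ for a maximally $q$-positive $M$, Proposition~\ref{prop0}(1) gives only $\Phi_M\leq\Phi_M^{@}$, and the inequality is strict already in the classical monotone case (Fitzpatrick function versus $g+g^{*}$ for a subdifferential). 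So the self-duality route is a dead end, and since your lower-semicontinuity argument and your Step~3 both rest on $f=f^{@}$, they inherit the gap.

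Neither place actually needs $f=f^{@}$. For lower semicontinuity the paper compares $f$ with its lsc closure $\bar f$: in an SSDB space $q$ is continuous, so $\bar f\geq q$, and minimality forces $\bar f=f$. For the maximality of $\mathcal{P}_q(f)$, the hypothesis of \cite[Thm.~4.3(b)]{Si07}/\cite[Thm.~2.7]{JE08} is $f^{@}\geq f$ (hence $f^{@}\geq q$), not $f^{@}=f$; this is precisely what Theorem~\ref{minimal} delivers. Finally, the inequality $\Phi_M\leq f$ in your third step should not be routed through $f^{@}$: it follows directly from convexity of $f$ together with $f|_M=q|_M$ (for $a\in M$, $b\in B$, bound $f((1-t)a+tb)$ above by $(1-t)q(a)+tf(b)$ and below by $q((1-t)a+tb)$, then let $t\to0^{+}$), or one can cite \cite[Thm.~2.2]{JE08} as the paper does.
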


\begin{proof}
We first observe that $f$ is lower semicontinuous; indeed, this is a
consequence of its minimality and the fact that its lower semicontinuous
closure is convex and minorized by $q$ because $q$ is continuous. By Theorem %
\ref{minimal} and \cite[Thm. 4.3(b)]{Si07} (see also \cite[Thm. 2.7]{JE08}),
the set $\mathcal{P}_{q}(f)$ is maximally $q$-positive, and hence $\Phi _{%
\mathcal{P}_{q}(f)}\geq q.$ From \cite[Thm. 2.2]{JE08} we deduce that $\Phi
_{\mathcal{P}_{q}(f)}\leq f,$ which, by the minimality of $f,$ implies that $%
\Phi _{\mathcal{P}_{q}(f)}=f.$
\end{proof}

\section{Examples}

\subsection{Lipschitz mappings between Hilbert spaces}

Let $K>0.$ Let $H_{1},H_{2}$ be two real Hilbert spaces and let $f:D\subset
H_{1}\rightarrow H_{2}$ be a $K$-Lipschitz mapping, i.e. 
\begin{equation}
\Vert f(x_{1})-f(y_{1})\Vert _{H_{2}}\leq K\Vert x_{1}-y_{1}\Vert
_{H_{1}},\qquad \forall \;x_{1},y_{1}\in D.  \label{one}
\end{equation}

\begin{remark}
\label{rem0}It is well known that there exists an extension $\tilde{f}%
:H_{1}\rightarrow H_{2}$ which is $K$-Lipschitz (see \cite{K34, V45}). Let $%
D\subset H_{1}$; we will denote by $\mathcal{F(}D\mathcal{)}$ the family of $%
K$-Lipschitz mappings defined on $D$ and by $\mathcal{F}:\mathcal{=F}\left(
H_{1}\right) $ the family of $K$-Lipschitz mappings defined everywhere on $%
H_{1}$.
\end{remark}

\begin{proposition}
\label{pzero}Let $H_{1},H_{2}$ be two real Hilbert spaces, let $%
B=H_{1}\times H_{2}$ and let $\left\lfloor \cdot ,\cdot \right\rfloor
:B\times B\rightarrow \mathbb{R}$ be the bilinear form defined by 
\begin{equation}
\left\lfloor (x_{1},x_{2}),(y_{1},y_{2})\right\rfloor =K^{2}\langle
x_{1},y_{1}\rangle _{H_{1}}-\langle x_{2},y_{2}\rangle _{H_{2}}.
\label{zero}
\end{equation}%
Then

(1) A nonempty set $A\subset B$ is $q$-positive if and only if there exists $%
f\in \mathcal{F}(P_{H_{1}}(A))$ such that $A=graph(f)$;

(2) A set $A\subset B$ is maximally $q$-positive if and only if there exists 
$f\in \mathcal{F}$ such that $A=graph(f)$.
\end{proposition}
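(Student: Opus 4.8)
The plan is to reduce both statements to the inequality
$q(b-c)\ge 0 \iff \|f(x_1)-f(y_1)\|_{H_2}^2\le K^2\|x_1-y_1\|_{H_1}^2$
where $b=(x_1,x_2)$, $c=(y_1,y_2)$, unpacking the definition $q(b)=\tfrac12\lfloor b,b\rfloor=\tfrac12\bigl(K^2\|x_1\|_{H_1}^2-\|x_2\|_{H_2}^2\bigr)$, so that $q(b-c)=\tfrac12\bigl(K^2\|x_1-y_1\|_{H_1}^2-\|x_2-y_2\|_{H_2}^2\bigr)$. For (1), first suppose $A=\operatorname{graph}(f)$ with $f$ a $K$-Lipschitz map on $P_{H_1}(A)$. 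Then for $b=(x_1,f(x_1))$ and $c=(y_1,f(y_1))$ in $A$, the Lipschitz bound (\ref{one}) gives exactly $q(b-c)\ge 0$, so $A$ is $q$-positive. Conversely, suppose $A$ is $q$-positive. The first task is to show $A$ is the graph of a function on $D:=P_{H_1}(A)$: if $(x_1,x_2),(x_1,x_2')\in A$, then $q$-positivity applied to this pair gives $-\tfrac12\|x_2-x_2'\|_{H_2}^2\ge 0$, forcing $x_2=x_2'$. Hence there is a well-defined $f:D\to H_2$ with $A=\operatorname{graph}(f)$, and $q$-positivity of $A$ reads precisely as the $K$-Lipschitz condition for $f$, i.e. $f\in\mathcal F(D)$.

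For (2), I would use (1) together with the extension fact recorded in Remark~\ref{rem0}. If $A$ is maximally $q$-positive, it is in particular $q$-positive, so by (1) $A=\operatorname{graph}(f)$ for some $K$-Lipschitz $f$ on $D=P_{H_1}(A)$. By Kirszbraun's theorem there is a $K$-Lipschitz extension $\tilde f\in\mathcal F=\mathcal F(H_1)$; then $\operatorname{graph}(\tilde f)$ is $q$-positive (again by (1)) and contains $A=\operatorname{graph}(f)$. Maximality forces $A=\operatorname{graph}(\tilde f)$, and in particular $D=H_1$ and $f=\tilde f\in\mathcal F$. Conversely, if $A=\operatorname{graph}(f)$ with $f\in\mathcal F$, then $A$ is $q$-positive by (1); to see it is maximal, take any $b=(x_1,x_2)$ that is $q$-positively related to $A$. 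Applying $q(b-c)\ge 0$ with $c=(x_1,f(x_1))\in A$ gives $-\tfrac12\|x_2-f(x_1)\|_{H_2}^2\ge 0$, hence $x_2=f(x_1)$ and $b\in A$. Thus $A^\pi=A$ and $A$ is maximally $q$-positive.

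The only genuinely non-routine ingredient is the Kirszbraun--Valentine extension theorem for $K$-Lipschitz maps between Hilbert spaces, which is exactly what Remark~\ref{rem0} is there to supply; everything else is a direct expansion of the bilinear form (\ref{zero}) and the definitions of $q$-positivity, $A^\pi$, and maximal $q$-positivity. A small point to handle carefully is that in the converse direction of (1) one must verify $D=P_{H_1}(A)$ is exactly the domain on which the Lipschitz estimate is asserted, so that the statement "$f\in\mathcal F(P_{H_1}(A))$" is literally what $q$-positivity delivers — but this is immediate from the construction of $f$. I expect the main (mild) obstacle to be simply bookkeeping the sign conventions so that the $H_2$-component enters with a minus sign and the inequality $q(b-c)\ge 0$ comes out as the Lipschitz bound rather than its reverse.
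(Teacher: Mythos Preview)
Your proposal is correct and follows essentially the same route as the paper: expand $q(b-c)$ via (\ref{zero}) to turn $q$-positivity into the $K$-Lipschitz inequality, use this both to show single-valuedness and the Lipschitz bound for (1), and for (2) combine (1) with the Kirszbraun--Valentine extension theorem for one direction and test a $q$-positively related point against $(x_1,f(x_1))$ for the other. The paper phrases the latter step as ``$graph(f)\cup\{(x,y)\}$ is $q$-positive, so by (1) $y=f(x)$,'' which amounts to exactly the computation you wrote out.
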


\begin{proof}
\textit{(1). }If $A=graph(f)$ with $f\in \mathcal{F}(P_{H_{1}}(A)),$ it is
easy to see that $A$ is $q$-positive.

Assume that $A\subset B$ is $q$-positive. From the definition we have that
for all $(x_{1},y_{1}),(x_{2},y_{2})\in A$, 
\begin{equation*}
0\leq q\left( (x_{1},y_{1})-(x_{2},y_{2})\right) =\frac{1}{2}\left(
K^{2}\Vert x_{1}-x_{2}\Vert _{H_{1}}^{2}-\Vert y_{1}-y_{2}\Vert
_{H_{2}}^{2}\right) .
\end{equation*}%
Equivalently,%
\begin{equation}
\Vert y_{1}-y_{2}\Vert _{H_{2}}\leq K\Vert x_{1}-x_{2}\Vert _{H_{1}}.
\label{two}
\end{equation}%
For $x\in P_{H_{1}}(A)$ we define $f(x)=\{y:(x,y)\in A\}$. We will show that 
$f$ is a $K$-Lipschitz mapping. If $y_{1},y_{2}\in f(x),$ from (\ref{two}) $%
y_{1}=y_{2}$, so $f$ is single-valued. Now, for $x_{1},x_{2}\in P_{H_{1}}(A)$
from (\ref{two}) we have that 
\begin{equation*}
\Vert f(x_{1})-f(x_{2})\Vert _{H_{2}}\leq K\Vert x_{1}-x_{2}\Vert _{H_{1}},
\end{equation*}%
which shows that $f\in \mathcal{F}(P_{H_{1}}(A))$.\newline
\textit{(2). }Let $A\subset B$ be maximally $q$-positive. From \textit{(1)},
there exists $f\in \mathcal{F}(P_{H_{1}}(A))$ such that $A=graph(f)$, and
from the Kirszbraun-Valentine extension theorem \cite{K34, V45} $f$ has a $K$%
-Lipschitz extension $\tilde{f}$ defined everywhere on $H_{1}$; since $graph(%
\tilde{f})$ is also $q$-positive we must have $f=\tilde{f}$. Now, let $f\in 
\mathcal{F}$ and $(x,y)\in H_{1}\times H_{2}$ be $q$-positively related to
every point in $graph(f)$. We have that $graph(f)\cup \{(x,y)\}$ is $q$%
-positive, so from (1) we easily deduce that $y=f(x)$. This finishes the
proof of \textit{(2).}
\end{proof}

\bigskip

Clearly, the $w\left( B,B\right) $ topology of the SSD space $(B,\lfloor
\cdot ,\cdot \rfloor )$ coincides with the weak topology of the product
Hilbert space $H_{1}\times H_{2}.$ Therefore, every $q$-representable set is
closed, so that it corresponds to a $K$-Lipschitz mapping with closed graph.
Notice that, by the Kirszbraun-Valentine extension theorem, a $K$-Lipschitz
mapping between two Hilbert spaces has a closed graph if and only if its
domain is closed. The following example shows that not every $K$-Lipschitz
mapping with closed domain has a $q$-representable graph.

\begin{example}
Let $H_{1}:=\mathbb{R}=:H_{2}$ and let $f:\left\{ 0,1\right\} \rightarrow
H_{2}$ be the restriction of the identity mapping$.$ Clearly, $f$ is
nonexpansive, so we will consider the SSD space corresponding to $K=1.$ Then
we will show that the smallest $q$-representable set containing $graph(f)$
is the graph of the restriction $\widehat{f}$ of the identity to the closed
interval $\left[ 0,1\right] .$ Notice that this graph is indeed $q$%
-representable, since the lsc function $\delta _{graph(\widehat{f})}$
belongs to $\mathcal{PC}_{q}\left( B\right) $ and one has $graph(\widehat{f}%
)=\mathcal{P}_{q}\left( \delta _{graph(\widehat{f})}\right) .$ We will see
that $graph(\widehat{f})\subset \mathcal{P}_{q}\left( \varphi \right) $ for
every $\varphi \in \mathcal{PC}_{q}\left( B\right) $ such that $%
graph(f)\subset \mathcal{P}_{q}\left( \varphi \right) .$ Indeed, for $t\in %
\left[ 0,1\right] $ one has $\varphi \left( t,t\right) \leq \left(
1-t\right) \varphi \left( 0,0\right) +t\varphi \left( 1,1\right) =\left(
1-t\right) q\left( 0,0\right) +tq\left( 1,1\right) =0=q\left( t,t\right) ;\ $%
hence $\left( t,t\right) \in \mathcal{P}_{q}\left( \varphi \right) ,$ which
proves the announced inclusion.
\end{example}

Our next two results provide sufficient conditions for $q$-representability
in the SSD space we are considering.

\begin{proposition}
\label{closed domain}Let $H_{1},H_{2},$ $B$ and $\left\lfloor \cdot ,\cdot
\right\rfloor $ be as in Proposition \ref{pzero} and let $f:D\subset
H_{1}\rightarrow H_{2}$ be a $K^{\prime }$-Lipschitz mapping, with $%
0<K^{\prime }<K.\ $If $D$ is nonempty and closed, then $graph(f)$ is $q$%
-representable.
\end{proposition}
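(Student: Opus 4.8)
The plan is to exhibit an explicit $w(B,B)$-lsc function in $\mathcal{PC}_q(B)$ whose $q$-set is exactly $\mathrm{graph}(f)$, using the strict Lipschitz gap $K'<K$ to force the needed inequality. First I would recall that, by the Kirszbraun--Valentine theorem, since $f$ is $K'$-Lipschitz on the closed set $D$ it extends to a $K'$-Lipschitz (hence $K$-Lipschitz) map $\tilde f:H_1\to H_2$, and $\mathrm{graph}(\tilde f)$ is $q$-positive and $w(B,B)$-closed. The natural candidate is the indicator-type function
\begin{equation*}
\varphi(x_1,x_2):=\delta_D(x_1)+\tfrac12\bigl(K^2\|x_1\|_{H_1}^2-\|x_2\|_{H_2}^2\bigr)+\lfloor (x_1,x_2),(0,f(x_1))\rfloor\text{-type correction},
\end{equation*}
but more cleanly one takes a function built from the Lipschitz modulus, e.g.
\begin{equation*}
\varphi(x_1,x_2):=q(x_1,x_2)+\inf_{d\in D}\Bigl(\tfrac12 K^2\|x_1-d\|_{H_1}^2-\tfrac12\|x_2-f(d)\|_{H_2}^2\Bigr)
\end{equation*}
restricted appropriately; I would check directly that $\Phi_{\mathrm{graph}(f)}$ itself, or a minor modification, does the job. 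The key point is that for the $q$-positive set $A:=\mathrm{graph}(f)$ we have $\Phi_A\in\mathcal{PC}_q(B)$ precisely when $\Phi_A\ge q$ on $B$, and then $\mathcal{P}_q(\Phi_A)=G_{\Phi_A}$ and Proposition~\ref{prop6} gives $A=G_{\Phi_A}$ once $q$-representability is known; so the crux is to prove $\Phi_A(b)\ge q(b)$ for all $b$, which by Lemma~\ref{laboutd} only needs checking on $\mathrm{conv}^w A$.

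The cleanest route is via Lemma~\ref{lemma2}: take $Y:=H_1$ with its weak topology and $f_0:=P_{H_1}:B\to H_1$, the projection, which is $w(B,B)$-continuous. Condition~(2) of Lemma~\ref{lemma2} fails for this projection (because $P_{H_1}(0,x_2)=0$ but $q(0,x_2)=-\tfrac12\|x_2\|^2$ need not vanish), so Lemma~\ref{lemma2} does not apply verbatim — this is the main obstacle, and it is exactly where the strict inequality $K'<K$ must enter. Instead I would argue directly: let $b=(x_1,x_2)\in\mathrm{conv}^w A$; since $P_{H_1}(A)=D$ is closed and convex? — no, $D$ need not be convex — so I would instead use that $A=\mathrm{graph}(\tilde f)$ with $\tilde f$ $K'$-Lipschitz and estimate $\Phi_A(b)-q(b)=\sup_{a\in A}\{\lfloor b,a\rfloor-q(a)-q(b)\}\ge$ the value at a well-chosen $a=(d,f(d))$; expanding with \eqref{zero},
\begin{equation*}
\lfloor b,a\rfloor-q(a)-q(b)=-\tfrac12\bigl(K^2\|x_1-d\|_{H_1}^2-\|x_2-f(d)\|_{H_2}^2\bigr),
\end{equation*}
and I want the supremum over $d\in D$ of this to be $\ge 0$. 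Choosing $d$ to (approximately) minimize $\|x_1-d\|$ over the closed set $D$ and using $\|x_2-f(d)\|\le\|x_2-f(d_0)\|+K'\|d-d_0\|$ for a reference point, together with the fact that for $b$ in the weak closure of the graph one has $x_2$ "close to $\tilde f(x_1)$" in a sense controlled by $K$ versus $K'$, should yield nonnegativity.

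The delicate step — and the place I expect to spend the most care — is making the last estimate rigorous: passing from $b\in\mathrm{conv}^wA$ to a usable relation between $x_2$ and the values $f(d)$. Here I would use that any $b\in\mathrm{conv}^w A$ lies in $\mathrm{dom}\,\Phi_A^{@}$, and more concretely approximate $b$ by finite convex combinations $\sum t_j(d_j,f(d_j))$ in the weak topology; then $x_1$ is a weak limit of $\sum t_j d_j$ and $x_2$ of $\sum t_j f(d_j)$, and the $K'$-Lipschitz bound plus convexity of $\|\cdot\|^2$ gives $\|x_2-f(d)\|_{H_2}\le K'\,(\text{diam-type term})+\ldots$ while the projection onto the closed set lets us pick $d$ with $\|x_1-d\|$ controlled by the same term; the factor $K^2-K'^2>0$ then absorbs all error terms and delivers $\Phi_A(b)\ge q(b)$. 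Once this inequality is established, $\Phi_A\in\mathcal{PC}_q(B)$, $\Phi_A$ is $w(B,B)$-lsc, and it remains to identify $\mathcal{P}_q(\Phi_A)$ with $A$: the inclusion $A\subset\mathcal{P}_q(\Phi_A)$ is Proposition~\ref{prop0}(2), and for the reverse, if $\Phi_A(b)=q(b)$ then $b\in A^{\pi}=\mathrm{graph}(f)$ by the characterization \eqref{mu} of $A^{\pi}$ together with Proposition~\ref{pzero}(1)--(2) (any point $q$-positively related to the graph of a $K$-Lipschitz map lies on that graph once the domain forces the extension to be unique). This shows $\mathcal{P}_q(\Phi_A)=A$, so $\varphi:=\Phi_A$ is a $w(B,B)$-lsc $q$-representation of $\mathrm{graph}(f)$, proving it is $q$-representable.
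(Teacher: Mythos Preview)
Your proposal has a fundamental gap: the function $\Phi_A$ (with $A=graph(f)$) cannot serve as a $q$-representation of $A$, and the route you sketch does not repair this. The inequality $\Phi_A\ge q$ on all of $B$ would, by Lemma~\ref{102}, make $A$ premaximally $q$-positive, i.e.\ possessing a \emph{unique} maximally $q$-positive superset. But whenever $D\subsetneq H_1$ and $K'<K$, the map $f$ admits many distinct $K$-Lipschitz extensions to $H_1$, so $A$ has many maximal $q$-positive supersets and $\Phi_A\ge q$ must fail somewhere. Concretely, if $D=\{d_0\}$ and $x_1\ne d_0$, then for $b=(x_1,f(d_0))$ and the unique $a\in A$ one gets $q(b-a)=\frac{1}{2}K^2\|x_1-d_0\|^2>0$, hence $\Phi_A(b)<q(b)$. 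Your appeal to Lemma~\ref{laboutd} to restrict attention to $conv^{w}A$ does not help: membership in $\mathcal{PC}_q(B)$ requires $\Phi_A\ge q$ on all of $B$, and Lemma~\ref{laboutd} is about $\hbox{\rm dom}\,\Phi_A^{@}$, not about where $\Phi_A\ge q$ holds. Your closing identification ``$A^{\pi}=graph(f)$'' is also false for proper $D$: the set $A^{\pi}$ consists of all $(x_1,x_2)$ with $\|x_2-f(d)\|\le K\|x_1-d\|$ for every $d\in D$, which for $D=\{d_0\}$ is an entire cone, not a graph. Finally, Proposition~\ref{prop6} already assumes $q$-representability as a hypothesis, so invoking it here is circular.

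The paper's argument is completely different and bypasses $\Phi_A$. It represents $A$ as the intersection of the graphs of \emph{all} $K$-Lipschitz extensions $\tilde f:H_1\to H_2$ of $f$. Each such graph is maximally $q$-positive by Proposition~\ref{pzero}(2), hence equals $\mathcal{P}_q(\Phi_{graph(\tilde f)})$; the intersection is therefore $\mathcal{P}_q(\varphi)$ for $\varphi:=\sup_{\tilde f}\Phi_{graph(\tilde f)}$, a $w(B,B)$-lsc function in $\mathcal{PC}_q(B)$. The strict gap $K'<K$ enters not to bound $\Phi_A$ but to show that for any $x_1\notin D$ one can build two $K$-Lipschitz extensions disagreeing at $x_1$ (perturb the value at $x_1$ within the ball of radius $(K-K')\inf_{x\in D}\|x-x_1\|>0$ around a $K'$-Lipschitz extension and reapply Kirszbraun--Valentine). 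This forces the intersection of all extension graphs to project into $D$, hence to equal $graph(f)$.
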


\begin{proof}
We will prove that $graph(f)$ coincides with the intersection of all the
graphs of $K$-Lipschitz extensions $\tilde{f}$ of $f$ to the whole of $%
H_{1}. $ Since any such graph is maximally $q$-positive, we have $graph(%
\tilde{f})=\mathcal{P}_{q}\left( \Phi _{graph(\tilde{f})}\right) ;$ hence
that intersection is equal to $\mathcal{P}_{q}\left( \varphi \right) ,$
where $\varphi $ denotes the supremum of all the functions $\Phi _{graph(%
\tilde{f})};$ so the considered intersection is $q$-representable. As one
clearly has $graph(f)\subset \mathcal{P}_{q}\left( \varphi \right) ,$ we
will only prove the opposite inclusion. Let $\left( x_{1},x_{2}\right) \in 
\mathcal{P}_{q}\left( \varphi \right) .$ Then $\tilde{f}\left( x_{1}\right)
=x_{2}$ for every $\tilde{f},$ so it will suffice to prove that $x_{1}\in D.$
Assume, towards a contradiction, that $x_{1}\notin D.$ By the
Kirszbraun-Valentine extension theorem, some $\tilde{f}$ is actually $%
K^{\prime }$-Lipschitz. Take any $y\in H_{2}\setminus \left\{ x_{2}\right\} $
in the closed ball with center $x_{2}$ and radius $\left( K-K^{\prime
}\right) \inf_{x\in D}\left\Vert x-x_{1}\right\Vert _{H_{1}}.$ This number
is indeed strictly positive, since $D$ is closed. Let $f_{y}$ be the
extension of $f$ to $D\cup \left\{ x_{1}\right\} $ defined by $f_{y}\left(
x_{1}\right) =y.$ This mapping is $K$-Lipschitz, since for every $x\in D$
one has $\left\Vert f_{y}\left( x\right) -f_{y}\left( x_{1}\right)
\right\Vert _{H_{2}}=\left\Vert f\left( x\right) -y\right\Vert _{H_{2}}\leq
\left\Vert f\left( x\right) -x_{2}\right\Vert _{H_{2}}+\left\Vert
x_{2}-y\right\Vert _{H_{2}}=\left\Vert \tilde{f}\left( x\right) -\tilde{f}%
\left( x_{1}\right) \right\Vert _{H_{2}}+\left( K-K^{\prime }\right)
\left\Vert x-x_{1}\right\Vert _{H_{1}}\leq K^{\prime }\left\Vert
x-x_{1}\right\Vert _{H_{1}}+\left( K-K^{\prime }\right) \left\Vert
x-x_{1}\right\Vert _{H_{1}}=K\left\Vert x-x_{1}\right\Vert _{H_{1}}.$ Using
again the Kirszbraun-Valentine extension theorem, we get the existence of a $%
K$-Lipschitz extension $\widetilde{f_{y}}\in \mathcal{F}$ of $f_{y}.$ Since $%
\left( x_{1},x_{2}\right) \in \mathcal{P}_{q}\left( \varphi \right) \subset
graph(\widetilde{f_{y}}),$ we thus contradict $\widetilde{f_{y}}\left(
x_{1}\right) =f_{y}\left( x_{1}\right) =y.$
\end{proof}

\begin{proposition}
Let $H_{1},H_{2},$ $B$ and $\left\lfloor \cdot ,\cdot \right\rfloor $ be as
in Proposition \ref{pzero} and let $f:D\subset H_{1}\rightarrow H_{2}$ be a $%
K$-Lipschitz mapping. If $D$ is nonempty, convex, closed and bounded, then $%
graph(f)$ is $q$-representable.
\end{proposition}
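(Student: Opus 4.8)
The idea is to reduce this statement to the previously established Proposition~\ref{closed domain} by a limiting argument that shrinks the Lipschitz constant. Suppose $f:D\subset H_1\rightarrow H_2$ is $K$-Lipschitz with $D$ nonempty, convex, closed and bounded. For each $n$, I would consider the mapping $f_n:=\bigl(1-\tfrac1n\bigr)f$, which is $K'_n$-Lipschitz with $K'_n:=\bigl(1-\tfrac1n\bigr)K<K$; since $D$ is closed (and nonempty), Proposition~\ref{closed domain} applies and $graph(f_n)$ is $q$-representable. Hence each $graph(f_n)=\mathcal P_q(\varphi_n)$ for some $w(B,B)$-lsc $\varphi_n\in\mathcal{PC}_q(B)$; concretely one may take $\varphi_n=\Phi^{@}_{graph(f_n)}$ by Proposition~\ref{prop0}\textit{(4)} and Corollary~\ref{q-repr hull}. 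The plan is then to assemble a single $q$-representation of $graph(f)$ out of the $\varphi_n$ together with a control term that uses boundedness of $D$.

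First I would record the key quantitative fact: for $(x_1,x_2)\in B$ with $x_1\in D$, one has $f_n(x_1)=x_2$ iff $x_2=\bigl(1-\tfrac1n\bigr)f(x_1)$, i.e. $\|x_2-f(x_1)\|_{H_2}=\tfrac1n\|f(x_1)\|_{H_2}$, and this tends to $0$ uniformly on $D$ because $f(D)$ is bounded (as $D$ is bounded and $f$ is $K$-Lipschitz). So the graphs $graph(f_n)$ converge to $graph(f)$ in a uniform sense over the common domain $D$. The natural candidate for a $q$-representation of $graph(f)$ is $\varphi:=\sup_n \psi_n$ where $\psi_n$ is a suitably shifted/truncated version of $\varphi_n$ designed so that $\mathcal P_q(\varphi)$ picks out exactly those $(x_1,x_2)$ that are ``in the limit'' on every $graph(f_n)$; alternatively, and more cleanly, take $\varphi:=\Phi^{@}_{graph(f)}$ directly and use Proposition~\ref{prop0}\textit{(4)}, so that the whole problem becomes showing $\mathcal P_q(\Phi^{@}_{graph(f)})\subset graph(f)$.

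With that reformulation, let $(x_1,x_2)\in\mathcal P_q(\Phi^{@}_{graph(f)})$. By Lemma~\ref{laboutd}, $(x_1,x_2)\in conv^w\,graph(f)=\overline{conv}\,graph(f)$ (the SSD topology here is the product weak topology, and for convex sets weak and norm closures agree). Since $D$ is convex, closed and bounded, $graph(f)$ has bounded, closed convex hull, so $x_1\in D$; hence it remains to see $x_2=f(x_1)$. Here I would exploit that $\Phi^{@}_{graph(f)}\le \Phi^{@}_{graph(f_n)}$ is false in the wrong direction, so instead I use that $graph(f)\subset graph(\tilde f)$ for Lipschitz extensions and run the perturbation argument of Proposition~\ref{closed domain} \emph{in situ}: if $x_2\ne f(x_1)$, build a $K$-Lipschitz extension of $f$ to $D$ (already all of the relevant domain, since $x_1\in D$) — but $x_1\in D$ forces $x_2=f(x_1)$ once we know $(x_1,x_2)$ lies on the graph of \emph{every} maximally $q$-positive extension, because $graph(f)$ itself, being the graph of a $K$-Lipschitz map on a closed set, already \emph{is} $q$-representable precisely when $(x_1,x_2)\in graph(f)$. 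To avoid circularity I would instead argue: pick any $K$-Lipschitz extension $\tilde f\in\mathcal F$; then $\mathcal P_q(\Phi^{@}_{graph(f)})\subset graph(\tilde f)$ by Proposition~\ref{prop0}\textit{(3)} applied to $\Phi_{graph(\tilde f)}$, so $x_2=\tilde f(x_1)$; and since $x_1\in D$ and $\tilde f$ extends $f$, $x_2=f(x_1)$, i.e. $(x_1,x_2)\in graph(f)$.

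The main obstacle is the step $x_1\in D$: Lemma~\ref{laboutd} only gives $x_1\in P_{H_1}(conv^w\,graph(f))\subset \overline{conv}\,D$, which equals $D$ since $D$ is convex and closed — so in fact boundedness is what guarantees the convex hull of $graph(f)$ is genuinely contained in $D\times H_2$ with $P_{H_1}$ of it equal to $D$, and one should check $conv^w(graph f)\subset D\times H_2$ carefully (it does, since $graph(f)\subset D\times f(D)$ and $D\times \overline{conv}\,f(D)$ is weakly closed and convex, using boundedness of $f(D)$ so that its closed convex hull is weakly compact). Once $x_1\in D$ is secured, the inclusion into the graph of an arbitrary extension $\tilde f$ via Proposition~\ref{prop0}\textit{(3)} finishes the argument, and the reverse inclusion $graph(f)\subset\mathcal P_q(\Phi^{@}_{graph(f)})$ is immediate from Proposition~\ref{prop0}\textit{(2)}.
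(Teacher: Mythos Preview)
Your core argument (after the false start with the $f_n$, which you rightly abandon) is correct and takes a genuinely different route from the paper. You reduce to showing $\mathcal P_q(\Phi^{@}_{graph(f)})\subset graph(f)$ via Proposition~\ref{prop0}\textit{(4)}, use Lemma~\ref{laboutd} to force $x_1\in D$, and then invoke the inclusion $\mathcal P_q(\Phi^{@}_{graph(f)})\subset graph(\tilde f)$ for a single $K$-Lipschitz extension $\tilde f$ (this follows from Corollary~\ref{q-repr hull}, or from Proposition~\ref{prop0}\textit{(3)} applied to $\Phi_{graph(\tilde f)}$ together with (\ref{equation1})--(\ref{equation2})), which gives $x_2=\tilde f(x_1)=f(x_1)$. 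The paper instead proves directly that $graph(f)=\bigcap_{\tilde f}graph(\tilde f)$ by an explicit geometric construction: assuming $x_1\notin D$, it uses the Hilbert projection of $x_1$ onto $D$ and the finiteness of $C:=\sup_{x\in D}\{\|x-x_1\|+\|x-\bar x\|\}$ (which requires $D$ bounded) to build a $K$-Lipschitz extension $f_y$ with $f_y(x_1)\neq x_2$.

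What your approach buys is more than a simplification: it actually does not use boundedness at all. The containment $conv^{w}\,graph(f)\subset D\times H_2$ holds as soon as $D$ is convex and norm-closed, since then $D\times H_2=P_{H_1}^{-1}(D)$ is convex and weakly closed; your detour through $D\times\overline{conv}\,f(D)$ and weak compactness of $f(D)$ is unnecessary. Thus your argument in fact proves the stronger statement that $graph(f)$ is $q$-representable whenever $D$ is nonempty, convex and closed. The paper's hands-on extension argument genuinely needs boundedness to make $C$ finite, so your abstract route via Lemma~\ref{laboutd} and Corollary~\ref{q-repr hull} is both shorter and strictly more general.
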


\begin{proof}
As in the proof of Proposition \ref{closed domain}, it will suffice to show
that $graph(f)$ coincides with the intersection of all the graphs of $K$%
-Lipschitz extensions $\tilde{f}$ of $f$ to the whole of $H_{1},$ and we
will do it by proving that for every point $\left( x_{1},x_{2}\right) $ in
this intersection one necessarily has $x_{1}\in D$. If we had $x_{1}\notin
D, $ by the Hilbert projection theorem there would be a closest point $%
\overline{x}$ to $x_{1}$ in $D,$ characterized by the condition $%
\left\langle x-\overline{x},x_{1}-\overline{x}\right\rangle \leq 0$ for all $%
x\in D.$ Let $C:=\sup_{x\in D}\left\{ \left\Vert x-x_{1}\right\Vert
+\left\Vert x-\overline{x}\right\Vert \right\} .$ Since $x_{1}\neq \overline{%
x}$ and $D$ is nonempty and bounded, $C\in \left( 0,+\infty \right) .$ For
every $x\in D$ we have $\left\Vert x-x_{1}\right\Vert -\left\Vert x-%
\overline{x}\right\Vert =\frac{\left\Vert x-x_{1}\right\Vert ^{2}-\left\Vert
x-\overline{x}\right\Vert ^{2}}{\left\Vert x-x_{1}\right\Vert +\left\Vert x-%
\overline{x}\right\Vert }=\frac{\left\Vert x_{1}-\overline{x}\right\Vert
^{2}+2\left\langle x-\overline{x},\overline{x}-x_{1}\right\rangle }{%
\left\Vert x-x_{1}\right\Vert +\left\Vert x-\overline{x}\right\Vert }\geq 
\frac{\left\Vert x_{1}-\overline{x}\right\Vert ^{2}}{\left\Vert
x-x_{1}\right\Vert +\left\Vert x-\overline{x}\right\Vert }\geq \frac{%
\left\Vert x_{1}-\overline{x}\right\Vert ^{2}}{C}.$ Take $y\in
H_{2}\setminus \left\{ x_{2}\right\} $ in the closed ball with center $%
f\left( \overline{x}\right) $ and radius $\frac{K\left\Vert x_{1}-\overline{x%
}\right\Vert ^{2}}{C}.$ Let $f_{y}$ be the extension of $f$ to $D\cup
\left\{ x_{1}\right\} $ defined by $f_{y}\left( x_{1}\right) =y.$ This
mapping is $K$-Lipschitz, since for every $x\in D$ one has $\left\Vert
f_{y}\left( x\right) -f_{y}\left( x_{1}\right) \right\Vert
_{H_{2}}=\left\Vert f\left( x\right) -y\right\Vert _{H_{2}}\leq \left\Vert
f\left( x\right) -f\left( \overline{x}\right) \right\Vert
_{H_{2}}+\left\Vert f\left( \overline{x}\right) -y\right\Vert _{H_{2}}\leq
K\left\Vert x-\overline{x}\right\Vert _{H_{1}}+\left\Vert f\left( \overline{x%
}\right) -y\right\Vert _{H_{2}}\leq K\left\Vert x-\overline{x}\right\Vert
_{H_{1}}+\frac{K\left\Vert x_{1}-\overline{x}\right\Vert ^{2}}{C}\leq
K\left\Vert x-\overline{x}\right\Vert _{H_{1}}+K\left( \left\Vert
x-x_{1}\right\Vert -\left\Vert x-\overline{x}\right\Vert \right)
=K\left\Vert x-x_{1}\right\Vert .$ The proof finishes by applying the same
reasoning as at the end of the proof of Proposition \ref{closed domain}.
\end{proof}

\bigskip

In this framework, for $A:=graph(f)$ the function $\Phi _{A}$ is given by 
\begin{equation*}
\Phi _{A}(x_{1},x_{2})=\frac{1}{2}\sup_{a_{1}\in domf}\{-K^{2}\Vert
a_{1}-x_{1}\Vert _{H_{1}}^{2}+\Vert f(a_{1})-x_{2}\Vert _{H_{2}}^{2}\}+\frac{%
K^{2}}{2}\Vert x_{1}\Vert ^{2}-\frac{1}{2}\Vert x_{2}\Vert ^{2}.
\end{equation*}

It is also evident that $\left( B,\left\lfloor \cdot ,\cdot \right\rfloor
,\left\Vert \cdot \right\Vert \right) $ is an SSDB space if and only if $%
K=1. $

\subsection{Closed sets in a Hilbert space}

Let $\left( H,\left\langle \cdot ,\cdot \right\rangle \right) $ be a Hilbert
space.and denote by $\left\Vert \cdot \right\Vert $ the induced norm on $H.$
Clearly, $\left( H,\left\langle \cdot ,\cdot \right\rangle ,\left\Vert \cdot
\right\Vert \right) $ is an SSDB\ space, and the associated quadratic form $%
q $ is given by $q\left( x\right) =\frac{1}{2}\left\Vert x\right\Vert ^{2}.$
Since $q$ is nonnegative, every nonempty set $A\subset H$ is $q$-positive.
We further have:

\begin{proposition}
A nonempty set $A\subset H$ is $q$-representable if and only if it is closed.
\end{proposition}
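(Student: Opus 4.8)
The statement to prove is: a nonempty subset $A$ of a Hilbert space $H$ (viewed as an SSDB space with $q(x)=\tfrac12\|x\|^2$) is $q$-representable if and only if it is closed. The ``only if'' direction is essentially free: if $A$ is $q$-representable, then by definition $A=\mathcal P_q(f)$ for some $w(B,B)$-lsc $f\in\mathcal{PC}_q(B)$, and since the $w(B,B)$ topology on an SSDB space restricts to the weak topology of $H$, the set $\{x:f(x)\le q(x)\}=\{x:f(x)=q(x)\}=A$ is weakly closed (here using that $f$ is weakly lsc and $q$ is weakly continuous---indeed $q$ is convex and norm-continuous, hence weakly lsc, and $-q$ is concave and norm-continuous; more carefully, $A = \{x : f(x) - q(x) \le 0\}$ with $f-q$ weakly lsc only if $-q$ is weakly lsc, which fails, so I should instead argue directly). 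Let me reorganize: the cleanest route for ``only if'' is to note that $\Phi_A^@$ is $w(B,B)$-lsc, and by Proposition~\ref{prop0}(4), $q$-representability gives $A=\mathcal P_q(\Phi_A^@)$; then I use Lemma~\ref{laboutd}, which gives $A\subset\operatorname{dom}\Phi_A^@\subset\operatorname{conv}^wA=\overline{\operatorname{conv}A}$, together with a separate argument that $A$ is exactly $\operatorname{dom}\Phi_A^@\cap(\text{something closed})$. Actually the slick fact is: $\Phi_A^@=f^{@@}\le f$ and on the Hilbert space $\Phi_A=\Phi_A^{@@}$ only when\ldots{} --- rather than chase this, the honest ``only if'' is: a $q$-representable set is always $w(B,B)$-closed because $\mathcal P_q(g)$ for weakly-lsc $g\ge q$ equals $\{x: g(x)+g^@(\text{stuff})\ldots\}$; the simplest correct statement, already used implicitly in the Lipschitz subsection (``every $q$-representable set is closed''), is that $\mathcal P_q(g)=\{x: g(x)\le q(x)\}$ and one writes $q(x)=\tfrac12\langle x,x\rangle = \langle x, y\rangle - g_0^*(y)$ evaluated cleverly; I will simply invoke the already-stated principle that $q$-representable $\Rightarrow$ $w(B,B)$-closed $=$ norm-closed (for convex sets; but $A$ need not be convex). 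So for ``only if'' I will argue: $A=\mathcal P_q(f)$, $f$ weakly lsc; suppose $x_n\to x$ in norm with $x_n\in A$. Then $f(x_n)=q(x_n)\to q(x)$ by norm-continuity of $q$, and $f(x)\le\liminf f(x_n)=q(x)$ by lsc, while $f(x)\ge q(x)$ since $f\in\mathcal{PC}_q$; hence $f(x)=q(x)$, i.e. $x\in A$. So $A$ is norm-closed. That is clean and uses nothing fancy.

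\textbf{The ``if'' direction.} This is the substantive part. Suppose $A$ is a nonempty closed subset of $H$; I must produce a weakly-lsc $f\in\mathcal{PC}_q(H)$ with $\mathcal P_q(f)=A$. The natural candidate is built from the distance function: for $x\in H$, set $d_A(x):=\inf_{a\in A}\|x-a\|$ and define
\begin{equation*}
f(x):=\tfrac12\|x\|^2 + \tfrac12\,d_A(x)^2 = q(x)+\tfrac12 d_A(x)^2.
\end{equation*}
Then trivially $f\ge q$ everywhere, with equality exactly when $d_A(x)=0$, i.e. (since $A$ is closed) exactly when $x\in A$; so $\mathcal P_q(f)=A$ and $f$ is proper. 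It remains to check two things: (i) $f$ is convex, and (ii) $f$ is $w(B,B)$-lsc, equivalently lsc for the weak topology of $H$. For (ii): $d_A$ is $1$-Lipschitz hence norm-continuous, so $f$ is norm-continuous; a norm-continuous \emph{convex} function on a Banach space is weakly lsc, so (ii) follows once (i) is established.

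\textbf{The main obstacle: convexity of $f$.} The function $x\mapsto\tfrac12 d_A(x)^2$ is \emph{not} convex for a general closed set $A$ (e.g. $A=\{0,1\}\subset\mathbb R$), so adding the bare $q(x)=\tfrac12\|x\|^2$ must be what rescues convexity. The key identity is the standard ``Asplund'' computation: for all $x$,
\begin{equation*}
\tfrac12\|x\|^2 + \tfrac12 d_A(x)^2 = \sup_{a\in A}\Bigl\{\langle x,a\rangle - \tfrac12\|a\|^2\Bigr\} + \text{(correction)};
\end{equation*}
more precisely, expand $d_A(x)^2=\inf_{a\in A}(\|x\|^2-2\langle x,a\rangle+\|a\|^2)$, so
\begin{equation*}
f(x)=\tfrac12\|x\|^2+\tfrac12\inf_{a\in A}\bigl(\|x\|^2-2\langle x,a\rangle+\|a\|^2\bigr)=\|x\|^2-\sup_{a\in A}\bigl(\langle x,a\rangle-\tfrac12\|a\|^2\bigr).
\end{equation*}
That still has the nonconvex $\|x\|^2$ minus a convex sup, so this particular $f$ is the wrong guess. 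The \emph{right} guess, matching the pattern $\Phi_A(x)=\sup_{a\in A}\{\langle x,a\rangle-q(a)\}$, is to take $f:=\Phi_A^@$ itself, or equivalently to recognize via Proposition~\ref{prop0}(4) that $A$ is $q$-representable iff $\mathcal P_q(\Phi_A^@)\subset A$. So the plan for ``if'' becomes: compute $\Phi_A$ and $\Phi_A^@$ explicitly. Here $\Phi_A(x)=\sup_{a\in A}\{\langle x,a\rangle-\tfrac12\|a\|^2\} = \tfrac12\|x\|^2-\tfrac12 d_A(x)^2$, a norm-continuous (hence weakly-lsc) convex function --- note it is convex because it is a sup of affine functions. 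Then $\Phi_A^@=\Phi_A^*\circ i=\Phi_A^*$ (identifying $H$ with $H^*$), and I must show $\mathcal P_q(\Phi_A^@)\subset A$: if $\Phi_A^@(x)=q(x)$ then by the Fenchel--Young equality $x\in\partial\Phi_A^@(x)$ wait --- rather, $\Phi_A^@(x)=q(x)$ together with $\Phi_A(x)\le\Phi_A^@(x)$ (Prop~\ref{prop0}(1)) forces, via $2q(x)\le\Phi_A(x)+\Phi_A^@(x)\le 2q(x)$, that $\Phi_A(x)=q(x)$, i.e. $d_A(x)=0$, i.e. $x\in A$ since $A$ is closed. \emph{That's it.} So the real structure is: (1) $\Phi_A(x)=q(x)-\tfrac12 d_A(x)^2$, manifestly convex and norm-continuous; (2) hence $\Phi_A\in\mathcal{PC}_q$? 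No --- $\Phi_A\le q$, not $\ge$. The correct finish is exactly the Prop~\ref{prop0}(1) squeeze above applied to $\Phi_A^@$: $\mathcal P_q(\Phi_A^@)=\{x:\Phi_A(x)=q(x)\}=\{x:d_A(x)=0\}=\overline A=A$, and since $\Phi_A^@$ is $w(B,B)$-lsc and in $\mathcal{PC}_q(B)$ by Prop~\ref{prop0}(1)--(2), it is the desired $q$-representation. I expect the only genuine content is the one-line computation $\Phi_A(x)=q(x)-\tfrac12 d_A(x)^2$ and the observation that closedness of $A$ is precisely what makes $\{d_A=0\}=A$; everything else is quoting Proposition~\ref{prop0}.
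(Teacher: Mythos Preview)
Your final argument is correct. After the exploratory detours, what you actually use is: (i) the explicit formula $\Phi_A(x)=q(x)-\tfrac12 d_A(x)^2$; (ii) Proposition~\ref{prop0}(1)--(2) to know that $\Phi_A^@$ is $w(B,B)$-lsc, proper, and $\ge q$ with equality on $A$; and (iii) the Fenchel--Young squeeze $2q(x)\le \Phi_A(x)+\Phi_A^@(x)$ together with $\Phi_A\le\Phi_A^@$ to conclude that $\Phi_A^@(x)=q(x)$ forces $\Phi_A(x)=q(x)$, i.e.\ $d_A(x)=0$, i.e.\ $x\in A$ by closedness. By Proposition~\ref{prop0}(4) this is exactly what is needed. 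Your ``only if'' via norm-sequential closedness is also fine (weak lsc implies norm lsc, and $q$ is norm-continuous).

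It is worth observing that your route and the paper's are the \emph{same} representation in different clothing. The paper defines
\[
h(x)=\sup_{y\in H}\Bigl\{q(y)+\langle y,x-y\rangle+\tfrac12 d_A^2(y)\Bigr\}
=\sup_{y\in H}\Bigl\{\langle y,x\rangle-\bigl(q(y)-\tfrac12 d_A^2(y)\bigr)\Bigr\}
=\sup_{y\in H}\bigl\{\langle y,x\rangle-\Phi_A(y)\bigr\}=\Phi_A^@(x),
\]
and then verifies the properties of $h$ by direct computation, without invoking Proposition~\ref{prop0}. Your version is more economical because it recognises $h$ as $\Phi_A^@$ and lets the general machinery (Proposition~\ref{prop0}(1),(2),(4)) do the bookkeeping; the paper's version is self-contained and makes the inequalities visible line by line. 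Either way, the single substantive step is that closedness of $A$ is precisely the condition $\{d_A=0\}=A$.
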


\begin{proof}
The "only if" statement being obvious, we will only prove the converse.
Define $h:H\rightarrow \mathbb{R\cup }\{+\infty \}$ by%
\begin{equation*}
h\left( x\right) =\sup_{y\in H}\left\{ q\left( y\right) +\left\langle
y,x-y\right\rangle +\frac{1}{2}d_{A}^{2}\left( y\right) \right\} ,
\end{equation*}%
with $d_{A}\left( y\right) :=\inf_{a\in A}\left\Vert y-a\right\Vert .$
Clearly, $h$ is convex and lsc$.$

For all $x\in H,$%
\begin{equation*}
h\left( x\right) \geq q\left( x\right) +\left\langle x,x-x\right\rangle +%
\frac{1}{2}d_{A}^{2}\left( x\right) =q\left( x\right) +\frac{1}{2}%
d_{A}^{2}\left( x\right) \geq q\left( x\right) ,
\end{equation*}%
which implies that $h\geq q$ and $\mathcal{P}_{q}(h)\subset A.$ We will
prove that $h$ represents $A,$ that is,%
\begin{equation}
A=\mathcal{P}_{q}(h).  \label{closed representable}
\end{equation}%
To prove the inclusion $\subset $ in (\ref{closed representable}), let $x\in
A.$ Then, for all $y\in H,$%
\begin{eqnarray*}
q\left( y\right) +\left\langle y,x-y\right\rangle +\frac{1}{2}%
d_{A}^{2}\left( y\right) &\leq &\frac{1}{2}\left\Vert y\right\Vert
^{2}+\left\langle y,x-y\right\rangle +\frac{1}{2}\left\Vert y-x\right\Vert
^{2}=\frac{1}{2}\left\Vert x\right\Vert ^{2} \\
&=&q\left( x\right) ,
\end{eqnarray*}%
which proves that $h\left( x\right) \leq q\left( x\right) .$ Hence, as $%
h\geq q,$ the inclusion $\subset $ holds in (\ref{closed representable}). We
have thus proved (\ref{closed representable}), which shows that $A$ is $q$%
-representable.
\end{proof}

\begin{proposition}
\label{prop7} Let $\emptyset \neq A\subset H$. Then

(1) $\Phi _{A}(x)=\frac{1}{2}\Vert x\Vert ^{2}-\frac{1}{2}d_{A}^{2}(x)$;

(2) $\Phi _{A}^{@}(x)=\frac{1}{2}\Vert x\Vert ^{2}+\frac{1}{2}\sup_{b\in
H}\{d_{A}^{2}(b)-\Vert x-b\Vert ^{2}\}$;

(3) $\Phi _{A}^{@}(x)=\frac{1}{2}\Vert x\Vert ^{2}\Leftrightarrow x\in 
\overline{A}$;

(4) $G_{\Phi _{A}}=\left\{ x\in H:\sup_{b\in H}\{d_{A}^{2}(b)-\Vert b-x\Vert
^{2}\}=d_{A}^{2}(x)\right\} $
\end{proposition}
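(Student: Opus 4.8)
The plan is to obtain all four formulas by unwinding the definitions of $\Phi_A$, of the intrinsic conjugate $(\cdot)^@$, and of $G_{(\cdot)}$ in the concrete setting at hand, where $\lfloor x,y\rfloor=\langle x,y\rangle$ and $q(x)=\tfrac12\|x\|^2$. For \emph{(1)}, I would substitute $q(x)=\tfrac12\|x\|^2$ into $\Phi_A(x)=q(x)-\inf_{a\in A}q(x-a)$ and use $\inf_{a\in A}\tfrac12\|x-a\|^2=\tfrac12 d_A^2(x)$. For \emph{(2)}, I would start from $\Phi_A^@(x)=\sup_{c\in H}\{\langle c,x\rangle-\Phi_A(c)\}$, plug in the formula just proved in \emph{(1)}, and use the identity $\langle c,x\rangle-\tfrac12\|c\|^2=\tfrac12\|x\|^2-\tfrac12\|x-c\|^2$ to factor $\tfrac12\|x\|^2$ out of the supremum; relabelling $c$ as $b$ yields the stated expression.

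For \emph{(3)} I see two routes. The short one invokes Corollary \ref{q-repr hull}: $\mathcal{P}_q(\Phi_A^@)$ is the smallest $q$-representable superset of $A$, and since a nonempty subset of $H$ is $q$-representable precisely when it is closed (by the Proposition proved just above), this smallest superset is $\overline A$; as $\mathcal{P}_q(\Phi_A^@)=\{x:\Phi_A^@(x)=q(x)\}$ is exactly the set on the left of \emph{(3)}, the equivalence follows. The direct route argues from \emph{(2)}: evaluating $d_A^2(b)-\|x-b\|^2$ at $b=x$ gives $d_A^2(x)\ge 0$, so the supremum in \emph{(2)} is always $\ge 0$, and it equals $0$ if and only if $d_A(x)=0$ and $d_A(b)\le\|x-b\|$ for every $b$; the second condition is automatic once $x\in\overline A$, because $d_A$ is $1$-Lipschitz (equivalently, $d_A(b)\le\|b-a_n\|\to\|b-x\|$ along any $a_n\in A$ with $a_n\to x$). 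Hence $\Phi_A^@(x)=\tfrac12\|x\|^2\iff d_A(x)=0\iff x\in\overline A$.

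For \emph{(4)}, I would use that $b\in G_{\Phi_A}$ means $b\in\partial_q\Phi_A(b)$, i.e. $\Phi_A(b)+\Phi_A^@(b)=\lfloor b,b\rfloor=\|b\|^2$, then substitute the formulas from \emph{(1)} and \emph{(2)}; the two copies of $\tfrac12\|x\|^2$ combine to $\|x\|^2$ and cancel, leaving exactly $\sup_{b\in H}\{d_A^2(b)-\|b-x\|^2\}=d_A^2(x)$.

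Every step is a one-line algebraic manipulation, so I do not expect a real obstacle. The only place that needs a moment's care is the equivalence in \emph{(3)}: one must simultaneously observe that the supremum in \emph{(2)} is nonnegative (which forces $d_A(x)=0$) and that membership of $x$ in $\overline A$ makes every term $d_A^2(b)-\|x-b\|^2$ nonpositive — it is here that the $1$-Lipschitz property of $d_A$, or equivalently Corollary \ref{q-repr hull} together with the identification of the $q$-representable subsets of $H$ with the closed ones, does the work.
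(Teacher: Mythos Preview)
The paper states Proposition~\ref{prop7} without proof, so there is nothing to compare your argument against. Your derivations are correct: parts \textit{(1)}, \textit{(2)} and \textit{(4)} are the expected one-line substitutions, and both of your routes for \textit{(3)} are valid --- the direct one via the $1$-Lipschitz property of $d_A$, and the short one via Corollary~\ref{q-repr hull} combined with the preceding proposition identifying the $q$-representable subsets of $H$ with the nonempty closed sets. This is evidently why the authors left the proof to the reader.
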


\begin{theorem}
\label{weir} Let $\emptyset \neq A\subset H$ be such that $A=G_{\Phi _{A}}$,
and let $a_{1}$, $a_{2}\in A$ be two different points, $x=\frac{1}{2}%
(a_{1}+a_{2})$ and $r=\frac{1}{2}\Vert a_{1}-a_{2}\Vert $. Denote by $%
B_{r}(x)$ the open ball with center $x$ and radius $r.$ Then, 
\begin{equation*}
B_{r}(x)\cap A\neq \emptyset .
\end{equation*}
\end{theorem}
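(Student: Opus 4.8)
The plan is to argue by contradiction, using the description of $G_{\Phi _{A}}$ furnished by Proposition \ref{prop7}(4): a point $y\in H$ lies in $G_{\Phi _{A}}$ exactly when $\sup_{b\in H}\{d_{A}^{2}(b)-\|b-y\|^{2}\}=d_{A}^{2}(y)$. Taking $b=y$ in this supremum shows that $\sup_{b\in H}\{d_{A}^{2}(b)-\|b-y\|^{2}\}\ge d_{A}^{2}(y)$ always, so to verify $y\in G_{\Phi _{A}}$ it suffices to establish the reverse inequality. I would assume $B_{r}(x)\cap A=\emptyset$, hence $d_{A}(x)\ge r$, and show this forces $x\in G_{\Phi _{A}}=A$, which is absurd since then $d_{A}(x)=0$ while $r=\frac{1}{2}\|a_{1}-a_{2}\|>0$.

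The one genuine computation is a midpoint (parallelogram) estimate. Since $x=\frac{1}{2}(a_{1}+a_{2})$, for every $b\in H$ one has $\|a_{1}-b\|^{2}+\|a_{2}-b\|^{2}=2\|x-b\|^{2}+\frac{1}{2}\|a_{1}-a_{2}\|^{2}=2\|x-b\|^{2}+2r^{2}$. Because $a_{1},a_{2}\in A$, this gives
\begin{equation*}
d_{A}^{2}(b)\le \min \{\|a_{1}-b\|^{2},\|a_{2}-b\|^{2}\}\le \tfrac{1}{2}\bigl(\|a_{1}-b\|^{2}+\|a_{2}-b\|^{2}\bigr)=\|x-b\|^{2}+r^{2},
\end{equation*}
so that $d_{A}^{2}(b)-\|x-b\|^{2}\le r^{2}$ for all $b\in H$, i.e. $\sup_{b\in H}\{d_{A}^{2}(b)-\|x-b\|^{2}\}\le r^{2}$.

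Now I would combine the ingredients. Since $a_{1}\in A$ and $\|a_{1}-x\|=r$ we always have $d_{A}(x)\le r$; under the contradiction hypothesis $d_{A}(x)\ge r$, hence $d_{A}(x)=r$. The estimate above then reads $\sup_{b\in H}\{d_{A}^{2}(b)-\|x-b\|^{2}\}\le r^{2}=d_{A}^{2}(x)$, and with the trivial reverse inequality this is an equality, so $x\in G_{\Phi _{A}}$. By hypothesis $G_{\Phi _{A}}=A$, whence $x\in A$ and $d_{A}(x)=0$, contradicting $d_{A}(x)=r>0$. Therefore $d_{A}(x)<r$, and any $a\in A$ with $\|a-x\|<r$ witnesses $B_{r}(x)\cap A\ne \emptyset$.

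I do not anticipate a real obstacle; the only subtlety is that the ball in the statement is open, so the argument must terminate with the strict inequality $d_{A}(x)<r$ — which is precisely what excluding the case $d_{A}(x)=r$ yields, the case $d_{A}(x)>r$ being impossible from the start. Note that closedness of $A$ is not needed here, since $x\in A$ already forces $d_{A}(x)=0$ directly.
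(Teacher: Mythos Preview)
Your proof is correct and follows essentially the same approach as the paper: assume $B_r(x)\cap A=\emptyset$, verify via Proposition~\ref{prop7}(4) that $\sup_{b}\{d_A^2(b)-\|b-x\|^2\}=d_A^2(x)=r^2$, conclude $x\in G_{\Phi_A}=A$, and derive a contradiction. The only difference is in the core estimate $d_A^2(b)-\|b-x\|^2\le r^2$: the paper obtains it by a half-space dichotomy (either $\langle b-x,x-a_1\rangle\le 0$ or $\langle b-x,x-a_2\rangle\le 0$, since $x-a_1=-(x-a_2)$), whereas you use the parallelogram identity together with $\min\le\text{average}$ --- a minor, slightly cleaner variant that avoids the case split.
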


\begin{proof}
Suppose that 
\begin{equation}
A\cap B_{r}(x)=\emptyset ,  \label{pr1}
\end{equation}%
so, we must have $d_{A}^{2}(x)=\Vert x-a_{1}\Vert ^{2}=\Vert x-a_{2}\Vert
^{2}$.\newline
For $b\in H$, we have%
\begin{equation*}
\text{either }\langle b-x,x-a_{1}\rangle \leq 0\;\text{or}\;\langle
b-x,x-a_{2}\rangle \leq 0.
\end{equation*}%
If $\langle b-x,x-a_{1}\rangle \leq 0$, 
\begin{equation*}
d_{A}^{2}(b)-\Vert b-x\Vert ^{2}\leq \Vert b-a_{1}\Vert ^{2}-\Vert b-x\Vert
^{2}\leq \Vert x-a_{1}\Vert ^{2}=d_{A}^{2}(x).
\end{equation*}%
If $\langle b-x,x-a_{2}\rangle \leq 0$, 
\begin{equation*}
d_{A}^{2}(b)-\Vert b-x\Vert ^{2}\leq \Vert b-a_{2}\Vert ^{2}-\Vert b-x\Vert
^{2}\leq \Vert x-a_{2}\Vert ^{2}=d_{A}^{2}(x).
\end{equation*}%
Thus, we deduce that 
\begin{equation*}
\sup_{b\in H}\{d_{A}^{2}(b)-\Vert b-x\Vert ^{2}\}=d_{A}^{2}(x),
\end{equation*}%
hence by Proposition \ref{prop7}\textit{(4)} $x\in G_{\Phi _{A}}=A$, which
is a contradiction with (\ref{pr1}).
\end{proof}

\begin{corollary}
Let $H=\mathbb{R}$ and $\emptyset \neq A\subset \mathbb{R}$. Then, 
\begin{equation*}
A=G_{\Phi _{A}}\text{ if and only if }A\ \text{is closed and convex.}
\end{equation*}
\end{corollary}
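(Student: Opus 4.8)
I would prove the equivalence by treating the two implications in turn; in both directions the work is done by results already established above, so the plan is essentially to assemble Theorem \ref{theorem1}(1), Proposition \ref{prop7}, and Theorem \ref{weir}. Throughout one uses that $H=\mathbb{R}$ with $q(x)=\tfrac{1}{2}\|x\|^{2}$ is an SSDB space; in particular $q\ge 0$, so every nonempty subset $A$ of $H$ is $q$-positive and satisfies $A^{\pi}=H$, and for a convex $A$ one has $conv^{w}A=A^{w}=\overline{A}$.

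\emph{Sufficiency.} Assuming $A$ is nonempty, closed and convex, one has $conv^{w}A=\overline{A}=A$, and Theorem \ref{theorem1}(1) gives
\[
A\subset\mathcal{P}_{q}(\Phi_{A}^{@})\subset G_{\Phi_{A}}\subset A^{\pi}\cap conv^{w}A=H\cap A=A,
\]
so every inclusion is an equality; in particular $A=G_{\Phi_{A}}$. If one wants an explicit check of the only inclusion that uses convexity, $A\subset G_{\Phi_{A}}$ is also immediate from Proposition \ref{prop7}(4): for $x\in A$ one has $d_{A}(x)=0$ and $d_{A}^{2}(b)-\|b-x\|^{2}\le 0$ for every $b\in H$, so the supremum in Proposition \ref{prop7}(4) equals $0=d_{A}^{2}(x)$.

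\emph{Necessity.} Assuming $A=G_{\Phi_{A}}$, I would first get closedness by combining Proposition \ref{prop7}(3), which says $\mathcal{P}_{q}(\Phi_{A}^{@})=\overline{A}$, with the inclusion $\mathcal{P}_{q}(\Phi_{A}^{@})\subset G_{\Phi_{A}}$ from Theorem \ref{theorem1}(1): this yields $\overline{A}\subset G_{\Phi_{A}}=A$. For convexity I would argue by contradiction. A closed, non-convex subset of $\mathbb{R}$ is not an interval, so there exist $a_{1}<c<a_{2}$ with $a_{1},a_{2}\in A$ and $c\notin A$. Put $u:=\sup\{a\in A:a\le c\}$ and $v:=\inf\{a\in A:a\ge c\}$; the closedness of $A$ and $c\notin A$ give $u,v\in A$, $u<c<v$, and $(u,v)\cap A=\emptyset$. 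Applying Theorem \ref{weir} to the two distinct points $u,v\in A=G_{\Phi_{A}}$, with $x=\tfrac{1}{2}(u+v)$ and $r=\tfrac{1}{2}\|u-v\|$, and observing that in $\mathbb{R}$ the open ball $B_{r}(x)$ is exactly the open interval $(u,v)$, one concludes $(u,v)\cap A\neq\emptyset$, a contradiction. Hence $A$ is convex.

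The only step requiring any care is this last construction of an interval $(u,v)$ disjoint from $A$ whose endpoints nevertheless lie in $A$; everything else is a direct invocation of the quoted results. In a sense Theorem \ref{weir} already isolates the one nontrivial geometric fact — that $G_{\Phi_{A}}$ cannot skip over the region between two of its points — so this final part of the argument is mostly bookkeeping, and I expect no genuine obstacle beyond it.
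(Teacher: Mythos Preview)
Your proof is correct and follows essentially the same route as the paper's. For sufficiency you invoke Theorem \ref{theorem1}(1) with $conv^{w}A=A$, which is precisely how the paper derives Theorem \ref{theorem1}(2); for necessity you use Theorem \ref{weir} exactly as the paper does, but you supply the details the paper omits --- namely, a concrete argument for closedness (via Proposition \ref{prop7}(3) rather than the implicit fact that $G_{\Phi_A}$ is always closed) and the sup/inf construction guaranteeing a gap interval $(u,v)$ with endpoints in $A$, which the paper simply asserts exists. One minor wording slip: in your aside after the sufficiency argument, the inclusion $A\subset G_{\Phi_A}$ holds for every $q$-positive set and does not use convexity; the place convexity enters is in collapsing $conv^{w}A$ to $A$.
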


\begin{proof}
($\Longrightarrow $) Since $A=G_{\Phi _{A}}$, $A$ is closed. Assume that $A$
is not convex, so there exists $a_{1},a_{2}\in A$ such that $]a_{1},a_{2}[$ $%
\cap A=\emptyset $, hence 
\begin{equation*}
A\cap B_{r}(x)=\emptyset ,\hbox{ with }x=\frac{1}{2}(a_{1}+a_{2})\hbox{ and }%
r=\frac{1}{2}|a_{1}-a_{2}|,
\end{equation*}%
which contradicts Theorem \ref{weir}. Thus $A$ is convex.

($\Longleftarrow $) Since $A$ is closed, it is $q$-positive;\ hence we can
apply Theorem \ref{theorem1}\textit{(2)}.
\end{proof}

\bigskip

We will show with a simple example that, leaving aside the case $B=\mathbb{R}
$, in general $A=G_{\Phi _{A}}$ does not imply that $A$ is convex.

\begin{example}
Let $H=\mathbb{R}^{2},$ and let $A=\left\{ \left( x_{1},x_{2}\right) \in 
\mathbb{R}^{2}:x_{1}x_{2}=0\right\} .$ We will show that $A=G_{\Phi _{A}}.$
Let $x=\left( x_{1},x_{2}\right) \in \mathbb{R}^{2}\setminus A.$ Then%
\begin{equation*}
d_{A}\left( x\right) =\min \left\{ \left\vert x_{1}\right\vert ,\left\vert
x_{2}\right\vert \right\} .
\end{equation*}%
If $\lambda \in \mathbb{R}$, let $f\left( \lambda \right) :=d_{A}^{2}\left(
\lambda x\right) -\left\Vert \lambda x-x\right\Vert ^{2}=\lambda
^{2}d_{A}^{2}\left( x\right) -\left( \lambda -1\right) ^{2}\left\Vert
x\right\Vert ^{2}.$ Then $f^{\prime }\left( 1\right) =2d_{A}^{2}\left(
x\right) >0$ and so, if $\lambda $ is slightly greater than $1,$ $f\left(
\lambda \right) >f\left( 1\right) ,$ that is to say, $d_{A}^{2}\left(
\lambda x\right) -\left\Vert \lambda x-x\right\Vert ^{2}>d_{A}^{2}\left(
x\right) .$ Hence we have%
\begin{equation*}
\sup_{y\in H}\left\{ d_{A}^{2}\left( y\right) -\left\Vert y-x\right\Vert
^{2}\right\} >d_{A}^{2}\left( x\right) ;
\end{equation*}%
thus, by Proposition \ref{prop7}\textit{(4)}, $x\notin G_{\Phi _{A}}.$ We
deduce that $A=G_{\Phi _{A}}$, and clearly $A$ is not convex.
\end{example}


\begin{thebibliography}{99}
\bibitem{BWY08} H.~H. Bauschke, X. Wang, L. Yao.~\newblock Monotone Linear
Relations: Maximality and Fitzpatrick Functions. 
\newblock {\em J. Convex
Anal.}, 16(3-4):673--686, 2009.

\bibitem{BWY08b} H.~H. Bauschke, X. Wang, L. Yao.~\newblock Autoconjugate
representers for linear monotone operators. \newblock\emph{Math. Program.,
Ser. B}, 123(1):15--24, 2010.

\bibitem{Fi88} S.~Fitzpatrick.~\newblock Representing monotone operators by
convex functions, \newblock In \emph{Workshop/Miniconference on Functional
Analysis and Optimization, Canberra 1988, Proceedings of the Centre for
Mathematical Analysis, vol. 20}, pp. 59--65, Australian National University,
Canberra, 1988.

\bibitem{K34} M.~D. Kirszbraun.~\newblock Uber die zusammenziehende und
Lipschitzsche Transformationen. \newblock\emph{Fund. Math.}, 22:77--108,
1934.

\bibitem{MAS09} M. Marques Alves, B.~F. Svaiter. \newblock Maximal monotone
operators with a unique extension to the bidual. 
\newblock {\em J. Convex
Anal.}, 16(2): 409--421, 2009.

\bibitem{JE08} J.~E. Mart{\'{\i}}nez-Legaz.~\newblock On maximally $q$%
-positive sets. \newblock {\em J. Convex Anal.}, 16(3-4):891--898, 2009.

\bibitem{MS08} J.~E. Mart{\'{\i}}nez-Legaz, B.~F. Svaiter.~\newblock Minimal
convex functions bounded below by the duality product. \newblock\emph{Proc.
Amer. Math. Soc.}, 136(3):873--878, 2008.

\bibitem{301} S.~Simons.~\newblock The range of a monotone operator. 
\newblock {\em J.
Math. Anal. Appl.}, 199(1):176--201, (1996).

\bibitem{Si07} S.~Simons.~\newblock Positive sets and monotone sets. %
\newblock {\em J. Convex Anal.}, 14(2):297--317, 2007.

\bibitem{Si08} S.~Simons.~%
\newblock {\em From {H}ahn-{B}anach to
monotonicity}, volume 1693 of \emph{\ Lecture Notes in Mathematics}. %
\newblock Springer, New York, 2008.

\bibitem{Si11} S.~Simons.~\newblock Banach SSD\ spaces and classes of
monotone sets. \newblock {\em J. Convex Anal.}, 18(1):227--258, 2011.

\bibitem{V45} F.~A. Valentine.~A Lipschitz condition preserving extension
for a vector function. \newblock\emph{Amer. J. Math.}, 67(1):83--93, 1945.
\end{thebibliography}
\end{document}